\newcommand\testStep[1]{%
\ifcase#1 
I don't know what to do with this!
\or
First step
\or
Second step
\or
Third step
\or
Fourth step
\or
Fifth step
\or
Sixth step
\or
Seventh step
\or
Eighth step
\or
Ninth step
\or
Tenth step
\else
I don't know what to do with this!
\fi}
\newcounter{step}
\newcommand{\mysubsection}[1]{\subsection{\testStep{\thestep}: #1}\addtocounter{step}{1}}
\def\d{{\normalfont{\textrm{d}}}}
\def\zero{{\bf{z}}}
\def\RP{{\textnormal{(R)}}}
\def\Var{{\textnormal{Var}}}
\def\E{{\mathbb{E}}}
\def\Ebf{{\mathbf{E}}}
\def\N{{\mathbb{N}}}
\def\P{{\mathbb{P}}}
\def\Pbf{{\mathbf{P}}}
\def\R{{\mathbb{R}}}
\def\T{{\mathbb{T}}}
\def\stree{{\normalfont{\texttt{t}}}}
\def\tree{{\normalfont{\texttt{T}}}}
\def\Tree{{\mathcal{T}}}
\def\Ccal{{\mathcal{C}}}
\def\Dcal{{\mathcal{D}}}
\def\Ecal{{\mathcal{E}}}
\def\Fcal{{\mathcal{F}}}
\def\Gcal{{\mathcal{G}}}
\def\Hcal{{\mathcal{H}}}
\def\Kcal{{\mathcal{K}}}
\def\Lcal{{\mathcal{L}}}
\def\Mcal{{\mathcal{M}}}
\def\Ncal{{\mathcal{N}}}
\def\Scal{{\mathcal{S}}}
\def\Tcal{{\mathcal{T}}}
\newcommand{\indicator}[1]{{\mathbbm{1}}_{\{#1\}}}
\newtheorem{theorem}{Theorem}[section]
\newtheorem{lemma}[theorem]{Lemma}
\newtheorem{corollary}[theorem]{Corollary}
\newtheorem{rk}[theorem]{Remark}
\begin{document}

\begin{frontmatter}

\title{Scaling limit of a limit order book model via the regenerative characterization of L\'evy trees}
\runtitle{Scaling limit of a limit order book model}


\author{\fnms{Peter} \snm{Lakner}\ead[label=e1]{plakner@stern.nyu.edu}}
\address{Leonard N. Stern School of Business\\New York University\\44 W 4th St\\New York, NY, 10012\\USA}
\and
\author{\fnms{Josh} \snm{Reed}\ead[label=e2]{jreed@stern.nyu.edu}}
\address{Leonard N. Stern School of Business\\New York University\\44 W 4th St\\New York, NY, 10012\\USA}
\and
\author{\fnms{Florian} \snm{Simatos}\ead[label=e3]{florian.simatos@isae-supaero.fr}}
\address{ISAE-SUPAERO\\10 avenue Edouard Belin\\31055 Toulouse\\France}

\runauthor{P.\ Lakner, J.\ Reed and F.\ Simatos}

\begin{abstract}
	We consider the following Markovian dynamic on point processes: at constant rate and with equal probability, either the rightmost atom of the current configuration is removed, or a new atom is added at a random distance from the rightmost atom. Interpreting atoms as limit buy orders, this process was introduced by~\cite{Lakner16:0} to model a one-sided limit order book.
	
	We consider this model in the regime where the total number of orders converges to a reflected Brownian motion, and complement the results of~\cite{Lakner16:0} by showing that, in the case where the mean displacement at which a new order is added is positive, the measure-valued process describing the whole limit order book converges to a simple functional of this reflected Brownian motion. Our results make it possible to derive useful and explicit approximations on various quantities of interest such as the depth or the total value of the book.
	
	Our approach leverages an unexpected connection with L\'evy trees. More precisely, the cornerstone of our approach is the regenerative characterization of L\'evy trees due to~\cite{Weill07:0}, which provides an elegant proof strategy which we unfold.
\end{abstract}



\end{frontmatter}

\section{Introduction}

\subsection{Context}

The limit order book is a financial trading mechanism that facilitates the buying and selling of securities by market participants. It keeps track of orders made by traders, which makes it possible to fulfill them in the future. For instance, a trader may place an order to buy a security at a certain level $p$. If the price of the security $\pi$ is larger than $p$ when the order is placed, then the order is kept in the book and will be fulfilled if the price of the security falls below $p$.

Due to its growing importance in modern electronic financial markets, the limit order book has attracted a significant amount of attention in the applied probability literature recently. One may consult, for instance, the survey paper by~\cite{Gould13:0} for a list of references. Several mathematical models of the limit order book have been proposed in recent years, ranging from stylized models such as the Stigler-Luckock model (see~\cite{Kelly:0, Luckock03:0, Swart:0}) to more complex models such as those proposed by~\cite{Cont10:0} or~\cite{Gareche13:0}. Broadly speaking, these models may be categorized as being either discrete and closely adhering to the inherent quantized nature of the limit order book, or as being continuous in order to better capture the high frequency regime in which the order book typically evolves.

In the present paper, we attempt to bridge the gap between the discrete and continuous points of view by establishing the weak convergence of a discrete limit order book model to a continuous one in an appropriately defined high frequency regime where the speed at which orders arrive grows large. Similar weak convergence results have recently been considered in various works. However, most of the time, only finite-dimensional statistics of the limit order book are tracked such as the bid and ask prices (the highest prices associated with a buy and sell order on the book) or the spread (equal to the difference between these two quantities), see for instance~\cite{Abergel13:0, Blanchet:0, Cont:0}; \cite{Cont13:0, Kirilenko13:0}. In contrast, in the present paper we establish the convergence of the full limit order book which we model by a measure-valued process. This approach has also been taken in~\cite{Osterrieder07:0}. In~\cite{Horst17:0} the authors also model the entire book but with a different approach, namely, they track the density of orders which they see as random elements of an appropriate Banach space.

\subsection{Multiplicative model description and main result} \label{sub:model-description}

The discrete model that we study is a variant of the limit order book model proposed by~\cite{Lakner16:0}. This is a one-sided limit order book with only limit buy orders which are therefore fulfilled by market sell orders. Since in this one-sided case no confusion can arise, in the rest of the paper we simply refer to them as limit orders and market orders, and we call \emph{price} the bid price, i.e., the highest price associated with a limit order in the book. Limit and market orders arrive according to two independent Poisson processes, and:
\begin{description}
	\item [Upon arrival of a market order] the market order fulfills one of the limit orders associated to the current price which is therefore removed from the book;
	\item [Upon arrival of a limit order] conditionally on the state of the book, the location of this new order is distributed like $\max(M p, p_0)$ with $p$ the current price, $p_0 > 0$ some fixed parameter and $M$ some positive random variable.
\end{description}
The parameter $p_0$ models the action of a market maker that prevents the price from reaching arbitrary low values. Our techniques could be extended to allow for $p_0 = 0$: in this case, $p_0$ has to be replaced by the past infimum of the price, see the discussion in Section~\ref{sec:discussion}.

In our model, a new limit order is only placed in the vicinity of the current price: this assumption is justified from empirical evidence that shows that the major component of the order flow occurs at the (best) bid and ask price levels, see for instance~\cite{Biais95:0}. This feature has been incorporated in previous models such as~\cite{Cont:0, Cont14:0} and the purpose of our model is to understand the impact of this fundamental behavior in the high-frequency regime.
\\

We model the state of the book at time $t$ by a finite point measure $X_t$ on $\R_+$ where atoms record positions of orders in the book. Our main result (Theorem~\ref{thm:main} below) states that when $\E(\log M) > 0$ and in the high-frequency regime where the rates at which the market and limit orders arrive grow large, then the process $(X_t, t \geq 0)$ appropriately scaled converges to the measure-valued process
\[ \left( \frac{1}{\E(\log M)} \Lambda(W_t), t \geq 0 \right) \]
with $W$ a Brownian motion reflected at $0$ and $\Lambda(w)$ for each $w \geq 0$ the measure which acts on bounded measurable functions $f: \R \to \R$ as follows:
\begin{equation} \label{eq:limit-mul}
	\int f(x) \Lambda(w)(\d x) = \int_{p_0}^{p_0 e^w} \frac{f(x)}{x} \d x.
\end{equation}
Note that $W$ has variance $(2 \lambda) [\E(\log M)]^2$ and is allowed to have a drift $m \in \R$: $\lambda$ is the asymptotic rate at which orders arrive and $m$ emerges as the difference between the arrival rates of market and limit orders. The drift thus reflects the imbalance between offer and demand which is line with the standard economic models of price evolution such as in~\cite{Foucault05:0}.

In the original discrete model, the price at time $t$ corresponds to the supremum of the support of $X_t$. In the limit, we define the limiting price process $(\pi_t, t \geq 0)$ in the same way from $((1/\E(\log M)) \Lambda(W_t), t \geq 0)$, i.e., $\pi_t = \sup {\rm supp} (\Lambda(W_t))$.

\begin{rk}
	Here we have stated a multiplicative version of our result where the location of an order is obtained as a multiplicative factor of the current price. This is the most relevant form of our result from an application point of view and is thus suited for the discussion below. However, from a technical standpoint it is more convenient to take a logarithmic transformation and consider an additive model where the displacement is obtained as a linear addition to the current pice. This will be the setting adopted in Section~\ref{sec:model:main} onward, and in particular in the statement of our main result, Theorem~\ref{thm:main} below.
\end{rk}

\subsection{Insights into limit order book}

This result brings practical insight into the behavior of limit order books in the high frequency regime as we now discuss.

\subsubsection{State-space collapse, local evolution and asymptotic Markovianity of the price} \label{subsub:SSCandco}

Our limiting process $((1/\E(\log M)) \Lambda(W_t), t \geq 0)$ has the striking feature that the associated price process $(\pi_t, t \geq 0)$ is Markov, namely it is a geometric reflected Brownian motion. Note that this is in accordance with standard assumptions in finance such as in the famous model of Black and Scholes. What is more, it actually bears all the randomness since according to~\eqref{eq:limit-mul}, $\Lambda(W_t)$ is a deterministic function of~$\pi_t$. This phenomenon of dimension reduction, going from a process with values in the space of measures to a real-valued process, is well-known in queueing theory where it is referred to as the state-space collapse phenomenon~\cite{Bramson98:0, Gromoll04:0, Reiman84:0, Williams98:0}. To the best of our knowledge it is however the first time that it is observed in financial applications.

These two properties are surprising because they are far from being true in the original discrete model. Indeed, in the original discrete model it is not enough to know the price process to know the entire state of the book, and the price process is not amenable to a simple description: it is not even Markov. Actually, most of the forthcoming technical difficulties come from the fact that in order to control the discrete price process, one needs to know the entire state of the book.

A closely related striking feature is that in the limit, the price process becomes symmetric in the following sense. At the discrete level, the price process increases when a limit order arrives and draws a random variable $M > 1$: in this case, the (multiplicative) increase is independent of the state of the book and is distributed like $M$ conditioned on being $>1$. On the other hand, the price can only decrease when a market order arrives and in this case the decrease of the price is governed by the state of the book. Intuitively, the more orders in the book the smaller the decrease since it will be more likely for an order to be close to the current price. However, this asymmetry is washed out in the limit: the fact that the limiting price process is a geometric reflected Brownian motion implies that it behaves as if the increase and decrease were distributed identically. Note that, heuristically at least, this gain in symmetry is necessary for the price process to become Markovian since otherwise, the evolution of the price process would depend on the state of the book. The local evolution of the price and its asymptotic Markovianity are therefore closely related.

As mentioned in Section~\ref{sub:model-description}, our model is meant to shed light on the impact of the fact that orders are placed in the vicinity of the current price: in light of the above discussion, we believe that one of the insights of our result is to justify the use of Markovian models for price evolution in the high-frequency regime, even though this would not be a reasonable assumption at the discrete level. We note moreover that our model leads to the price process following a geometric reflected Brownian motion which is in accordance with standard models~\cite{Hull18:0}.

\subsubsection{Convergence of the entire book and useful approximations} \label{subsub:conv-functionals}

Furthermore, establishing convergence of the measure-valued process describing the whole state of the book bears at once all the relevant information on the limit order book. For instance, our result combined with the continuous mapping theorem implies the convergence of:
\begin{description}
	\item [The price process:] the price process converges toward the stochastic process $(\pi_t, t \geq 0)$ with $\pi_t$ the supremum of the support of the measure $\Lambda(W_t)$ and so, as mentioned above, $\pi$ is a geometric reflected Brownian motion. In particular, we know its law which is for instance given in the case of zero drift by $e^{\lvert N \rvert}$ with $N$ a normal random variable, which allows for explicit computation of its mean, variance, etc;
	\item [The depth of the book:] the depth of the book converges to the process $(\pi_t - p_0, t \geq 0)$;
	\item [The total value of the book:] in the discrete model, it is defined as the sum of the prices corresponding to the limit orders in the book. In terms of the measure $X_t$, this corresponds to the mass $\int x X_t(\d x)$ which thus converges under the appropriate scaling toward
	\[ \int x \Lambda(W_t)(\d x) = \frac{1}{\E(\log M)}(\pi_t - p_0). \]
\end{description}
Further examples of functionals that converge include the time-to-fill of an order in the book or the fill probability. From a practical standpoint, these convergence results yield approximations of discrete quantities by continuous ones in the high-frequency regime. These approximations are particularly useful because, as discussed above, the limiting continuous process is much simpler than the original discrete one since it is a deterministic function of a geometric Brownian motion.

For simplicity we have restricted the proof of our main result to the case where $W$ has no drift. With minor technical additions they could be extended to allow for an arbitrary drift $m \in \R$ which is the setting considered for the following discussion, see Section~\ref{sec:discussion} for more details. We then have the following high frequency approximations where we write $\sigma^2 = (2 \lambda) [\E(\log M)]^2$ for the variance of $W$:
\begin{description}
	\item[Time-to-fill density:] let $T$ be the time-to-fill of an order at $p$ when the current price is at $\pi > p$: then our result yields the approximation
	\[ \P(T \in \d t) \approx \frac{\log(\pi/p)}{\sigma \sqrt{2 \pi t^3}} \exp \left( - \frac{(\log(\pi/p) + mt)^2}{2 \sigma^2 t} \right); \]
	\item[Fill probability:] the previous approximation directly leads to an approximation for the fill probability of a similar order, namely
	\[ \P(T < \infty) \approx \begin{cases}
		1 & \text{ if } \ m \leq 0,\\
		\exp \left( - \frac{2 m \log(\pi / p)}{\sigma} \right) & \text{ else}.
	\end{cases} \]
\end{description}
These high frequency approximations are obtained by solving the similar problem on our limiting process $((1/\E(\log M)) \Lambda(W_t), t \geq 0)$. This Brownian approximation of the price process also tells us that in the high-frequency regime, the probability for the price to go up or down is actually independent of the state of the book and is close to $\frac{1}{2}$.


\subsection{Link with other strands of applied probability} Our model is motivated by financial applications but it can also be viewed as a queueing model and as a particular model of branching random walk: we now detail the potential contribution of our paper for these two fields. Note also that, as will be discussed in Section~\ref{subsub:LIFO} below, the techniques we develop give insight into the heavy-traffic approximation of the LIFO queue established by~\cite{Limic00:0, Limic01:0}.

\subsubsection{Spatial queueing models} There has recently been a surge of interest for spatial queueing models where either the server moves and/or users are spread in space. The references~\cite{Altman94:0, Coffman87:0, Foss15:0} belong to the first category and investigate the performance of various spatial service disciplines as the server moves and customers arrive in space, while the references~\cite{Aldous17:0, Bouman11:1, Ven13:0} belong to the second category and analyze, among others, the performance of various classical medium access mechanisms when customers are spread in space.

In terms of spatial queueing models, our model can be formulated as follows: there is a queue of customers lined up on $\R$, with only the rightmost customer being served. When a new customer enters, she chooses her position at a random distance from the rightmost customer, i.e., the customer currently in service. If the new customer becomes the rightmost one, then she receives service, i.e., the service discipline is pre-emptive. If customers were always picking a position to the right of the rightmost customer, then this queueing system would essentially be a LIFO queue. However, because customers may choose a position to the left of the customer currently in service this creates an intricate correlation between service and spatial positions which we analyze here.

\subsubsection{Link with branching random walks}

In a branching random walk, individuals reproduce as in a Galton--Watson branching process and also undergo spatial motions with the constraint that children start off at the same location as their parent. This is an important model in applied probability with connections to, e.g., partial differential equations and Gaussian fields, see for instance~\cite{Bramson78:1} and~\cite{Ding14:0}.

Many variants of this model have been studied, such as branching random walks with a barrier (introduced by~\cite{Biggins91:0}) or more recently the $N$ branching random walk where only the $N$ rightmost particles are allowed to reproduce and the others are killed as in~\cite{Brunet97:0}. As will be recalled below, in~\cite{Simatos14:1} a coupling was established between the model of the present paper and branching random walks, which makes it possible to see our model as a branching random walk where only the rightmost particle is allowed to reproduce, and does so one child at a time.

\subsection{Relation with previous work}

As mentioned earlier, we consider in the present paper the case $\E(\log M) > 0$. The case $\E(\log M) < 0$ is studied in~\cite{Lakner16:0} and exhibits a fundamentally different behavior. Namely, when $\E(\log M) < 0$ the entire limit order book appropriately scaled is asymptotically concentrated at the price and the latter converges to a monotonically decreasing process. The sharp contrast between the cases $\E(\log M) > 0$ and $\E(\log M) < 0$ reflects the intrinsic asymmetric nature of the discrete limit order book model itself, discussed in Section~\ref{subsub:SSCandco}.

%
%
The case $\E(\log M) < 0$ is treated in~\cite{Lakner16:0} using stochastic calculus arguments, and the main challenge of the proof is to show that the price process is asymptotically monotone. When $\E(\log M) > 0$ this is no longer the case, and the main challenge of the present paper is to show that the price process converges to a geometric reflected Brownian motion. Because of this fundamentally different asymptotic behavior, the techniques developed in~\cite{Lakner16:0} cannot be directly applied to study the case $\E(\log M) > 0$.

For this reason, we develop here entirely new arguments relying on the regenerative property of~\cite{Weill07:0}. To unfold this proof strategy, we need to control specific random times, typically left and right endpoints of excursions. These controls are provided by a coupling laid down in~\cite{Simatos14:1} between the model studied here and a branching random walk. Conceptually, this coupling is merely a technical tool that is only used to control these random times: it is conceivable that they could be controlled by other means and the coupling avoided altogether. In contrast, the connection with L\'evy trees which we present briefly now, and in more details in Section~\ref{sub:link-levy-trees}, lies at the heart of our approach and constitutes an original contribution which could be useful for other stochastic systems.

\subsection{An unexpected connection with L\'evy trees and the heavy traffic limit of the LIFO queue revisited}

\subsubsection{A new characterization of the reflected Brownian motion}

Quite surprisingly, we find that in our model the log-price process satisfies a regenerative property very close to the one characterizing the contour process of Galton--Watson trees, see Lemma~\ref{lemma:discrete-reg} and the discussion following it. Roughly speaking, this regenerative property says that for any level $a > 0$, successive excursions above level $a$ are i.i.d.\ and that their common law does not depend on $a$.

Clearly this property is satisfied by a reflected Brownian motion and one of the takeaway from~\cite{Weill07:0} is that, under certain conditions, this is the only process that satisfies this property. In other words, this regenerative property can be used to characterize a reflected Brownian motion.

Since the discrete log-price process almost satisfies this regenerative property, in order to prove that it converges to a reflected Brownian this suggests to prove that any of its accumulation points satisfies this regenerative property. Although this argument lays down an elegant and attractive proof strategy, many technical details need to be taken care of along the way and Section~\ref{sec:proof} of the paper is dedicated to working these details out.

\subsubsection{The heavy traffic limit of the LIFO queue revisited} \label{subsub:LIFO}

To conclude the presentation and implications of our results, we note that the approach explained above, using the regenerative characterization of the reflected Brownian established by~\cite{Weill07:0}, provides a new interpretation for the results by~\cite{Limic00:0, Limic01:0} where it is proved that the scaling limit of the pre-emptive LIFO queue is the height process of a L\'evy tree.

First of all, note that the queue length process of the pre-emptive LIFO queue indeed exhibits the above regenerative property (this was already observed by~\cite{Nunez-Queija01:0}): the beginning of a new excursion at level $a$ corresponds to an arrival of a customer when there were $a-1$ customers in the queue. The excursion above level $a$ lasts until the queue length process falls back to level $a-1$ and because of the pre-emptive LIFO service discipline, this excursion is distributed as a regular busy cycle. In particular, it is independent of $a$ and moreover, the service discipline is such that successive excursions are i.i.d..

Assuming that the queue length process properly scaled converges and that this regenerative property passes to the limit, it becomes very natural that any accumulation point of the queue length process satisfies the continuous version of the regenerative property as defined by~\cite{Weill07:0}. As~\cite{Weill07:0} showed that this regenerative property characterizes the height process of L\'evy trees~\cite{Duquesne02:0}, this provides another explanation as why, as was showed by~\cite{Limic00:0, Limic01:0}, the pre-emptive LIFO queue is the height process of a L\'evy tree.

\subsection{Organization of the paper}

Section~\ref{sec:model:main} introduces basic notation, presents our main result (Theorem~\ref{thm:main}) and discusses more formally the connection with L\'evy trees. Before proceeding to the proof of Theorem~\ref{thm:main} in Section~\ref{sec:proof}, we introduce in Section~\ref{sec:coupling} the coupling of~\cite{Simatos14:1}, and additional notation together with preliminary results in Section~\ref{sec:notation}. We conclude the paper by discussing in Section~\ref{sec:discussion} possible extensions of our results.

\subsection{Acknowledgements} F.\ Simatos would like to thank N.\ Broutin for useful discussions about branching random walks that lead to the results of Appendix A. The authors are also grateful to I.\ Kortchemski for suggesting the current proof of the second bound in~\eqref{eq:GW} which simplified earlier arguments.

\section{Model and main result}
\label{sec:model:main}

\subsection{Model and main result}

Let $\Mcal$ be the set of finite and positive measures on $[0,\infty)$. We equip $\Mcal$ with the vague topology and consider $\Dcal([0,\infty), \Mcal)$ the class of c\`adl\`ag mappings from $[0,\infty)$ to $\Mcal$, which we endow with the Skorohod topology. It is well-known, see for instance~\cite[Section A.2]{Kallenberg02:0}, that $\Mcal$ and $\Dcal([0,\infty), \Mcal)$ with these topologies are Polish spaces.

Let $\zero \in \Mcal$ be the zero measure, $\delta_a$ be the Dirac mass at $a \geq 0$ and $\Mcal_F \subset \Mcal$ be the set of finite point measures, i.e., measures $\nu \in \Mcal$ with finite support and of the form $\nu = \sum_p \varsigma_p \delta_p$ for some integers $\varsigma_p$. For a measure $\nu \in \Mcal$ let $\pi(\nu)$ be the supremum of its support:
\[ \pi(\nu) = \sup \left\{ y \geq 0: \nu([y, \infty)) > 0 \right\} \]
with the convention $\pi(\zero) = 0$; $\pi(\nu)$ will be called the \emph{price} of the measure $\nu$, and an atom of $\nu \in \Mcal_F$ will be referred to as an \emph{order}.
\\

We use the canonical notation and denote by $(X_t, t \geq 0)$ the canonical $\Mcal$-valued process. Let $\P_\chi$ be the law of the $\Mcal_F$-valued (strong) Markov process started at $\chi \in \Mcal_F$ and with generator $\omega$ given by
\[ \omega(f)(\nu) = \lambda \E \left[ f(\nu + \delta_{(\pi(\nu) + J)^+}) - f(\nu) \right] + \lambda \left[ f(\nu - \delta_{\pi(\nu)}) - f(\nu) \right] \indicator{\nu \not = \zero} \]
where $a^+ = \max(0,a)$ for $a \in \R$, and where $\lambda > 0$ and $J$, a real-valued random variable, are the only two parameters of the model under consideration. In words, the dynamic is as follows. We are given two independent Poisson processes, each of intensity $\lambda$. When the first one rings, a new order is added to the process and is located at a distance distributed like $J$ to the current price, independently from everything else ($J$ will sometimes be referred to as the \emph{displacement} of the newly added order). Note however that an order cannot be placed in the negative half-line, and so an order with displacement $J$ is placed at $(\pi(\nu) + J)^+$ (this boundary condition will be discussed in Section~\ref{sec:discussion}). When the second Poisson process rings and provided that at least one order is present, an order currently sitting at the price is removed (it does not matter which one).
\\

Let $\P^n_\chi$ be the law of $(\vartheta_n(X_{n^2 t}), t \geq 0)$ under $\P_\chi$, where $\vartheta_n: \Mcal \to \Mcal$ acts on measures as follows:
\begin{equation} \label{eq:vartheta}
	\vartheta_n(\nu)([y,\infty)) = \frac{1}{n} \nu([ny, \infty)), \ y \geq 0.
\end{equation}

In the sequel we will omit the subscript when the initial state is the empty measure $\zero$, i.e., we will write $\P$ and $\P^n$ for $\P_\zero$ and $\P^n_\zero$, respectively, with their corresponding expectations $\E$ and $\E^n$. For convenience we will also use $\P$ and $\E$ to denote the probability and expectation of other generic random variables (such as when we write $\E(J)$, or when we consider random trees).

Let $\Mcal^n_F = \vartheta_n(\Mcal_F) = \{ \vartheta_n(\nu): \nu \in \Mcal_F\}$. In the sequel we will denote by $\nu^n$ for $\nu \in \Mcal_F^n$ the only measure in $\Mcal_F$ such that $\vartheta_n(\nu^n) = \nu$. Let in the sequel $W$ be a standard Brownian motion reflected at $0$ and $\alpha = (2 \lambda)^{1/2}$. The following result, which is the main result of the paper, shows that $\P^n$ converges weakly to a measure-valued process which can simply be expressed in terms of $W$.

\begin{theorem} \label{thm:main}
	Assume that $\E(J) > 0$ and that $J \in \left\{-j^*, -j^* + 1, \ldots, 0, 1\right\}$ for some $j^* \in \N$. Then as $n \to +\infty$, $\P^n$ converges weakly to the unique probability measure under which $X$ satisfies the following two properties:
	\begin{enumerate}
		\item[a.] $\pi \circ X$ is equal in distribution to $\alpha \E(J) W$;
		\item[b.] $X_t$ for each $t \geq 0$ is absolutely continuous with respect to Lebesgue measure with density $\indicator{0 \leq y \leq \pi(X_t)} / \E(J)$, i.e.,
		\begin{equation} \label{eq:limit}
			X_t([0,y]) = \frac{1}{\E(J)} \min \left( y, \pi(X_t) \right), \ t, y \geq 0.
		\end{equation}
	\end{enumerate}
\end{theorem}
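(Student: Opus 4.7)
The proof decomposes naturally into two parts, in line with the strategy outlined in the introduction: first, establishing the convergence of the price process $\pi \circ X^n$ to $\alpha \E(J) W$, and second, transferring this to convergence of the full measure-valued process $X^n$ to the density characterized in~\eqref{eq:limit}. Throughout, I would rely on the branching random walk coupling from~\cite{Simatos+:0} (the subject of Section~\ref{sec:coupling}) and on the BRW-with-barrier estimates of Appendix~\ref{appendix}.

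The first and more demanding part I would carry out in three stages. \emph{Stage 1 (Tightness).} Prove tightness of $(\pi \circ X^n)_{n \ge 1}$ in $\Dcal([0,\infty), \R_+)$. A natural route is to dominate $\pi(X^n)$ by the running maximum of the associated branching random walk and to invoke Appendix~\ref{appendix} to control the modulus of continuity uniformly in $n$; because $J \le 1$ almost surely, upward jumps of $\pi \circ X^n$ have size at most $1/n$ after rescaling, so any limit point is continuous. \emph{Stage 2 (Regenerative identification).} Each prelimit $\pi \circ X^n$ satisfies a discrete near-regenerative property: conditionally on hitting a level $a > 0$, its successive excursions above $a$ are i.i.d.\ up to an entry overshoot of size at most $1/n$. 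Passing to the limit along a subsequence, any accumulation point $Y$ satisfies the continuous regenerative property $\RP$ and therefore by~\cite{Weill07:0} is the contour function of some L\'evy tree; continuity of $Y$ restricts us to the Brownian case, i.e.\ $Y$ is a constant multiple of reflected Brownian motion. \emph{Stage 3 (Identifying the constant).} To single out $\alpha \E(J) W$, I would use the convergence of the total-mass process to $W$ (available from~\cite{Lakner:0}) together with the asymptotic linear relation $\pi \approx \E(J) \cdot (\text{total mass})$---itself a consequence of the expected uniform density of atoms---to pin down the scaling.

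For the second part, given the functional convergence of the price, I would deduce convergence of the measure from the BRW coupling. At a macroscopic time $n^2 t$, each remaining atom can be traced back to a birth time at which it was placed at (then-current price)$+J$; iterating this through the coupling, each atom's rescaled position is essentially a past running maximum of $\pi \circ X^n$ minus an i.i.d.\ sum of increments distributed like $-J$. A renewal / law-of-large-numbers argument then yields, for each bounded continuous $\varphi$,
\[
\langle X^n_t, \varphi \rangle \;\longrightarrow\; \frac{1}{\E(J)} \int_0^{\pi(X_t)} \varphi(y) \, \d y,
\]
which is~\eqref{eq:limit}. Functional convergence in $\Dcal([0, \infty), \Mcal)$ then follows by combining this one-dimensional convergence with tightness inherited from the tightness of the price and total-mass processes.

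The main obstacle, in my view, lies in Stage 2: turning the discrete near-regenerative property into the continuous property $\RP$ on any accumulation point. One must show that the $O(1/n)$ overshoots at entry times of excursions vanish uniformly on compact time intervals, so that the limiting excursions above any level are genuinely i.i.d.\ conditionally on the hitting time, and one must verify the measurability and continuity hypotheses of Weill's characterization on the limit, ruling out pathological behaviour of small excursions. Both points ultimately rely on the uniform control of the excursion structure provided by the BRW-with-barrier results of Appendix~\ref{appendix}, which is presumably why the authors developed them separately.
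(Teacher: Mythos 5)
Your proposal follows the same broad architecture as the paper (tightness, discrete regeneration $\to$ Weill's property (R) $\to$ L\'evy tree $\to$ Brownian, then identify the density), so on strategy you are well aligned. But two points in your outline are genuinely problematic.

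First, there is a circularity in how you propose to pin down the constant in Stage~3 and to prove the density formula in Part~2. You say you would use the relation $\pi \approx \E(J)\cdot(\text{total mass})$, which you present as ``a consequence of the expected uniform density of atoms''; but the uniform density is precisely what Part~2 is meant to establish. The paper breaks this loop by working with local times: it proves (Lemma~\ref{lemma:asymptotic-local-time}, via the coupling and a CLT for the exploration time below level $\varepsilon n$) the identity $\Lcal(\pi \circ X) = \Lcal(\E(J)\, M \circ X)$ under $\Pbf$, then applies a stochastic-calculus / optional-sampling argument (Lemma~\ref{lemma:price=mass}) to upgrade this local-time identity to the pathwise relation $M(X_t) = \pi(X_t)/\E(J)$. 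This is the step that fixes the diffusion coefficient and simultaneously yields the slope of the density. Your proposal contains nothing that would play the role of this local-time identity, and without it I do not see how you break the circularity.

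Second, your Part~2 argument (tracing each atom back through the coupling to its birth time and running a renewal / LLN) is a much heavier and more delicate route than what the paper actually uses, and it is not clear it would go through cleanly: an atom at rescaled height $y$ at time $t$ could have been placed during any of the many visits of the price process to a neighbourhood of $y$, so the ``running maximum minus i.i.d.\ increments'' picture is not quite the right description of the configuration. The paper avoids this entirely: once one knows $M(X_t) = \pi(X_t)/\E(J)$, the density follows from the purely structural observation (Corollary~\ref{cor:local-evolution}) that under $\Pbf$ the measure only evolves locally near the price, so that for $y < \pi(X_t)$ one has $X_t([0,y]) = X_g([0,y]) = M(X_g) = \pi(X_g)/\E(J) = y/\E(J)$ with $g$ the left endpoint of the excursion above $y$ straddling $t$. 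I would also flag that your outline skips the step of first showing that $\pi \circ X$ under $\Pbf$ is regenerative at $0$ in the excursion-theoretic sense (properties~\ref{prop:zero-set}--\ref{prop:reg} in Section~\ref{sub:proof-regeneration-at-0}): this requires controlling the Lebesgue measure of the zero set and the hitting times $D_t$, which in the paper takes Lemmas~\ref{lemma:control-mean-local-time}--\ref{lemma:D}, and it is a prerequisite to even defining the excursion measure $\Ncal$ to which Weill's theorem is applied. Finally, you only discuss tightness of the price, whereas the theorem concerns the full measure-valued process, whose tightness requires a separate semimartingale argument (the paper's first step).
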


\begin{rk}
	We will prove more than is stated, namely, we will show that $X$ converges jointly with its mass and price processes, and also with their associated local time processes at $0$ (see Lemma~\ref{lemma:joint-convergence}).
\end{rk}

In the rest of the paper we assume that the assumptions of this theorem hold, i.e., $\E(J) > 0$ and $J \in \{-j^*, \ldots, 1\}$ for some $j^* \in \N$. The behavior when $\E(J) < 0$ is completely different and has been treated in~\cite{Lakner16:0} using stochastic calculus arguments, see the Introduction and Section~\ref{sec:discussion} for more details.

\subsection{Link with L\'evy trees: detailed discussion} \label{sub:link-levy-trees}

The following lemma is at the heart of our approach to prove Theorem~\ref{thm:main}. Let in the sequel $\Dcal$ be the set of real-valued c\`adl\`ag functions with domain $[0,\infty)$ and
\[ \zeta(f) = \inf\{ t > 0: f(t) = 0 \} \]
for $f \in \Dcal$. We call \emph{excursion}, or \emph{excursion away from $0$}, a function $f \in \Dcal$ with $0 < \zeta(f) < + \infty$ and $f(\zeta(f) + t) = 0$ for all $t \geq 0$ (note that we only consider excursions with finite length). We call \emph{height} of an excursion its supremum, and denote by $\Ecal$ the set of excursions. For $a \geq 0$ and $g \leq d$ we say that the function $e = (f((g + t) \wedge d) - a, t \geq 0)$ is an \emph{excursion of $f$ above level $a$} if $e \in \Ecal$, $e_t \geq 0$ for every $t \geq 0$ and $f(g-) \leq a$.

\begin{lemma} \label{lemma:discrete-reg}
	Let $a \geq 0$ be any integer. Then under $\P$, the sequence of successive excursions of $\pi \circ X$ above level $a$ are i.i.d., with common distribution the first excursion of $\pi \circ X$ away from $0$ under $\P_{\delta_1}$.
\end{lemma}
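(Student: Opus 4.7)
My plan is to combine the strong Markov property with the following two structural observations about the dynamics. First, since $J \in \{-j^*, \ldots, 1\}$ is integer-valued with $J \leq 1$, each addition event moves the price by at most $+1$; hence every excursion of $\pi \circ X$ above level $a$ starts at a stopping time $g$ at which an atom is added at position exactly $a+1$, i.e., $X_g = X_{g-} + \delta_{a+1}$ with $\pi(X_{g-}) = a$ and $X_{g-}$ supported in $[0,a]$. Writing $g_1 < g_2 < \cdots$ for the successive such times, the strong Markov property implies that the successive excursions are conditionally independent given $(X_{g_i})_{i \geq 1}$, so it suffices to show that the law of the $i$-th excursion depends on $X_{g_i}$ only through its restriction to $(a, \infty)$, which is always $\delta_{a+1}$.

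Second, let $X^{>a}_t$ denote the restriction of $X_t$ to $(a, \infty)$. While $\pi(X_t) > a$, the top atom lies above $a$, so deletion events always act on $X^{>a}$, and an addition at $(\pi(X_t) + J)^+$ contributes to $X^{>a}$ precisely when $\pi(X_t) + J > a$, an event that depends only on $\pi(X_t) = \pi(X^{>a}_t)$ and on the independent variable $J$. Hence $X^{>a}$ is an autonomous Markov process during the excursion, which starts from $\delta_{a+1}$ at each $g_i$ regardless of $X_{g_i-}$; this gives the insensitivity to the past required in the first paragraph.

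It remains to identify the law of this shifted excursion with that of $\pi \circ X$ under $\P_{\delta_1}$ stopped at its first hit of $0$. Translating atoms of $X^{>a}$ by $-a$ yields a process starting from $\delta_1$ whose dynamics on $(0,\infty)$ coincide with those of $X$ under $\P_{\delta_1}$: in both cases, at rate $\lambda$ a new atom is placed at $\tilde\pi + J$ whenever $\tilde\pi + J > 0$, and at rate $\lambda$ the top atom is removed. The only difference is that under $\P_{\delta_1}$ an extra atom is deposited at $0$ when $\tilde\pi + J \leq 0$, but such an atom affects neither the configuration strictly above $0$ nor the price up to the first return to $0$. The main (and essentially only) step requiring genuine care is this last identification: one must verify carefully that the atoms placed at $0$ under $\P_{\delta_1}$ never influence $\pi \circ X$ before its first return to $0$, so that the shifted excursion of $\pi \circ X$ above $a$ truly has the same distribution as the first excursion of $\pi \circ X$ away from $0$ under $\P_{\delta_1}$.
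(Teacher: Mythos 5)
Your proof is correct and follows essentially the same route as the paper: identify that each excursion above $a$ opens with $X_g = X_{g-} + \delta_{a+1}$ and $\pi(X_{g-}) = a$, argue that the dynamics above level $a$ during the excursion are autonomous and do not see $X_{g-}$, match them (after shifting by $-a$) with the dynamics under $\P_{\delta_1}$ noting that atoms deposited at $0$ cannot affect the price before it first returns to $0$, and conclude by the strong Markov property at the successive start times. You merely spell out more explicitly the autonomy of the restriction $X^{>a}$ and the irrelevance of atoms at the boundary, which the paper asserts in a single sentence.
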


\begin{proof}
	Consider $X$ under $\P_\nu$ for any $\nu \in \Mcal_F$ with $\pi(\nu) \leq a$ that only puts mass on integers. Then when the first excursion (of $\pi \circ X$) above $a$ begins, the price is at $a$ and an order is added at $a+1$. Thus if $g$ is the left endpoint of the first excursion above $a$, $X_g$ must be of the form $X_g = X_{g-} + \delta_{a+1}$ with $\pi(X_{g-}) = a$. This excursion lasts as long as at least one order sits at $a+1$, and if $d$ is the right endpoint of the first excursion above $a$, then what happens during the time interval $[g,d]$ above $a$ is independent from $X_{g-}$ and is the same as what happens above $0$ during the first excursion of $\pi \circ X$ away from $0$ under $\P_{\delta_1}$. Moreover, $X_d$ only puts mass on integers and satisfies $\pi(X_d) \leq a$, so that thanks to the strong Markov property we can iterate this argument. The result therefore follows by induction.
\end{proof}

\begin{rk}
	For $a \geq 0$ let $R_a : \Mcal \to \Mcal$ be defined by
	\[ R_a(\nu)([y,\infty)) = \nu([a+y, \infty)), \]
	and call $(R_a(X_t), g \leq t \leq d)$ an excursion of $X$ above level $a$ if the path $(\pi(X_t), g \leq t \leq d)$ is an excursion of $\pi \circ X$ above level $a$. Then the above proof actually shows that the successive excursions above level $a$ of $X$ are i.i.d., with common distribution the first excursion above $0$ of $X$ under $\P_{\delta_1}$.
\end{rk}

Lemma~\ref{lemma:discrete-reg} is at the heart of our proof of Theorem~\ref{thm:main}. Indeed, this regenerative property is strongly reminiscent of Galton--Watson branching processes. More precisely, consider a stochastic process $H \in \Ecal$ with finite length and continuous sample paths, that starts at $1$, increases or decreases with slope $\pm 1$ and only changes direction at integer times.

For integers $a \geq 0$ and $p > 0$ and conditionally on $H$ having $p$ excursions above level $a$, let $(e^k_{a,p}, k = 1, \ldots, p)$ be these $p$ excursions. Then $H$ is the contour function of a Galton--Watson tree if and only if for each $a$ and $p$, the $(e^k_{a,p}, k = 1, \ldots, p)$ are i.i.d.\ with common distribution $H$. Indeed, $H$ can always be seen as the contour function of some discrete tree. With this interpretation, the successive excursions above $a$ of $H$ code the subtrees rooted at nodes at depth $a+1$ in the tree. The $(e^k_{a,p}, k = 1, \ldots, p)$ being i.i.d.\ therefore means that the subtrees rooted at nodes at depth $a$ are i.i.d.: this is precisely the definition of a Galton--Watson tree.
\\

The difference between this regenerative property and the regenerative property satisfied by $\pi \circ X$ under $\P$ and described in Lemma~\ref{lemma:discrete-reg} is that, \emph{when conditioned to belong to the same excursion away from $0$}, consecutive excursions of $\pi \circ X$ above some level are neither independent, nor identically distributed. If for instance we condition some excursion above level $a$ to be followed by another such excursion within the same excursion away from $0$, this biases the number of orders put in $\{0, \ldots, a\}$ during the first excursion above $a$. Typically, one may think that more orders are put in $\{0,\ldots,a\}$ in order to increase the chance of the next excursion above $a$ to start soon, i.e., before the end of the current excursion away from $0$.

However, this bias is weak and will be washed out in the asymptotic regime that we consider. Thus it is natural to expect that $\pi \circ X$ under $\P$, properly renormalized, will converge to a process satisfying a continuous version of the discrete regenerative property satisfied by the contour function of Galton--Watson trees.
\\

Such a regenerative property has been studied in~\cite{Weill07:0}, who has showed that it characterizes the contour process of L\'evy trees (see for instance~\cite{Duquesne02:0} for a background on this topic). Thus upon showing that this regenerative property passes to the limit, we will have drastically reduced the possible limit points, and it will remain to show that, among the contour processes of L\'evy trees, the limit that we have is actually a reflected Brownian motion. From there, a argument based on local time considerations allows us to conclude that Theorem~\ref{thm:main} holds.

In summary, our proof of Theorem~\ref{thm:main} will be divided into four main steps:
\begin{enumerate}
	\item showing tightness of $\P^n$;
	\item showing, based on Lemma~\ref{lemma:discrete-reg}, that for any accumulation point $\Pbf$, $\pi \circ X$ under $\Pbf$ satisfies the regenerative property studied in~\cite{Weill07:0} (most of the proof is devoted to this point);
	\item arguing that among the contour processes of L\'evy trees, $\pi \circ X$ under $\Pbf$ must actually be a reflected Brownian motion;
	\item showing that $X_t$ under $\Pbf$ has density $\indicator{y \leq \pi(X_t)} / \E(J)$ with respect to Lebesgue measure.
\end{enumerate}

\section{Coupling with a branching random walk} \label{sec:coupling}

\subsection{Coupling with a branching random walk}

In this section we introduce the coupling of~\cite{Simatos14:1} between our model and a particular random walk with a barrier. As mentioned in the Introduction, this coupling plays a crucial role in the proof of Theorem~{\protect\ref{thm:main}}. Let $\T$ be the set of \emph{colored, labelled, rooted} and \emph{ordered} trees. Trees in $\T$ are endowed with the lexicographic order. Thus in addition to its genealogical structure, each edge of a tree $\tree \in \T$ has a real-valued label and each node has one of three colors: either white, green or red.

In the sequel we write $v \in \tree$ to mean that $v$ is a node of $\tree$, and we denote by $\emptyset \in \tree$ the root of $\tree$, by $\lvert \tree \rvert$ its size (the total number of nodes) and by $h(\tree)$ its height. Nodes inherit labels in the usual way, i.e., the root has some label and the label of a node that is not the root is obtained recursively by adding to the label of its parent the label on the edge between them. If $v \in \tree$ we write $\psi(v, \tree)$ for the label of $v$ (in $\tree$), $\lvert v \rvert$ for the depth of $v$ (so that, by our convention, $\lvert \emptyset \rvert = 1$ and $h(\tree) = \sup_{v \in \tree} \lvert v \rvert$) and $v_k \in \tree$ for $k = 1, \ldots, \lvert v \rvert$ for the node at depth $k$ on the path from the root to $v$ (so that $v_1 = \emptyset$ and $v_{\lvert v \rvert} = v$). Also, $\psi^*(\tree) = \sup_{v \in \tree} \psi(v, \tree)$ is the largest label in $\tree$, $\gamma(\tree)$ is the green node in $\tree$ with largest label, with $\gamma(\tree) = \emptyset$ if $\tree$ has no green node and in case several nodes have the largest label, $\gamma(\tree)$ is the last one, and $\Gamma(\tree) \in \Mcal_F$ is the point measure that records the labels of green nodes:
\[ \Gamma(\tree) = \sum_{v \in \tree: v \text{ is green}} \delta_{\psi(v, \tree)}. \]

We say that a node $v \in \tree$ is \emph{killed} if the label of $v$ is $<$ than the label of the root, and if the label of every other node on the path from the root to $v$ has a label $\geq$ to the one of the root. Let $\Kcal(\tree) \subset \tree$ be the set of \emph{killed nodes}:
\[ \Kcal(\tree) = \left\{ v \in \tree: \min_{1 \leq k \leq \lvert v \rvert-1} \psi(v_k, \tree) \geq \psi(\emptyset, \tree) \ \text{ and } \ \psi(v, \tree) < \psi(\emptyset, \tree) \right\} \]
and consider $B(\tree) \in \T$ the tree obtained from $\tree$ by removing all the descendants of the killed nodes (but keeping the killed nodes themselves), and $B_+(\tree)$ the tree obtained from $B(\tree)$ by applying the map $x \mapsto x^+$ to the label of every node in $B(\tree)$. Note that since $B(\tree)$ is a subtree of $\tree$, we always have $\psi^*(B(\tree)) \leq \psi^*(\tree)$.
\\

Let $\Phi: \T \to \T$ be the operator acting on a tree $\tree \in \T$ as follows. If $\tree$ has no green node then $\Phi(\tree) = \tree$. Else, $\Phi$ changes the color of one node in $\tree$ according to the following rule:
\begin{itemize}
	\item if $\gamma(\tree)$ has at least one white child, then its first white child becomes green;
	\item if $\gamma(\tree)$ has no white child, then $\gamma(\tree)$ becomes red.
\end{itemize}

Let $\Phi_k$ be the $k$th iterate of $\Phi$, i.e., $\Phi_0$ is the identity map and $\Phi_{k+1} = \Phi \circ \Phi_k$, and let also $\tau(\tree) = \inf\{ k \geq 0: \psi(\gamma(\Phi_k(\tree)), \tree) < \psi(\emptyset, \tree) \}$. We will sometimes refer to the process $(\Phi_k(\tree), k = 0, \ldots, \tau(\tree))$ as the \emph{exploration of the tree $\tree$}.

Consider a tree $\tree \in \T$ such that all the nodes are white, except for the root which is green. For such a tree, the dynamic of $\Phi$ is such that $\tau(\tree)$ is the smallest $k$ at which the nodes of $B(\Phi_k(\tree)) \setminus \Kcal(\Phi_k(\tree))$ are red, the nodes of $\Kcal(\Phi_k(\tree))$ are green and the other nodes are still white. It has taken one iteration of $\Phi$ to make the nodes of $\Kcal(\tree)$ green, and two to make the nodes of $B(\tree) \setminus \Kcal(\tree)$ red (first each of them had to be made green), except for the root which was already green to start with. Thus for such a tree we have $\tau(\tree) = 2 \lvert B(\tree) \rvert - \lvert \Kcal(\tree) \rvert - 1$.
%
\\

Let finally $\Tree_x$ for $x \in \R$ be the following random tree:
\begin{itemize}
	\item its genealogical structure is a (critical) Galton--Watson tree with geometric offspring distribution with parameter $1/2$, i.e., each node has $k = 0, 1, \ldots$ children with probability $1/2^{k+1}$ independently from everything else;
	\item $\psi(\emptyset, \Tree_x) = x$ and labels on the edges are i.i.d., independent from the genealogical structure, and with common distribution $J$;
	\item all nodes are white, except for the root which is green.
\end{itemize}

Because of the last property and the preceding remark, we have
\begin{equation} \label{eq:formula-tau}
	\tau(\Tree_x) = 2 \lvert B(\Tree_x) \rvert - \lvert \Kcal(\Tree_x) \rvert - 1.
\end{equation}
Note that since $J \leq 1$, we have $\psi^*(\Tree_1) \leq h(\Tree_1)$, which gives in particular $\psi^*(B(\Tree_1)) \leq h(\Tree_1)$. The following result is a slight variation of Theorem~$2$ in~\cite{Simatos14:1}, where the same model in discrete-time and without the boundary condition (i.e., an order may be added in the negative half-line) was studied. The intuition behind this coupling is to create a genealogy between orders in the book, a newly added order being declared the child of the order corresponding to the current price, see Section~$3.1$ in~\cite{Simatos14:1} for more details.

\begin{theorem} [Theorem~$2$ in~\cite{Simatos14:1}] \label{thm:coupling}
	Let $a$ be any integer and $g < d$ be the endpoints of the first excursion of $\pi \circ X$ above level $a$. Then the process $(X_t - X_{g-}, g \leq t \leq d)$ under $\P$ and embedded at jump epochs is equal in distribution to the process $(\Gamma \circ \Phi_k \circ B_+(\Tree_{a+1}), k = 0, \ldots, \tau(\Tree_{a+1}))$.
\end{theorem}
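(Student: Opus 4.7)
The plan is to imitate the construction from Simatos~\cite{Simatos+:0} by exhibiting an explicit coupling between the excursion $(X_t - X_{g-}, g \leq t \leq d)$ sampled at its jump epochs and the exploration of a random tree which, once the boundary at $0$ is handled correctly, turns out to be $B_+(\Tree_{a+1})$. The only substantive modification relative to Simatos~\cite{Simatos+:0} is the boundary condition $(\pi(\nu) + J)^+$ on order placement; the fact that we are in continuous time instead of discrete time is immaterial because we embed at jump epochs.

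First I would set up the genealogy on orders appearing during the excursion. By the proof of Lemma~\ref{lemma:discrete-reg}, the first order of the excursion is placed at $a+1$: declare this order the root, with label $a+1$. Whenever a new order is created at a subsequent jump epoch, declare its parent to be the order currently sitting at the price, using a LIFO convention when several orders share the price (which is allowed since the generator $\omega$ does not prescribe which order at the price is removed). Two observations then give the distribution of the resulting labelled tree when the boundary at $0$ is ignored: at each jump epoch the probability of an order being added equals the probability of one being removed and is $1/2$, independently of the past, so the number of children of any given order is geometric with parameter $1/2$; and the displacement between the position of a new order and the current price is an i.i.d.\ copy of $J$, so the edge labels are i.i.d.\ with law $J$. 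These are exactly the defining distributional properties of $\Tree_{a+1}$.

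Next I would account for the boundary and for the part of the tree that is genuinely explored during the excursion. Any order whose would-be position $\psi(v, \Tree_{a+1})$ is negative is actually placed at $0$, which is precisely the effect of $B_+$. Any order placed at a level $\leq a$ (i.e., a killed node of $\Tree_{a+1}$) sits strictly below the current price and cannot become the active order during the excursion above $a$: neither it nor its eventual removal generates any subsequent event of this excursion, and none of its descendants in $\Tree_{a+1}$ belongs to the excursion either. This matches precisely the removal of descendants of killed nodes in passing from $\Tree_{a+1}$ to $B(\Tree_{a+1})$. Finally I would match the dynamics jump by jump with $\Phi$: at any jump epoch in $[g,d]$, the next event concerns the top order of the book, which is encoded by $\gamma$, the green node with largest label; either a child of $\gamma$ becomes green (an order is added) or $\gamma$ becomes red (the top order is removed), which is precisely the rule defining $\Phi$. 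The set of green nodes at step $k$ is by construction the set of orders above $X_{g-}$ currently present in the book, so $\Gamma \circ \Phi_k \circ B_+(\Tree_{a+1})$ agrees with $X_{t_k} - X_{g-}$ at the $k$-th jump epoch $t_k$, and the excursion ends exactly when $\psi(\gamma(\Phi_k(\Tree_{a+1})), \Tree_{a+1}) < a+1$, i.e.\ at $k = \tau(\Tree_{a+1})$.

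The main difficulty will not be any single step but the bookkeeping: checking that the lexicographic/LIFO tie-breaking used in the definition of $\gamma$ really matches the order in which the real dynamics selects an order at the price to remove, and verifying that the $B_+$ truncation does not alter the offspring counts or the edge distributions invoked above. This second point is essentially free because $B_+$ is applied only after the Galton Watson structure of $\Tree_{a+1}$ has been sampled. Both can be handled by a careful induction on jump epochs exactly as in Simatos~\cite{Simatos+:0}, and together with the three steps above they identify the two processes as equal in distribution.
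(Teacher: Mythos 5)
The paper itself does not prove this result: it is quoted as Theorem 2 of Simatos~\cite{Simatos+:0}, and the remark preceding the statement only records that the present version differs from the cited one by the boundary condition $(\pi(\nu)+J)^+$ (whence $B_+$ in place of $B$) and by being set in continuous time, which is neutralized by embedding at jump epochs; the coupling idea itself is attributed to Section~3.1 of~\cite{Simatos+:0}. Your outline reproduces exactly that coupling and correctly isolates both modifications, so it is consistent with what the paper relies on. Two remarks on the difficulties you flag. First, you do not actually need the LIFO tie-breaking to agree with the lexicographic one implicit in the definition of $\gamma$: both $X_t$ under $\P$ and $\Gamma(\Phi_k(\cdot))$ are measures, and neither distinguishes between two orders (resp.\ green nodes) sitting at the same position; the one-step transition of either process depends only on the label of the active node, not on which tied node is chosen, so any consistent convention produces the same measure-valued dynamics, and equality in distribution holds without any further bookkeeping. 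Second, your observation that $B_+$ cannot alter the offspring counts or the exploration order is indeed essentially free, and can be made precise as follows: the only labels changed by $B_+$ are those of killed nodes, and these labels remain $\leq a$ both before and after the truncation, so for every $k < \tau(\Tree_{a+1})$ the green node with maximal label (which has label $\geq a+1$) is the same in $\Phi_k(\Tree_{a+1})$ and $\Phi_k(B_+(\Tree_{a+1}))$, hence $\Phi$ visits the same nodes in the same order.
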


\subsection{Ambient tree}

Thanks to this coupling, we can see any piece of path of $X$ corresponding to an excursion of the price process above some level $a$ as the exploration of some random tree $\Tree_{a+1}$: we will sometimes refer to this tree as the \emph{ambient tree}. Note that the ambient tree of an excursion above $a$, say $e$, is a subtree of the ambient tree of the excursion above $a-1$ containing $e$. Moreover, the remark following Lemma~\ref{lemma:discrete-reg} implies that the ambient trees corresponding to successive excursions above some given level are i.i.d..

\subsection{Exploration time} Theorem~\ref{thm:coupling} gives, via~\eqref{eq:formula-tau}, the number of steps needed to explore the ambient tree, say $\Tree$. However, we are interested in $X$ in continuous time. Since jumps in $X$ under $\P$ occur at rate $2 \lambda$ independently from everything else, the length of the corresponding excursion is given by $\Scal(\tau(\Tree))$, where, here and in the sequel, $\Scal$ is a random walk with step distribution the exponential random variable with parameter $2 \lambda$, independent from the ambient tree $\Tree$.

More generally, we will need to control the time needed to explore certain regions of $\Tree$, which will translate to controlling $\Scal(\beta)$ for some random times $\beta$ defined in terms of $\Tree$, and thus independent from $\Scal$. As it turns out, the random variables $\beta$ that need be considered have a heavy tail distribution. Since on the other hand jumps of $\Scal$ are light-tailed, the approximation $\P(\Scal(\beta) \geq y) \approx \P(\beta \geq 2 \lambda y)$ will accurately describe the situation. Let us make this approximation rigorous: for the upper bound, we write
\begin{multline*}
	\P \left( \Scal(\beta) \geq y \right) \leq \P \left( \beta \geq \lambda y \right) + \P \left( \Scal(\beta) \geq y, \beta \leq \lambda y \right)\\
	\leq \P \left( \beta \geq \lambda y \right) + \P \left( \Scal(\lambda y) \geq y \right).
\end{multline*}
Then, a large deviations bound shows that $\P(\Scal(y) \geq y) \leq e^{-\overline \mu y}$ where we have defined $\overline \mu = (1 - \log 2) \lambda$. Carrying out a similar reasoning for the lower bound, we get
\begin{equation} \label{eq:bounds-exploration}
	\P \left( \beta \geq 4 \lambda y \right) - e^{-\underline \mu y} \leq \P \left( \Scal(\beta) \geq y \right) \leq \P \left( \beta \geq \lambda y \right) + e^{-\overline \mu y}
\end{equation}
with $\underline \mu = (2 \log 2 - 1) \lambda$.

\section{Notation and preliminary remarks} \label{sec:notation}

\subsection{Additional notation and preliminary remarks} \label{sub:remarks+notation}

We will write in the sequel $\P_x$, $\P$, $\P_x^n$, $\P^n$ for $\P_{\delta_x}$, $\P_\zero$, $\P_{\delta_x}^n$ and $\P_\zero^n$, respectively, and denote by $\E_x, \E$, etc, the corresponding expectations. Remember that we will also use $\P$ and $\E$ to denote the probability and expectation of other generic random variables (such as when we write $\E(J)$). In the sequel it will be convenient to consider some arbitrary probability measure $\Pbf$ on $\Dcal([0,\infty),\Mcal)$ and to write $Y_n \Rightarrow^n Y$ to mean that the law of $Y_n$ under $\P^n$ converges weakly to the law of $Y$ under $\Pbf$ ($Y_n$ and $Y$ are measurable functions of the canonical process). When we will have proved the tightness of $\P^n$, then we will fix $\Pbf$ to be one of its accumulation points, but until then $\Pbf$ remains arbitrary. Let $M(\nu)$ for $\nu \in \Mcal$ be the mass of $\nu$, i.e., $M(\nu) = \nu([0,\infty))$. If $\phi: [0,\infty) \to [0,\infty)$ is continuous, we will denote by $f_\phi: \Mcal \to [0,\infty]$ the function defined for $\nu \in \Mcal$ by $f_\phi(\nu) = \int \phi \d \nu$.

We will need various local time processes at $0$. Let $\ell_t = \int_0^t \indicator{\pi(X_u) = 0} \d u$ denote the Lebesgue measure of the time spent by the price process at $0$. For discrete processes, i.e., under $\P^n$, we will also need the following local time processes at $0$ of $M \circ X$ and $\pi \circ X$:
\[ L^{n, M}_t = n \int_0^t \indicator{M(X_u) = 0} \d u \ \text{ and } \ L^{n, \pi}_t = n \int_0^t \indicator{\pi(X_u) = 0} \d u, \ t \geq 0, n \geq 1. \]

For the continuous processes that will arise as the limit of $\pi \circ X$ and $M \circ X$, we consider the operator $\Lcal$ acting on continuous functions $f: [0,\infty) \to [0,\infty)$ as follows:
\[ \Lcal(f)_t = \lim_{\varepsilon \downarrow 0} \left( \frac{1}{\varepsilon} \int_0^t \indicator{f(u) \leq \varepsilon} \d u \right), \ t \geq 0. \]

We will only consider $\Lcal$ applied at random processes equal in distribution to $\beta W$ for some $\beta > 0$, in which case this definition makes sense and indeed leads to a local time process at $0$. Note that for any $\beta > 0$ and any $f$ for which $\Lcal(f)$ is well-defined, we have $\Lcal(\beta f) = \beta^{-1} \Lcal(f)$. Moreover, according to Tanaka's formula the canonical semimartingale decomposition of $W$ is given by
\begin{equation} \label{eq:canonical-decomposition-W}
	W = \frac{1}{2} \Lcal(W) + \bar W
\end{equation}
where $\bar W$ is a standard Brownian motion.
\\

In the sequel we will repeatedly use the fact that the process $\pi \circ X$ under $\P^n$ (or $\P$) is regenerative at $0$, in the sense that successive excursions away from $0$ are i.i.d.. Note also that the time durations between successive excursions away from $0$ are also i.i.d., independent from the excursions, with common distribution the exponential random variable (with parameter $\lambda \P(J = 1) n^2$ under $\P^n$, and $\lambda \P(J=1)$ under $\P$).

Moreover, jumps of $\pi \circ X$ under $\P^n$ have size $1/n$, and so if $\pi \circ X$ under $\P^n$ converges weakly, then the limit must be almost surely continuous (see for instance Theorem~$13.4$ in~\cite{Billingsley99:0}).
\\

Let $\theta_t$ and $\sigma_t$ for $t \geq 0$ be the shift and stopping operators associated to $\pi \circ X$, i.e., $\theta_t = (\pi(X_{t + s}), s \geq 0)$ and $\sigma_t = (\pi(X_{s \wedge t}), s \geq 0)$. Since by the previous remark, accumulation points of $\pi \circ X$ under $\P^n$ are continuous, these operators are continuous in the following sense (see for instance~\cite[Lemma~$2.3$]{Lambert13:0}).

\begin{lemma} [Continuity of the shift and stopping operators] \label{lemma:continuity-shift+stopping}
	Consider some arbitrary random times $T^n, T \geq 0$. If $(\pi \circ X, T^n) \Rightarrow^n (\pi \circ X, T)$, then $(\theta_{T^n}, \sigma_{T^n}) \Rightarrow^n (\theta_T, \sigma_T)$.
\end{lemma}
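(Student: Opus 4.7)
The plan is to combine Skorohod's representation theorem with the continuity of the limit process $\pi \circ X$, which is the content of the remark just preceding the statement. By the assumed joint convergence, I would first realize the sequence $(\pi \circ X, T^n)$ on a common probability space on which convergence holds almost surely: writing $\pi_n$ and $\pi_\infty$ for the process $\pi \circ X$ under the pre-limit law and under $\Pbf$ respectively, I would arrange that $(\pi_n, T^n) \to (\pi_\infty, T)$ almost surely in $\Dcal \times [0,\infty)$.

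The crucial input is that $\pi_\infty$ has almost surely continuous sample paths, since under $\P^n$ the jumps of $\pi \circ X$ are of size $1/n$ (this is exactly the remark cited above). Skorohod convergence to a continuous limit upgrades to locally uniform convergence, so $\pi_n \to \pi_\infty$ locally uniformly almost surely. Since the target processes $\theta_T \pi_\infty = \pi_\infty(T + \cdot)$ and $\sigma_T \pi_\infty = \pi_\infty(\cdot \wedge T)$ are themselves continuous, it will be enough to establish locally uniform convergence of $\theta_{T^n} \pi_n$ and $\sigma_{T^n} \pi_n$ towards them.

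Concretely, I would fix $K < \infty$ and, on the full-measure event $\{T^n \to T\}$, estimate
\[ \sup_{s \in [0,K]} \lvert \pi_n(T^n + s) - \pi_\infty(T + s) \rvert \leq \sup_{u \in [0, T + K + 1]} \lvert \pi_n(u) - \pi_\infty(u) \rvert + \sup_{s \in [0,K]} \lvert \pi_\infty(T^n + s) - \pi_\infty(T + s) \rvert \]
for $n$ large enough that $T^n \leq T + 1$. The first term vanishes by the locally uniform convergence $\pi_n \to \pi_\infty$, and the second by uniform continuity of $\pi_\infty$ on the compact interval $[0, T + K + 1]$ combined with $T^n \to T$. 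The argument for $\sigma_{T^n} \pi_n$ is completely parallel, using $\lvert s \wedge T^n - s \wedge T \rvert \leq \lvert T^n - T \rvert$ in place of the shift.

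I do not expect a serious obstacle here; the only delicate ingredient is the continuity of the limit $\pi_\infty$, without which the shift operator would be genuinely discontinuous in the Skorohod topology and the statement of the lemma would fail. In the present setting this continuity is granted for free by the vanishing jump sizes of the pre-limit processes, which is precisely why the lemma concerns the scalar process $\pi \circ X$ rather than the full measure-valued process $X$.
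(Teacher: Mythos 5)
Your argument is correct. The paper does not actually prove this lemma; it simply cites Lambert, Simatos and Zwart~\cite[Lemma~2.3]{Lambert13:0}, and your Skorohod-representation argument --- using that Skorohod convergence to a continuous limit is locally uniform, and that the continuity of $\pi_\infty$ makes the shift and stopping maps continuous at $(\pi_\infty,T)$ --- is precisely the standard proof of that cited fact, so you are in effect supplying the proof the paper delegates to the reference.
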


We will finally need various random times. For $t$ and $\varepsilon \geq 0$ let
\[ G_t = \sup \left\{ s \leq t: \pi(X_s) = 0 \right\}, \ D_t = \inf\left\{ s \geq t: \pi(X_s) = 0 \right\} \]
and
\[ D_{t, \varepsilon} = \inf\left\{ s \geq t: \pi(X_s) \leq \varepsilon \right\}. \]

Note that $G_t$ and $D_t$ are the endpoints of the excursion of $\pi \circ X$ straddling $t$, where we say that an excursion straddles $t$ if its endpoints $g \leq d$ satisfy $g \leq t \leq d$. For $0 \leq a \leq b$ we also define $T_b = \inf \{ s \geq 0: \pi(X_s) \geq b \}$ and
\begin{equation} \label{eq:def-g-d}
	g_{a, b} = \sup \left\{ s \leq T_b: \pi(X_s) = a \right\}, \ d_{a,b} = \inf \left\{ s \geq T_b: \pi(X_s) = a \right\}
\end{equation}
and
\begin{equation} \label{eq:def-U}
	U_{a,b} = d_{a,b} - g_{a,b},
\end{equation}
so that $g_{a,b} \leq d_{a,b}$ are the endpoints of the first excursion of $\pi \circ X$ above level $a$ with height $\geq b-a$ and $U_{a,b}$ is its length. Note that, in terms of trees, the interval $[g_{a,b}, d_{a,b}]$ corresponds to the exploration of a tree distributed like $\Tree_a$ conditioned on $\psi^*(\Tree_a) > b$, since the height of the excursion corresponds to the largest label in the ambient tree. Also, it follows from the discussion at the end of Section~\ref{sec:coupling} that $U_{a,b}$ is equal in distribution to $\Scal(\tau(\Tree_a))$ under the same conditioning.

\subsection{An aside on the convergence of random times} \label{sub:aside-hitting-times}

At several places in the proof of Theorem~\ref{thm:main} it will be crucial to control the convergence of some specific random times. For instance, we will need to show in the fourth step of the proof that if $(X, \pi \circ X) \Rightarrow^n (X, \pi \circ X)$, then $D_t \Rightarrow^n D_t$ for any $t \geq 0$. Let us explain why, in order to show that $D_t \Rightarrow^n D_t$, it is enough to show that for any $\eta > 0$,
\begin{equation} \label{eq:hitting-times}
	\limsup_{n \to +\infty} \P^n \left( D_t - D_{t, \varepsilon} \geq \eta \right) \mathop{\longrightarrow}_{\varepsilon \to 0} 0.
\end{equation}

Let us say that $\pi \circ X$ \emph{goes across} $\varepsilon$ if $\inf_{[D_{t, \varepsilon}, D_{t, \varepsilon} + \eta]} \pi \circ X < \varepsilon$ for every $\eta > 0$, and let $\Gcal = \{ \varepsilon > 0: \pi \circ X \text{ goes across } \varepsilon \}$. Then, the following property holds (see for instance Proposition~VI.$2$.$11$ in~\cite{Jacod03:0} or Lemma~$3.1$ in~\cite{Lambert15:0}): if $\Pbf(\varepsilon \in \Gcal) = 1$, then $D_{t, \varepsilon} \Rightarrow^n D_{t, \varepsilon}$.

On the other hand, the complement $\Gcal^c$ of $\Gcal$ is precisely the set of discontinuities of the process $(D_{t, \varepsilon}, \varepsilon > 0)$. Since $(D_{t, \varepsilon}, \varepsilon > 0)$ is c\`agl\`ad because it is the left-continuous inverse of the process $(\inf_{[t,t+s]} \pi \circ X, s \geq 0)$, the set $\{ \varepsilon > 0: \Pbf(\varepsilon \in \Gcal^c) > 0\}$ is at most countable, see for instance~\cite[Section $13$]{Billingsley99:0}. Gathering these two observations, we see that the convergence $D_{t, \varepsilon} \Rightarrow^n D_{t, \varepsilon}$ holds for all $\varepsilon > 0$ outside a countable set. Then, writing for any $\varepsilon, \eta > 0$
\[ \P^n \left( D_t \geq x \right) = \P^n \left( D_t \geq x, D_t - D_{t, \varepsilon} \geq \eta \right) + \P^n \left( D_t \geq x, D_t - D_{t, \varepsilon} < \eta \right) \]
gives
\[ \P^n \left( D_t \geq x \right) \leq \P^n \left( D_t - D_{t, \varepsilon} \geq \eta \right) + \P^n \left( D_{t, \varepsilon} \geq x - \eta \right). \]
Since $D_{t, \varepsilon} \Rightarrow^n D_{t, \varepsilon}$ for all $\varepsilon$ outside a countable set, and since for those $\varepsilon$ we have $\P^n \left( D_{t, \varepsilon} \geq x - \eta \right) \to \Pbf \left( D_{t, \varepsilon} \geq x - \eta \right)$ for all $\eta$'s outside a countable set, we obtain for all $\varepsilon, \eta > 0$ outside a countable set
\[ \limsup_{n \to +\infty} \P^n \left( D_t \geq x \right) \leq \limsup_{n \to +\infty} \P^n \left( D_t - D_{t, \varepsilon} \geq \eta \right) + \Pbf \left( D_{t, \varepsilon} \geq x - \eta \right). \]

Next, observe that $D_{t, \varepsilon} \to D_t$ as $\varepsilon \to 0$, $\Pbf$-almost surely. Indeed, $D_{t, \varepsilon}$ decreases as $\varepsilon \downarrow 0$, and its limit $D'$ must satisfy $t \leq D' \leq D_t$ because $t \leq D_{t, \varepsilon} \leq D_t$, and also $\pi(X_{D'}) = 0$ because $\pi(X_{D_{t, \varepsilon}}) \leq \varepsilon$ and $\pi \circ X$ is $\Pbf$-almost surely continuous. Thus letting first $\varepsilon \to 0$ and then $\eta \to 0$ in the previous display, we obtain by~\eqref{eq:hitting-times}
\[ \limsup_{n \to +\infty} \P^n \left( D_t \geq x \right) \leq \Pbf \left( D_t \geq x \right) \]
which shows that $D_t \Rightarrow^n D_t$ by the Portmanteau theorem. This reasoning, detailed for $D_t$ and used in the proof of Lemma~\ref{lemma:D}, will also be used in Section~\ref{sub:R} to control the asymptotic behavior of $T_b$, $g_{a,b}$ and $d_{a,b}$.
\\

We will also use the following useful property: if $\pi \circ X$ and $D_t$ converge weakly, then the convergence actually holds jointly. The reasoning goes as follows. If $\pi \circ X$ and $D_t$ under $\P^n$ converge to $\pi \circ X$ and $D_t$ under $\Pbf$, then $(\pi \circ X, D_t)$ under $\P^n$ is tight (we always consider the product topology). Let $(P', D')$ be any accumulation point.

Since projections are continuous, $P'$ is equal in distribution to $\pi \circ X$ under $\Pbf$, in particular it is almost surely continuous, and $D'$ is equal in distribution to $D_t$ under $\Pbf$, in particular it is almost surely ${\geq} t$. Further, assume using Skorohod's representation theorem that $(P^n, D^n_t)$ is a version of $(\pi \circ X, D_t)$ under $\P^n$ which converges almost surely to $(P', D')$. Since $P^n_{D^n_t} = 0$ and $P'$ is continuous, we get $P'_{D'} = 0$ and thus, since $D' \geq t$, $\inf\{s \geq t: P'_s = 0\} \leq D'$. Since these two random variables are both equal in distribution to $D_t$ under $\Pbf$, they must be (almost surely) equal. This shows that $(P', D')$ is equal in distribution to $(\pi \circ X, D_t)$ under $\Pbf$, which uniquely identifies accumulation points.

This reasoning applies to all the random times considered in this paper, in particular to $T_b$, $g_{a,b}$ and $d_{a,b}$. Thus, once we will have shown the convergence of $\pi \circ X$ and, say, $T_b$, then we will typically be in position to use Lemma~\ref{lemma:continuity-shift+stopping} and deduce the convergence of $\theta_{T_b}$ and $\sigma_{T_b}$.

\subsection{Convention} In the sequel we will need to derive numerous upper and lower bounds, where only the asymptotic behavior up to a multiplicative constant matters. It will therefore be convenient to denote by $C$ a strictly positive and finite constant that may change from line to line, and even within the same line, but which is only allowed to depend on $\lambda$ and the law of $J$.

\section{Proof of Theorem~\ref{thm:main}} \label{sec:proof}

We decompose the proof of Theorem~\ref{thm:main} into several steps. The coupling of Theorem~\ref{thm:coupling} makes it possible to translate many questions on $\P^n$ to questions on $B(\Tree_1)$, and in order to keep the focus of the proof on $\P^n$, we postpone to the Appendix~\ref{appendix} the proofs of the various results on $B(\Tree_1)$ which we need along the way.

At a high level, it is useful to keep in mind that, since $\E(J) > 0$, the law of large numbers prevails and the approximation $\psi(v, \Tree_1) \approx \E(J) \lvert v \rvert$ describes accurately enough (for our purposes) the labels in the tree $B(\Tree_1)$. In some sense, most of the randomness of $B(\Tree_1)$ lies in its genealogical structure, and the results of the Appendix~\ref{appendix} aim at justifying this approximation.

Note that similar results than the ones we need here are known in a more general setting, but for the tree without the barrier, i.e., for $\Tree_1$ instead of $B(\Tree_1)$, see, e.g.,~\cite{Durrett91:0} and~\cite{Kesten94:0}.
\\

We begin with a preliminary lemma: recall that $W$ is a reflected Brownian motion, that $\alpha = (2\lambda)^{1/2}$ and that $\Lcal(\beta W) = \beta^{-1} \Lcal(W)$ for any $\beta > 0$.

\begin{lemma} \label{lemma:preliminary}
	As $n \to +\infty$, $(M \circ X, L^{n,M}, L^{n,\pi})$ under $\P^n$ converges weakly to
	\begin{equation} \label{eq:joint-law}
		\left( \alpha W, \Lcal( \alpha W), \Lcal(\alpha \E(J) W) \right).
	\end{equation}
	
	Moreover,
	\begin{equation} \label{eq:bound-M/M/1}
		\sup \, \left\{ \varepsilon^{-1/2} \E^n_{\nu} \left( L^{n,M}_{\varepsilon} \right) : n \geq 1, 0 < \varepsilon < 1, \nu \in \Mcal_F \right\} < +\infty.
	\end{equation}
\end{lemma}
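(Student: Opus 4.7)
My plan is to split the three convergences into two pieces. The process $M \circ X$ under $\P$ is a symmetric M/M/1 queue with birth and death rates both $\lambda$ (the displacements $J$ do not enter its dynamics), reflected at $0$, so $(M^n, L^{n,M}) \Rightarrow (\alpha W, \Lcal(\alpha W))$ is the classical heavy-traffic diffusion limit at critical load. By contrast, $\pi \circ X$ has no autonomous semimartingale structure, so the convergence of $L^{n,\pi}$ is the delicate part; I would combine the regenerative structure of $\pi \circ X$ at $0$ with the tree coupling of Theorem~\ref{thm:coupling} and the Appendix~\ref{appendix} estimates on $B(\Tree_1)$.

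\textbf{M/M/1 part.} Applying the generator $\omega$ to the linear functional $\nu \mapsto M(\nu)$ yields under $\P$ the semimartingale decomposition $M_t = M_0 + \mathscr{M}_t + \lambda \int_0^t \indicator{M_s = 0} \d s$, where $\mathscr{M}$ is the difference of two independent compensated Poisson$(\lambda)$ processes. Under the $n$-rescaling this reads $M^n_t = M^n_0 + \mathscr{M}^n_t + \lambda L^{n,M}_t$. The martingale FCLT gives $\mathscr{M}^n \Rightarrow \alpha \bar W$ with $\bar W$ a standard Brownian motion (Lévy characteristic $2\lambda$ per unit time), and continuity of the Skorohod reflection map together with Tanaka's identity $\alpha W = \alpha \bar W + \lambda \Lcal(\alpha W)$ (which uses $\alpha^2 = 2\lambda$ and $\Lcal(\alpha W) = \alpha^{-1}\Lcal(W)$) identifies the joint limit as $(\alpha W, \Lcal(\alpha W))$. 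The uniform bound reduces by a monotone coupling to the case $\nu = \zero$; taking expectations in the above decomposition gives $\lambda \E^n_\zero(L^{n,M}_\varepsilon) = \E^n_\zero(M^n_\varepsilon)$, which is bounded by $C\sqrt{\varepsilon}$ via Doob applied to $\mathscr{M}^n$ (whose quadratic variation grows linearly at rate $2\lambda$).

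\textbf{Convergence of $L^{n,\pi}$.} The regenerative structure of $\pi \circ X$ at $0$ gives $L^{n,\pi}_t = n \sum_{k=1}^{K^n_t} \xi_k^{(n)} + O(1/n)$, where $\xi_k^{(n)}$ are i.i.d.\ $\exp(\lambda \P(J=1) n^2)$ variables and $K^n_t$ is the number of completed $\pi$-excursions by time $t$. By concentration of the exponential sum, the task reduces to identifying the limit of $K^n_t/n$ jointly with $(M^n, L^{n,M})$. The tree coupling of Theorem~\ref{thm:coupling} represents each pi-excursion as the exploration of an ambient tree $\Tree_1$ with exploration time $\Scal(\tau(\Tree_1))$; the Appendix estimates on $\lvert B(\Tree_1) \rvert$, $\lvert \Kcal(\Tree_1) \rvert$ and on the tail of the excursion length provide enough information to show that $K^n_t/n$ converges to $\lambda \P(J=1) \E(J)^{-1} \Lcal(\alpha W)_t$, and hence $L^{n,\pi}_t \Rightarrow \E(J)^{-1} \Lcal(\alpha W) = \Lcal(\alpha \E(J) W)$, jointly with the M/M/1 limits above.

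\textbf{Main obstacle.} The last step is the hardest. Because the underlying Galton Watson tree is critical, $\lvert B(\Tree_1) \rvert$ is heavy-tailed without a finite mean, so elementary renewal theory cannot be applied directly to the $\pi$-excursions; one must combine the Appendix tail estimates with a careful truncation/coupling argument, and identify the scaling constant $\E(J)$ through an identity relating the killed-node count $\lvert \Kcal(\Tree_1) \rvert$ to the drift of the branching random walk. Getting this constant right (rather than, for example, $\P(J=1)$) is the whole content of step 3 and is what ultimately ties the local time of $\pi$ to the drift $\E(J)$.
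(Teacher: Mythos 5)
Your treatment of the M/M/1 part and of the bound~\eqref{eq:bound-M/M/1} is essentially the paper's: the semimartingale decomposition $M_t = M_0 + \mathscr{M}_t + \lambda L^{n,M}_t$, the martingale FCLT, Tanaka's identity, monotone coupling to reduce the bound to $\nu = \zero$, and the $L^2$ estimate via the martingale. For the convergence of $L^{n,\pi}$, however, you propose a direct renewal argument, counting the number $K^n_t$ of completed $\pi$-excursions and identifying $K^n_t/n$, and you yourself flag the obstruction: the excursion durations of $\pi \circ X$ are heavy-tailed with $\P(V>y) \asymp y^{-1/2}$, inherited from the total progeny of a critical Galton--Watson tree, so there is no ordinary law of large numbers for $K^n_t$, and joint convergence of $K^n_t/n$ with $(M\circ X, L^{n,M})$ would require a Mittag--Leffler/stable-subordinator style analysis. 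The ``careful truncation/coupling argument'' you gesture at is exactly the missing content, so as written the proposal has a genuine gap at its hardest step.

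The paper sidesteps the heavy-tailed renewal problem entirely, and the mechanism is worth internalizing. Since $M(X_u)=0$ forces $\pi(X_u)=0$, under $\P^n$ one has $L^{n,M}_t/L^{n,\pi}_t = \int_0^{n^2t}\indicator{M(X_u)=0}\d u \big/ \ell_{n^2 t}$ under $\P$, so the entire question reduces to showing that this ratio converges almost surely to $\E(J)$. The key idea is the time change $Q = M\circ X \circ \ell^{-1}$, which runs $M\circ X$ on the clock of time spent at $\pi = 0$. Using the coupling with $\Tree_1$, $Q$ is identified as a $G/M/1$ queue: single arrivals at rate $\lambda\P(J\leq 0)$, batch arrivals of size $\lvert\Kcal(\Tree_1)\rvert$ at rate $\lambda\P(J=1)$, and exponential services at rate $\lambda$. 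Its load is $\P(J\leq 0) + \P(J=1)\,\E(\lvert\Kcal(\Tree_1)\rvert) = 1 - \E(J) < 1$ by~\eqref{eq:mean-killed}, so $Q$ is positive recurrent and its long-run idle fraction is $\E(J)$ by ergodicity. The change of variables $\int_0^y\indicator{M(X_u)=0}\d u = \int_0^{\ell_y}\indicator{Q_u=0}\d u$ then transfers this to the desired almost sure limit, with no asymptotics for heavy-tailed excursion counts required; the functional convergence of $L^{n,\pi}$ follows from f.d.d.\ convergence of increasing continuous processes (Jacod--Shiryaev~VI.$2$.$15$). Identifying and exploiting the auxiliary queue $Q$ is the crucial step your outline lacks.
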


\begin{proof}
	By definition, $M \circ X$ under $\P$ is a critical $M/M/1$ queue with input rate $\lambda$, which is well-known to converge under $\P^n$ to $\alpha W$. Further, $\lambda L^{n,M}$ is the finite variation process that appears in its canonical (semimartingale) decomposition, and standard arguments show that it converges, jointly with $M \circ X$, to the finite variation process that appears in the canonical decomposition of $\alpha W$, equal to $(\alpha/2) \Lcal(W)$ by~\eqref{eq:canonical-decomposition-W}. Dividing by $\lambda$ we see that $L^{n,M}$ under $\P^n$ converges to $(\alpha / (2 \lambda)) \Lcal(W) = \Lcal(\alpha W)$. This shows that $(M \circ X, L^{n,M})$ under $\P^n$ converges weakly to $(\alpha W, \Lcal(\alpha W))$.
	\\
	
	We now show that $L^{n,\pi}$ under $\P^n$ converges weakly to $(1/\E(J)) \Lcal(\alpha W)$ jointly with $M \circ X$ and $L^{n,M}$. Since $L^{n,M}_t / L^{n,\pi}_t$ under $\P^n$ is equal to
	\[ \frac{1}{\ell_{n^2 t}} \int_0^{n^2 t} \indicator{M(X_u) = 0} \d u \]
	under $\P$, it is enough to show that $\int_0^y \indicator{M(X_u) = 0} \d u / \ell_y \to \E(J)$ as $y \to +\infty$, $\P$-almost surely. Indeed, this would imply that $L^{n,\pi}$ under $\P^n$ converges in the sense of finite-dimensional distributions to $(1/\E(J)) \Lcal(\alpha W)$ (jointly with $M \circ X$ and $L^{n,M}$), and so, since $L^{n, \pi}$ and $\Lcal(W)$ are continuous and increasing, Theorem~VI.$2$.$15$ in~\cite{Jacod03:0} would imply the desired functional convergence result.

	Let $Q = M \circ X \circ \ell^{-1}$, where $\ell^{-1}$ stands for the right-continuous inverse of $\ell$. The composition with $\ell^{-1}$ makes $Q$ evolve only when the price is at $0$. Under $\P$ and while the price is at $0$, the dynamic of $Q$ is as follows:
	\begin{itemize}
		\item $Q$ increases by one at rate $\lambda \P(J \leq 0)$ (which corresponds to an order with a displacement $\leq 0$ being added) and decreases by one at rate $\lambda$, provided $Q > 0$ (which corresponds to an order being removed);
		\item when an order with displacement $>0$ is added, which happens at rate $\lambda \P(J = 1)$, the price makes an excursion away from $0$. When it comes back to $0$, $Q$ resumes evolving and, by the coupling, a random number of orders distributed like $\lvert \Kcal(\Tree_1) \rvert$ and independent from everything else have been added at $0$.
	\end{itemize}
	
	Thus we see that $Q$ under $\P$ is stochastically equivalent to a $G/M/1$ single-server queue, with two independent Poisson flows of arrivals: customers arrive either one by one at rate $\lambda \P(J \leq 0)$, or by batch of size distributed according to $\lvert \Kcal(\Tree_1) \rvert$ at rate $\lambda \P(J = 1)$. Then, customers have i.i.d.\ service requirements following an exponential distribution with parameter $\lambda$. In particular, the load of this queue is $\P(J \leq 0) + \P(J=1) \E (\lvert \Kcal(\Tree_1) \rvert)$ which by~\eqref{eq:mean-killed} is equal to $1 - \E(J)$. Since $\E(J) > 0$, $Q$ is positive recurrent and in particular, the long-term average idle time is equal to one minus the load, i.e.,
	\begin{equation} \label{eq:G/M/1}
		\frac{1}{y} \int_0^y \indicator{Q_u = 0} \d u \mathop{\longrightarrow}_{y \to +\infty} \E(J), \ \P-\text{almost surely.}
	\end{equation}
	
	Fix on the other hand some $y > 0$: then
	\begin{align*}
		\int_0^y \indicator{M(X_u) = 0} \d u & = \int_0^y \indicator{M(X_u) = 0}\indicator{\pi(X_u) = 0} \d u\\
		& = \int_0^y \indicator{Q(\ell_u) = 0} \d \ell_u\\
		& = \int_0^{\ell_y} \indicator{Q_u = 0} \d u
	\end{align*}
	which, combined with~\eqref{eq:G/M/1}, proves that $\int_0^y \indicator{M(X_u) = 0} \d u / \ell_y \to \E(J)$ and achieves the proof of the convergence of $(M \circ X, L^{n,M}, L^{n,\pi})$.
	
	It remains to prove~\eqref{eq:bound-M/M/1}: since $M \circ X$ spends more time at $0$ when started empty (this can be easily seen with a coupling argument), we have $\E^n_\nu(L^{n,M}_\varepsilon) \leq \E^n(L^{n,M}_\varepsilon)$ which gives the uniformity in $\nu$. Further, since as mentioned previously $M \circ X - \lambda L^{n,M}$ is a martingale, we have $\E^n(L^{n,M}_\varepsilon) = \lambda^{-1} \E^n( M(X_\varepsilon))$. Since $M \circ X$ is a reflected critical random walk with jump size $\pm 1/n$ and jump rates $\lambda n^2$, one easily proves that $\E^n(M(X_\varepsilon)^2) \leq 2 \lambda \varepsilon$ which gives the desired result by Cauchy-Schwarz inequality.
\end{proof}

\mysubsection{tightness of $\P^n$}

To show the tightness of $\P^n$, it is enough to show that $M \circ X$ under $\P^n$ is tight, and that for each continuous $\phi$ which is infinitely differentiable with a compact support, $f_\phi \circ X$ under $\P^n$ is tight (recall that $f_\phi(\nu) = \int \phi \d \nu$), see for instance Theorem~$2.1$ in~\cite{Roelly-Coppoletta86:0}. The tightness of $M \circ X$ is a direct consequence of Lemma~\ref{lemma:preliminary}, and so it remains to show the tightness of $f_\phi \circ X$. First of all, note that jumps of $f_\phi \circ X$ under $\P^n$ are upper bounded by $\sup \lvert \phi \rvert / n$, and so we only need to control the oscillations of this process (see for instance the Corollary on page~$179$ in~\cite{Billingsley99:0}). Using standard arguments, we see that the process $f_\phi \circ X$ is a special semimartingale with canonical decomposition
\[ \int \phi \d X_t = \int \phi \d X_0 + \int_0^t \Omega_n(f_\phi)(X_u) \d u + Z^n_t, \]
where $\Omega_n$ is the generator of $\P^n$, given for any $\nu \in \Mcal^n_F$ and any function $f: \Mcal \to \R$ by
\begin{multline*}
	\Omega_n(f)(\nu) = \lambda n^2 \E \left[ f\left(\nu + n^{-1} \delta_{(\pi(\nu) + J/n)^+} \right) - f(\nu) \right]\\
	+ \lambda n^2 \left[ f\left(\nu - n^{-1} \delta_{\pi(\nu)} \right) - f(\nu) \right] \indicator{M(\nu) > 0},
\end{multline*}
and $Z^n$ is a local martingale with predictable quadratic variation process given by $\langle Z^n \rangle_t = \int_0^t \Omega^2_n(f_\phi)(X_u) \d u$, with
\begin{multline*}
	\Omega^2_n(f)(\nu) = \lambda n^2 \E \left[ \left( f\left(\nu + n^{-1} \delta_{(\pi(\nu) + J/n)^+} \right) - f(\nu) \right)^2 \right]\\
	+ \lambda n^2 \left[ f\left(\nu - n^{-1} \delta_{\pi(\nu)} \right) - f(\nu) \right]^2 \indicator{M(\nu) > 0}
\end{multline*}
(see, e.g., Lemma~VIII.$3.68$ in~\cite{Jacod03:0}). In particular, we have
\begin{align*}
	\Omega_n(f_\phi)(\nu) & = \lambda n \E \left[ \phi\left((\pi(\nu) + J/n)^+\right) \right] - \lambda n \phi(\pi(\nu)) \indicator{M(\nu) > 0}\\
	& = \lambda n \E \left[ \phi\left((\pi(\nu) + J/n)^+\right) - \phi(\pi(\nu)) \right] + \lambda n \phi(0) \indicator{M(\nu) = 0}
\end{align*}
and
\[ \Omega^2_n(f_\phi)(\nu) = \lambda \E \left[ \phi((\pi(\nu) + J/n)^+)^2\right]\\
- \lambda \phi(\pi(\nu))^2 \indicator{M(\nu) > 0}, \]
from which it follows that
\[ \left \lvert \Omega_n(f_\phi)(\nu) \right \rvert \leq j^* \lambda \sup \lvert \phi' \rvert + n \lambda \lvert \phi(0) \rvert \indicator{M(\nu) = 0} \ \text{ and } \ \left \lvert \Omega_n^2(f_\phi)(\nu) \right \rvert \leq 2 \lambda \sup \phi^2. \]

Thus there exists a finite constant $C'$, that only depends on $\lambda$, the law of $J$ and $\phi$, such that for any finite stopping time $V$ and any $\varepsilon > 0$ we have
\[ \E^n \left( \left \lvert \int_V^{V + \varepsilon} \Omega_n(f_\phi)(X_u) \d u \right \rvert \right) \leq C' \varepsilon + C' \E^n \left( L^{n,M}_{V + \varepsilon} - L^{n,M}_V \right) \leq C' \varepsilon + C' \varepsilon^{1/2}, \]
where the last inequality follows from~\eqref{eq:bound-M/M/1} combined with the strong Markov property at time $V$. Similarly, $\E^n \left( \left \lvert \langle Z^n \rangle_{V + \varepsilon} - \langle Z^n \rangle_V \right \rvert \right) \leq C' \varepsilon$ and these upper bounds imply the tightness of $f_\phi \circ X$ by standard arguments for the tightness of a sequence of semimartingales, see for instance Theorem~VI.$4$.$18$ in~\cite{Jacod03:0}, or Theorem~$2.3$ in~\cite{Roelly-Coppoletta86:0}.
\\

We now know that $\P^n$ is tight: it remains to identify accumulation points. As planned in Section~\ref{sub:remarks+notation}, we now let $\Pbf$ be an arbitrary accumulation point of $\P^n$ and we assume without loss of generality that $\P^n$ converges weakly to $\Pbf$. In particular, we have $f_\phi \circ X \Rightarrow^n f_\phi \circ X$ for every continuous function $\phi \geq 0$ with a compact support, see for instance Theorem~$16.16$ in~\cite{Kallenberg02:0}. Also, as noted in Section~\ref{sub:remarks+notation} the process $\pi \circ X$ under $\Pbf$ is almost surely continuous, and since the jumps of $f_\phi \circ X$ under $\P^n$ are upper bounded by $n^{-1} \sup \lvert \phi \rvert$, the same argument shows that the process $f_\phi \circ X$ under $\Pbf$ is also almost surely continuous.

\mysubsection{joint convergence}

We now show that $X$ under $\P^n$ actually converges jointly with its mass, price and local time processes. The proof is based on checking that some sample-path properties are satisfied, which makes it possible to use the continuous mapping theorem.

\begin{lemma} \label{lemma:joint-convergence}
	The following joint convergence holds:
	\[ \left( X, M \circ X, \pi \circ X, L^{n,M}, L^{n,\pi} \right) \Rightarrow^n \big( X, M \circ X, \pi \circ X, \Lcal(M \circ X), \Lcal(\E(J) M \circ X) \big). \]
	Moreover, it holds $\Pbf$-almost surely that $M(X_t) \geq \pi(X_t)$ for every $t \geq 0$.
\end{lemma}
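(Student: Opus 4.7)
The plan is to combine the tightness of $X$ from Step 0 with Lemma~\ref{lemma:preliminary}, and to identify the joint limit by exploiting a pathwise structural property of $X$ under $\P^n$ that compensates for the lack of continuity of the functionals $M$ and $\pi$ on $\Mcal$ in its vague topology.

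The structural property is: starting from $\zero$, an easy induction on the successive jump times shows that, whenever $M(X_t) > 0$, the support of $X_t$ is an arithmetic progression of step $1/n$ of the form $\{a, a + 1/n, \ldots, \pi(X_t)\}$ for some $a \in \{0, 1/n\}$, with multiplicity at least one at every point. Indeed, an addition places the new order at $(\pi + J/n)^+$ with $J \leq 1$, which either lies inside the existing support, equals $0$ (possibly enlarging the support downward), or extends the top by $1/n$ (when $J = 1$); a removal either preserves the support (multiplicity $\geq 2$ at the top) or removes the atom at $\pi$, the new top being the already-present $\pi - 1/n$. This yields the pointwise bounds
\begin{equation*}
M(X_t) \geq \pi(X_t), \quad X_t((y, \infty)) \geq \pi(X_t) - y - 1/n, \ y \geq 0, \quad \P^n\text{-a.s.}
\end{equation*}

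From $\pi \circ X \leq M \circ X$ pathwise, the tightness of $M \circ X$ (Lemma~\ref{lemma:preliminary}), and jumps of $\pi \circ X$ having magnitude at most $1/n$ (so any accumulation point is a.s.\ continuous), I would deduce joint tightness of $(X, M \circ X, \pi \circ X, L^{n,M}, L^{n,\pi})$. Along a subsequence this vector converges to a limit $(X, M^*, \pi^*, L^{*,M}, L^{*,\pi})$ under $\Pbf$, whose last three coordinates have the marginal law $(\alpha W, \Lcal(\alpha W), \Lcal(\alpha \E(J) W))$ by Lemma~\ref{lemma:preliminary}. The crux is then to identify $\pi^*$ with $\pi \circ X$ and $M^*$ with $M \circ X$ as functionals of the limiting measure-valued process $X$.

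For this identification, I would invoke Skorohod's representation theorem to work $\omega$-wise and fix $t \geq 0$. For any $y > \pi^*_t$, eventually $\pi(X^n_t) < y$, so every continuous compactly supported test function with support in $(y, \infty)$ integrates to zero against $X^n_t$ and hence against $X_t$ by vague convergence; this gives $X_t((\pi^*_t, \infty)) = 0$. Conversely, at any $y < \pi^*_t$ with $X_t(\{y\}) = 0$, passing the bound $X^n_t((y, \infty)) \geq \pi(X^n_t) - y - 1/n$ to the limit yields $X_t((y, \infty)) \geq \pi^*_t - y > 0$, so the support of $X_t$ reaches $\pi^*_t$. Combined, $\pi(X_t) = \pi^*_t$. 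Similarly, for any $R > \pi^*_t$ with $X_t(\{R\}) = 0$, eventually $X^n_t([0, R]) = M(X^n_t)$, and passing to the limit gives $X_t([0, R]) = M^*_t$, which equals $M(X_t)$ since $R$ exceeds the price of $X_t$. These identifications hold a.s.\ for each fixed $t$ and extend to all $t$ by continuity of $M^*$ and $\pi^*$. The local time limits are then $\Lcal(M \circ X)$ and $\Lcal(\E(J) M \circ X)$ via the identity $\Lcal(\beta f) = \beta^{-1} \Lcal(f)$. Finally, the inequality $M(X_t) \geq \pi(X_t)$ for every $t$ is immediate from its discrete counterpart and these identifications.
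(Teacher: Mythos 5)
Your proof is correct and takes essentially the same approach as the paper's: you establish the same structural property of the support (the paper phrases it as $X_t(\{p\}) \geq 1$ for every integer $p \leq \pi(X_t)$ whenever $M(X_t) > 0$), derive joint tightness from Lemma~\ref{lemma:preliminary} and Step~1, apply Skorohod representation, and then identify $\pi^*$ and $M^*$ with $\pi\circ X$ and $M\circ X$ using compactly supported test functions together with the lower bound on mass below the price. The only cosmetic difference is the order of identification ($\pi$ before $M$ in yours, $M$ before $\pi$ in the paper) and the precise form of the pointwise lower bound used ($X^n_t((y,\infty))\geq\pi(X^n_t)-y-1/n$ versus the paper's $X^{(n)}_t([a,b])\geq b-a$ for $a<b<\pi(X^{(n)}_t)$), which are equivalent formulations of the same fact.
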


\begin{proof}
	Lemma~\ref{lemma:preliminary} and the first step imply that the sequence $(X, M \circ X, L^{n,M}, L^{n,\pi})$ under $\P^n$ is tight. Let $(X', M', \Lcal(M'), \Lcal(\E(J) M'))$ be any accumulation point (which is necessarily of this form by Lemma~\ref{lemma:preliminary}), and assume in the rest of the proof, using Skorohod's representation theorem, that $X^{(n)}$ is a version of $X$ under $\P^n$ and that $L^{(n),M}$ and $L^{(n),\pi}$ are defined in terms of $X^{(n)}$ similarly as $L^{n,M}$ and $L^{n,\pi}$ are defined in terms of $X$, such that $(X^{(n)}, M \circ X^{(n)}, L^{(n),M}, L^{(n),\pi}) \to (X', M', \Lcal(M'), \Lcal(\E(J) M'))$ almost surely. Then, in order to prove the joint convergence, we only have to prove that $M' = M \circ X'$ and that $\pi(X^{(n)}_t) \to \pi(X'_t)$ for every $t \geq 0$.
	
	 We will use the following key observation: under $\P$ and provided that $M(X_t) > 0$, we have $X_t(\{p\}) \geq 1$ for any integer $p \leq \pi(X_t)$. Indeed, this is a consequence of our assumption that $J$ is integer-valued with $J \leq 1$ and can be proved by induction. It follows that, $\P^n$-almost surely, $\pi(X_t) \leq M(X_t)$ for every $t \geq 0$ and so $\pi(X^{(n)}_t) \leq M(X^{(n)}_t)$. Note that this implies the desired inequality $\pi(X_t) \leq M(X_t)$ under $\Pbf$, once we will have proved that $M' = M \circ X$ and that $\pi(X^{(n)}_t) \to \pi(X'_t)$.
	
	So fix some $t \geq 0$ and let us first show that $M'_t = M(X'_t)$. Let $K \geq M'_t + 1$ and $\phi$ be any decreasing, continuous function with $\phi(x) = 1$ for $x \leq K$ and $\phi(x) = 0$ for $x \geq K+1$, so that $\int \phi \d X^{(n)}_t \to \int \phi \d X'_t$ as $n \to +\infty$. On the other hand, since $M(X^{(n)}_t) \to M'_t$ we have $M(X^{(n)}_t) \leq M'_t+1$, and in particular $\pi(X^{(n)}_t) \leq K$, for $n$ large enough. Since $\phi(x) = 1$ for $x \leq K$, we have $\int \phi \d X^{(n)}_t = M(X^{(n)}_t)$ for those $n$, and since the left-hand side of this equality converges to $\int \phi \d X'_t$ while the right-hand side converges to $M'_t$, we obtain $\int \phi \d X'_t = M'_t$. Letting $K \to +\infty$ we obtain $M(X'_t) = M'_t$.
		
	Let us now prove that $\pi(X^{(n)}_t) \to \pi(X'_t)$. First, note that since $\pi(X^{(n)}_t) \leq M(X^{(n)}_t)$, the sequence $(\pi(X^{(n)}_t), n \geq 1)$ is bounded and any accumulation point is upper bounded by $M(X'_t)$. Consider any such accumulation point $p$, and assume without loss of generality that $\pi(X^{(n)}_t) \to p \leq M(X'_t)$: we have to show that $p = \pi(X'_t)$. Fix some $b > a > p$ and let $\phi$ be any continuous function with compact support in $[a,b]$: then $\int \phi \d X^{(n)}_t \to \int \phi \d X'_t$ and since $\pi(X^{(n)}_t) \to p$ and $a > p$, we have $\int \phi \d X^{(n)}_t = 0$ for $n$ large enough. Thus $\int \phi \d X'_t = 0$ which shows that $\pi(X'_t) \leq p$. To show the reverse inequality, consider $a < b < p$ such that $X^{(n)}_t([a,b]) \to X'_t([a,b])$ (this holds for every $a < b < p$ outside a countable set). Then for $n$ large enough we have $a < b < \pi(X^{(n)}_t)$ and so a consequence of the key observation made at the beginning of the proof is that $X^{(n)}_t([a,b]) \geq b-a$. This shows that $X'_t([a,b]) \geq b-a$ and so $\pi(X'_t) \geq a$. Letting $a \uparrow p$ achieves the proof.
\end{proof}

Since under $\P^n$, orders are added at a distance at most $j^*/n$ from the current price, it follows readily from the previous result that $X$ under $\Pbf$ only evolves locally around its price, in the sense that if $y \geq 0$ and $0 \leq g \leq d$ are such that $\pi(X_t) > y$ for $g \leq t \leq d$, then for $t \in [g,d]$ the measures $X_t$ and $X_g$ restricted to $[0,y]$ are equal. Actually we will only need the following weaker property.

\begin{corollary} \label{cor:local-evolution}
	The following property holds $\Pbf$-almost surely. Let $t, y \geq 0$ such that $\pi(X_t) > y$, and let $g$ be the left endpoint of the excursion of $\pi \circ X$ above $y$ straddling $t$. Then $X_t([0,y]) = X_g([0,y])$.
\end{corollary}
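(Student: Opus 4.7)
The plan is to lift to $\Pbf$ a trivial prelimit local-evolution observation. Under $\P^n$ with $J \in \{-j^*, \ldots, 1\}$, whenever $\pi(X^{(n)}_{s-}) > y + j^*/n$, an added order lands at $(\pi(X^{(n)}_{s-}) + J/n)^+ \geq \pi(X^{(n)}_{s-}) - j^*/n > y$, while a removed order is at the price $\pi(X^{(n)}_{s-}) > y$; consequently on any time interval on which $\pi \circ X^{(n)}$ stays above $y + j^*/n$, the restriction of $X^{(n)}_\cdot$ to $[0,y]$ is frozen.

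To transfer this to the limit, I apply Skorohod's representation to the joint convergence of Lemma~\ref{lemma:joint-convergence} and work on a probability space where $X^{(n)} \to X$ almost surely, with $\pi \circ X^{(n)} \to \pi \circ X$ uniformly on compacts (the latter because the limit is continuous). Fix such a realization together with $t, y$ and the endpoints $g < d$ of the excursion of $\pi \circ X$ above $y$ straddling $t$. Continuity of $\pi \circ X$ forces $\pi(X_g) = \pi(X_d) = y$ and $\pi(X_s) > y$ strictly on $(g, d)$. Given $\varepsilon \in (0, t-g)$, compactness yields $\inf_{[g+\varepsilon, t]} \pi(X_s) > y$, and uniform convergence upgrades this to $\inf_{[g+\varepsilon, t]} \pi(X^{(n)}) > y + j^*/n$ for all $n$ large enough. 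Applying the prelimit observation on $[g+\varepsilon, t]$ then yields $\int \phi \, \d X^{(n)}_t = \int \phi \, \d X^{(n)}_{g+\varepsilon}$ for every continuous $\phi \geq 0$ with support in $[0, y]$.

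Passing $n \to \infty$ is legitimate because the tightness step shows $\int \phi \, \d X^{(n)} \to \int \phi \, \d X$ with a continuous limit, hence uniformly on compacts under the Skorohod coupling; then letting $\varepsilon \downarrow 0$ and invoking the weak continuity of $s \mapsto X_s$ (inherited from the uniform smallness of the jumps of $X^{(n)}$), I obtain $\int \phi \, \d X_t = \int \phi \, \d X_g$ for every such $\phi$. Monotone approximation of $\mathbbm{1}_{[0, y')}$, for any $y' < y$, from below by these test functions, followed by $y' \uparrow y$, gives agreement of $X_t$ and $X_g$ on Borel subsets of $[0, y)$; the claimed equality $X_t([0,y]) = X_g([0,y])$ then follows. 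The main technical point I anticipate is the careful interchange of the limits $n \to \infty$ and $\varepsilon \downarrow 0$, together with the minor subtlety of extending the equality across the boundary point $y$; both rest on the continuity properties of the limiting measure-valued process.
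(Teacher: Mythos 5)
Your strategy is the one the paper tacitly relies on (the corollary is simply asserted as ``it follows readily from the previous result''): freeze the pre-limit book below $y$ whenever $\pi \circ X^{(n)}$ stays above $y + j^*/n$, Skorohod-couple via Lemma~\ref{lemma:joint-convergence}, squeeze on $[g+\varepsilon, t]$, pass $n \to \infty$ and then $\varepsilon \downarrow 0$. This is correct through the conclusion that $X_t(A) = X_g(A)$ for every Borel $A \subset [0,y)$.

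The gap is exactly the ``minor subtlety'' you flag and then defer. Continuous $\phi$ supported in $[0,y]$ necessarily vanish at $y$, so your approximation step only reaches $[0,y)$; the stated closed-interval equality additionally requires $X_t(\{y\}) = X_g(\{y\})$. The appeal to ``continuity properties of the limiting measure-valued process'' does not close this: weak continuity of $s \mapsto X_s$ yields $X_{g+\varepsilon} \to X_g$ weakly, but $X_{g+\varepsilon}([0,y]) \to X_g([0,y])$ then requires $X_g(\{y\}) = 0$, i.e.\ non-atomicity of the limit at its own price $\pi(X_g) = y$ --- which is essentially part of what Theorem~\ref{thm:main} ultimately establishes and is not available at the stage this corollary is needed. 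Pushed through Portmanteau, your argument pins down only $X_g([0,y)) \leq X_t([0,y]) \leq X_g([0,y]) = M(X_g)$, still off by the possible atom $X_g(\{y\})$. To be fair, the paper glosses over the same point: the paragraph preceding the corollary states the genuinely immediate version (restrictions to $[0,y]$ agree when $\pi \circ X > y$ on a \emph{closed} interval $[g,d]$, since then they already agree on $[0,y+\delta)$ for some $\delta>0$), and it is the downgrade to the corollary --- where $\pi(X_g) = y$ sits exactly on the boundary --- that leaves the atom at $y$ unaddressed; you inherit that gap rather than resolving it.
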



\mysubsection{$\Pbf$ is regenerative at $0$} \label{sub:proof-regeneration-at-0}

So far, we have used the fact that $\pi \circ X$ under $\P^n$ was regenerative at $0$, in the natural sense that successive excursions away from $0$ are i.i.d.. Under $\Pbf$ there is no first excursion away from $0$ and so we need a more general notion of regeneration (we also don't know, at this point, that $\pi \circ X$ under $\Pbf$ is a Markov process). The goal of this step is to show that $\pi \circ X$ under $\Pbf$ is regenerative at $0$ in the following sense (recall that $\theta_t$ and $\sigma_t$ are the shift and stopping operators associated to $\pi \circ X$, see Section~\ref{sub:remarks+notation}),:
\begin{enumerate}[label=\roman*)]
	\item \label{prop:zero-set} the zero set of $\pi \circ X$ has zero Lebesgue measure under $\Pbf$;
	\item \label{prop:D'} $\Pbf(D'_0 = 0) = 1$, where $D'_0 = \inf\left\{ t > 0: \pi(X_t) = 0 \right\}$;
	\item \label{prop:reg} for every $t \geq 0$ and every continuous, bounded functions $f$ and $g$ on~$\Ecal$:
	\begin{equation} \label{eq:def-continuous-regeneration}
		\Ebf \left[ f(\sigma_{D_t}) g(\theta_{D_t}) \right] = \Ebf \left[ f(\sigma_{D_t}) \right] \Ebf \left[ g(\pi \circ X) \right];
	\end{equation}
\end{enumerate}
Note for~\eqref{eq:def-continuous-regeneration} that $D_t$ is $\Pbf$-almost surely finite (because Lemma~\ref{lemma:joint-convergence} implies that $D_t \leq \inf\{s \geq t: M(X_s) = 0\}$), and this upper bound is $\Pbf$-almost surely finite by Lemma~\ref{lemma:preliminary}. It can be checked, following the proof of Theorem~$22.11$ in~\cite{Kallenberg02:0}, that if $\pi \circ X$ under $\Pbf$ satisfies the three properties~\ref{prop:zero-set}--\ref{prop:reg} above, then $\pi \circ X$ admits an excursion measure away from~$0$, denoted by $\Ncal$, by which we mean, in accordance with the literature, that:
\begin{enumerate}
	\item there is a continuous, nondecreasing process~$L$ increasing only on the zero set of~$\pi \circ X$ (the local time);
	\item the right-inverse~$L^{-1}$ of~$L$ is a subordinator and the excursion process sending~$t$ to the corresponding excursion if~$L^{-1}$ jumps at time~$t$ (and to a cemetery point otherwise) is a Poisson point process with intensity measure $\d \textrm{m} \d \Ncal$ ($\textrm{m}$ stands for the Lebesgue measure).
\end{enumerate}

The rest of this step is devoted to showing that $\Pbf$ satisfies the properties~\ref{prop:zero-set}--\ref{prop:reg} above. We begin with a preliminary lemma.

\begin{lemma} \label{lemma:control-mean-local-time}
	For any $n \geq 1$ and $t \geq 0$, $\E^n(L^{n,\pi}_t) \leq C t^{1/2}$.
\end{lemma}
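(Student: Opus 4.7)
The natural first move is to remove the scaling: by a direct change of variables, $L^{n,\pi}_t$ under $\P^n$ has the same law as $n^{-1} \ell_{n^2 t}$ under $\P$, where $\ell_y = \int_0^y \indicator{\pi(X_u) = 0} \d u$. The claim is therefore equivalent to the unrescaled estimate $\E(\ell_y) \leq C y^{1/2}$ for every $y \geq 0$, which is what I would aim to prove.

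I would then exploit the regenerative structure of $\pi \circ X$ at $0$ under $\P$: sample paths decompose into an alternating sequence of i.i.d.\ idle periods $(E_k)_{k \geq 1}$ of exponential length with parameter $\lambda \P(J = 1)$ (the only way to leave $0$ is an arrival with $J = 1$) and i.i.d.\ excursion durations $(T_k)_{k \geq 1}$. Letting $N_y$ denote the number of full cycles contained in $[0,y]$, one has $\ell_y \leq E_1 + \cdots + E_{N_y + 1}$, with $N_y + 1$ a stopping time for the filtration generated by the cycles. Wald's identity then gives
\[
\E(\ell_y) \leq C \bigl(1 + \E(N_y)\bigr),
\]
and the remaining task is to show $\E(N_y) \leq C y^{1/2}$.

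Dominating $N_y$ by the pure renewal counter $\hat N_y := \max\{k : T_1 + \cdots + T_k \leq y\}$ and using $\{T_1 + \cdots + T_k \leq y\} \subset \{T_i \leq y,\, i = 1, \ldots, k\}$, I obtain
\[
\E(\hat N_y) = \sum_{k \geq 1} \P(T_1 + \cdots + T_k \leq y) \leq \sum_{k \geq 1} \P(T_1 \leq y)^k \leq \frac{1}{\P(T_1 > y)}.
\]
Everything therefore boils down to the lower-tail bound $\P(T_1 > y) \geq c\, y^{-1/2}$ for large $y$. Via Theorem~\ref{thm:coupling}, $T_1$ has the law of $\Scal(\tau(\Tree_1))$, and combining $\tau(\Tree_1) \geq |B(\Tree_1)| - 1$ (from~\eqref{eq:formula-tau} together with $|\Kcal(\Tree_1)| \leq |B(\Tree_1)|$) with the lower bound in~\eqref{eq:bounds-exploration} transports the requirement onto the tree: it suffices that $\P(|B(\Tree_1)| > m) \geq c'\, m^{-1/2}$ for large $m$. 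This heavy-tail lower bound on the size of the killed tree $B(\Tree_1)$ is the main and only delicate ingredient; morally it reflects the fact that the critical Galton--Watson tree with geometric$(1/2)$ offspring has total progeny with tail $\sim c/\sqrt{m}$ and that, because $\E(J) > 0$, a positive proportion of the tree survives the barrier. I would extract it from Appendix~\ref{appendix}; the small-$y$ regime is covered trivially by $\ell_y \leq y$.
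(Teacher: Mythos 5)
Your proof is correct and follows essentially the same route as the paper's: rescale, decompose $\ell_y$ over regeneration cycles into idle periods $E_k$ and excursion durations, bound the cycle count by the renewal counter built from excursion lengths alone, use $\E(\hat N_y) \leq 1/\P(T_1 > y)$, and then pass via the coupling and~\eqref{eq:bounds-exploration} to a polynomial lower-tail bound on the tree. The only cosmetic differences are that the paper avoids invoking Wald's identity by dominating the cycle count directly by a quantity independent of the $E_k$'s, and that the paper states its tree estimate~\eqref{eq:estimate-ell} for $\tau(\Tree_1)$ rather than $\lvert B(\Tree_1)\rvert$ (the two are equivalent up to constants via~\eqref{eq:formula-tau}, and the appendix's proof in fact bounds $\P(\lvert\Tree_1\rvert \geq u,\ \Kcal(\Tree_1)=\emptyset)$ from below, which serves either formulation).
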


\begin{proof}
	We have
	\begin{align*}
		\E^n(L^{n,\pi}_t) & = n \E^n \left( \int_0^t \indicator{\pi(X_u) = 0} \d u \right)\\
		& = n \E \left( \int_0^t \indicator{\pi(X_{n^2 u}) = 0} \d u \right)\\
		& = \frac{1}{n} \E \left( \int_0^{n^2 t} \indicator{\pi(X_u) = 0} \d u \right).
	\end{align*}
	
	Let $D^0 = 0$ and $D^k$ for $k \geq 1$ be the endpoint of the $k$th excursion away from $0$ of $\pi \circ X$, and let $K(y) = \sum_{k \geq 1} \indicator{D^k \leq y}$ be the number of excursions finishing before $y$. In particular,
	\[ \int_0^{n^2 t} \indicator{\pi(X_u) = 0} \d u \leq \sum_{k = 1}^{K(n^2 t) + 1} \int_{D^{k-1}}^{D^k} \indicator{\pi(X_u) = 0} \d u. \]
	
	The coupling with branching random walks shows that for each $k \geq 1$ we can write $D^k - D^{k-1} = E^k + V^k$, where $E^k$ and $V^k$ are independent and, under $\P$:
	\begin{itemize}
		\item $E^k = \int_{D^{k-1}}^{D^k} \indicator{\pi(X_u) = 0} \d u$ is the time that the price process stays at $0$ before the $k$th excursion starts. In particular, it follows an exponential distribution with parameter $\lambda \P(J = 1)$;
		\item $V^k$ is the time taken to explore the ambient tree $\Tree^k$, distributed according to $\Tree_1$, corresponding to the $k$th excursion. According to the discussion at the end of Section~\ref{sec:coupling}, we can write $V^k = \Scal^k(\tau(\Tree^k))$ where $\Scal^k$ is a random walk independent from $\Tree^k$ and with step distribution an exponential random variable with parameter $2 \lambda$.
	\end{itemize}
	
	Note furthermore that, since $\pi \circ X$ under $\P$ is regenerative at $0$, the random variables $((E^k, \Scal^k, \Tree^k), k \geq 1)$ are i.i.d.. With this decomposition, we have
	\[ K(y) = \sum_{k \geq 1} \indicator{D^k \leq y} = \sum_{k \geq 1} \indicator{ E^1 + \cdots + E^k + V^1 + \cdots + V^k \leq y} \leq \bar K(y) \]
	with $\bar K(y) = \sum_{k \geq 1} \indicator{ V^1 + \cdots + V^k \leq y}$, and so
	\[ \E^n(L^{n,\pi}_t) \leq \frac{1}{n} \E \left( \sum_{k = 1}^{\bar K(n^2 t) + 1} E^k \right) = \frac{1}{\lambda \P(J = 1) n} \E \left( \bar K(n^2 t) + 1 \right), \]
	where the last inequality follows from the independence between $\bar K(n^2 t)$ and the $E^k$'s. By definition of $\bar K(n^2 t)$ we have
	\[ \E \left( \bar K(n^2 t) \right) = \sum_{k \geq 1} \P \left( V^1 + \cdots + V^k \leq n^2 t \right) \leq \sum_{k \geq 1} \P \left( \max_{1 \leq i \leq k} V^i \leq n^2 t \right) \]
	and since the $V^i$'s are i.i.d.\ with common distribution $\Scal^1(\tau(\Tree^1))$, we end up with
	\[ \E \left( \bar K(n^2 t) \right) \leq \sum_{k \geq 1} \left\{ \P \left( V^1 \leq n^2 t \right) \right\}^k \leq \frac{1}{1 - \P \left( V^1 \leq n^2 t \right)} \]
	with this last upper bound being equal to $1/\P \left( \Scal^1(\tau(\Tree^1)) \geq n^2 t \right)$. According to~\eqref{eq:estimate-ell} we have $\P(\tau(\Tree_1) \geq u) \geq C u^{-1/2}$ and so the lower bound in~\eqref{eq:bounds-exploration} implies that $\P(\Scal^1(\tau(\Tree^1)) \geq u)$ obeys to a similar lower bound, which completes the proof.
\end{proof}

\begin{lemma} [Proof of property~\ref{prop:zero-set}] \label{lemma:lebesgue-measure}
	We have $\Pbf(\forall t \geq 0: \ell_t = 0) = 1$.
\end{lemma}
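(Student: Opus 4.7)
The plan is as follows. Lemma~\ref{lemma:control-mean-local-time} together with the identity $L^{n,\pi}_t = n \ell_t$ yields $\E^n(\ell_t) \leq Ct^{1/2}/n$, so $\ell_t \to 0$ in $L^1(\P^n)$. Since $\ell_t \geq 0$, the claim $\ell_t = 0$ $\Pbf$-a.s.\ is equivalent to $\Ebf(\ell_t) = 0$. The difficulty is that $\ell_t$ is only upper semicontinuous in the Skorohod topology (take $f_n \equiv 1/n \to f \equiv 0$), so weak convergence gives $\Ebf(\ell_t) \geq \limsup_n \E^n(\ell_t) = 0$, which is the wrong direction.

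To circumvent this, I would approximate $\ell_t$ from above by the continuous functional $F_\varepsilon(X) = \int_0^t \phi_\varepsilon(\pi(X_s)) \d s$, where $\phi_\varepsilon(x) = (1 - x/\varepsilon)^+$. Then $F_\varepsilon \geq \ell_t$ (since $\phi_\varepsilon(0) = 1$), and $\phi_\varepsilon \downarrow \indicator{x = 0}$ pointwise, so on continuous paths $F_\varepsilon \downarrow \ell_t$ by dominated convergence. Since $\pi \circ X$ is $\Pbf$-a.s.\ continuous (Step~1) and the map $f \mapsto \int_0^t \phi_\varepsilon(f(s)) \d s$ is continuous on $\Dcal([0,\infty), \R)$ at continuous paths, the joint weak convergence of Lemma~\ref{lemma:joint-convergence} yields $\Ebf(F_\varepsilon) = \lim_n \E^n(F_\varepsilon)$ for each $\varepsilon > 0$; monotone convergence then gives $\Ebf(\ell_t) = \lim_{\varepsilon \downarrow 0} \Ebf(F_\varepsilon)$.

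The main task is then to show $\limsup_n \E^n(F_\varepsilon) \to 0$ as $\varepsilon \downarrow 0$. Since $F_\varepsilon(X) \leq \int_0^t \indicator{\pi(X_s) \leq \varepsilon} \d s$, this reduces to bounding the time $\pi \circ X$ spends in $[0, \varepsilon]$ under $\P^n$, uniformly in $n$, by a quantity vanishing with $\varepsilon$. The time exactly at $0$ is controlled by Lemma~\ref{lemma:control-mean-local-time}; the time in $(0, \varepsilon]$ during excursions equals, via Theorem~\ref{thm:coupling}, the exploration time of the part of $B_+(\Tree_1)$ with labels in $(0, \varepsilon n]$. Since $\E(J) > 0$, one expects $\psi(v, \Tree_1) \approx \E(J) \lvert v \rvert$, so such nodes lie at depths $O(\varepsilon n)$ with expected count $O(\varepsilon n)$; combined with the renewal bound $O(n t^{1/2})$ on the number of excursions in time $n^2 t$ (cf.\ the proof of Lemma~\ref{lemma:control-mean-local-time}), this should give $\limsup_n \E^n(F_\varepsilon) = O(\varepsilon t^{1/2})$. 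The hard part is justifying this count rigorously; it hinges on the branching-random-walk-with-barrier estimates gathered in Appendix~\ref{appendix}. Finally, the extension to $\Pbf(\forall t \geq 0: \ell_t = 0) = 1$ follows from the monotonicity of $t \mapsto \ell_t$ and a countable union over rational $t$.
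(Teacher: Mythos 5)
Your proposal is correct and follows essentially the same route as the paper: split the time in $[0,\varepsilon]$ into the Lebesgue measure of the zero set (controlled by Lemma~\ref{lemma:control-mean-local-time}, giving an $O(t^{1/2}/n)$ contribution) plus the time in $(0,\varepsilon]$ (controlled via the coupling of Theorem~\ref{thm:coupling} and the bound~\eqref{eq:estimate-lebesgue-measure} on the number of nodes with small label, giving an $O(\varepsilon t^{1/2})$ contribution), then pass to the limit. Your continuous approximation $\phi_\varepsilon$ makes explicit what the paper states tersely when it invokes convergence of $\int \phi\,\d X_t$, and your ``renewal bound on the number of excursions'' is precisely the content of the compensation formula~\eqref{eq:compensation-formula} the paper uses to organize the same estimate.
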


\begin{proof}
	We will use the compensation formula for the Poisson point process of excursions away from $0$ of $\pi \circ X$ associated to the local time $\ell$, see, e.g., Corollary IV.$11$ in~\cite{Bertoin96:0}. More precisely, under $\P^n$ the first jump of the right-continuous inverse of $\ell$ occurs at rate $\lambda \P(J = 1) n^2$, which uniquely identifies the excursion measure of $\pi \circ X$ associated to $\ell$ as being equal to $\lambda \P(J = 1) n^2$ times the law of an excursion of $\pi \circ X$ away from $0$, see for instance Proposition~O.$2$ in~\cite{Bertoin96:0}. In particular, if $(\beta_s, s \geq 0)$ is the Poisson point process of excursions of $\pi \circ X$ away from $0$ associated to $\ell$, so that $\beta_s \in \Ecal \cup \{\partial\}$ for some cemetery state $\partial$, then for any $t \geq 0$ and any measurable function $F: \Ecal \cup \{\partial\} \to [0,\infty)$ with $F(\partial) = 0$ we have (recall the notation $\E_1 = \E_{\delta_1}$)
	\[ \E^n \left( \sum_{0 \leq s \leq t} F(\beta_s) \right) = \lambda \P(J = 1) \times n \E^n \left( \ell_t \right) \times n \E^n_1 \big[ F(\sigma_{D_0}) \big]. \]
	
	Since $n \ell_t = L^{n, \pi}_t$, the previous lemma thus gives
	\begin{equation} \label{eq:compensation-formula}
		\E^n \left( \sum_{0 \leq s \leq t} F(\beta_s) \right) \leq C t^{1/2} \times n \E^n_1 \big[ F(\sigma_{D_0}) \big], \ n \geq 1, t \geq 0.
	\end{equation}
	
	Let us now prove the result: actually, it is enough to prove that for any $\eta > 0$,
	\begin{equation} \label{eq:goal-control-local-time-0}
		\limsup_{n \to +\infty} \P^n \left( \int_0^t \indicator{\pi(X_u) \leq \varepsilon} \d u \geq \eta \right) \mathop{\longrightarrow}_{\varepsilon \to 0} 0.
	\end{equation}
	
	Indeed, if this holds, then using the convergence $\int \phi \d X_t \Rightarrow^n \int \phi \d X_t$ for continuous $\phi$ with a compact support, this would imply
	\[ \Pbf\left(\int_0^t \indicator{\pi(X_u) \leq \varepsilon} \d u \geq \eta\right) \mathop{\longrightarrow}_{\varepsilon \to 0} 0 \]
	which would yield $\Pbf(\ell_t = 0) = 1$ for each fixed $t \geq 0$, and thus $\Pbf(\forall t \geq 0: \ell_t = 0) = 1$ by continuity of $\ell$. So let us show~\eqref{eq:goal-control-local-time-0}: using Markov inequality, writing $\int_0^t \indicator{\pi(X_u) \leq \varepsilon} \d u = \ell_t + \int_0^t \indicator{0 < \pi(X_u) \leq \varepsilon} \d u$ and using $t \leq D_t$, we obtain
	\[ \P^n \left( \int_0^t \indicator{\pi(X_u) \leq \varepsilon} \d u \geq \eta \right) \leq \frac{1}{\eta} \E^n(\ell_t) + \frac{1}{\eta} \E^n \left( \int_0^{D_t} \indicator{0 < \pi(X_u) \leq \varepsilon} \d u \right). \]
	
	Since $\ell_t = L^{n,\pi}_t / n$, the first term of the above upper bound is upper bounded by $C t^{1/2} / (\eta n)$ by Lemma~\ref{lemma:control-mean-local-time}. As for the second term, using~\eqref{eq:compensation-formula} with $F(\epsilon) = \int_0^{\zeta(\epsilon)} \indicator{\epsilon_u \leq \varepsilon} \d u$ for $\epsilon \in \Ecal$, we obtain
	\begin{multline*}
		\E^n \left( \int_0^{D_t} \indicator{0 < \pi(X_u) \leq \varepsilon} \d u \right) \leq C t^{1/2} n \E^n_1 \left( \int_0^{D_0} \indicator{\pi(X_u) \leq \varepsilon} \d u \right)\\
		= \frac{C t^{1/2}}{n} \E_1 \left( \int_0^{D_0} \indicator{\pi(X_u) \leq \varepsilon n} \d u \right).
	\end{multline*}
	
	In terms of the exploration of the ambient tree (equal in distribution to $\Tree_1$), the integral under the expectation corresponds to the time spent when the largest label of a green node was $\leq \varepsilon n$. Since transitions occur at rate $2 \lambda$ independently from everything else, we therefore have
	\[ \E_1 \left( \int_0^{D_0} \indicator{\pi(X_u) \leq \varepsilon n} \d u \right) = \frac{1}{2 \lambda} \E \left( \sum_{k = 0}^{\tau(\Tree_1)-1} \indicator {\psi( \gamma(\Phi_k(\Tree_1)), \Tree_1) \leq \varepsilon n} \right). \]
	
	Further, we can write
	\[ \sum_{k = 0}^{\tau(\Tree_1)-1} \indicator {\psi( \gamma(\Phi_k(\Tree_1)), \Tree_1) \leq \varepsilon n} = \sum_{p=1}^{\varepsilon n} \sum_{v \in B(\Tree_1)} \indicator{\psi(v, \Tree_1) = p} \sum_{k \geq 0} \indicator{\gamma(\Phi_k(\Tree_1)) = v}. \]
	
	The sum $\sum_{k \geq 0} \indicator{\gamma(\Phi_k(\Tree_1)) = v}$ counts the number of times the node $v$ has been the price: this is actually equal to $1 + \Ccal(v)$, with $\Ccal(v)$ the number of children of $v$ in $\Tree_1$. The one accounts for the first time $v$ becomes the price, and the additional $\Ccal(v)$ accounts for the fact that each child of $v$ makes $v$ stay the price one more unit of time (either immediately, if the child has a smaller label, or later on if the child has a larger label). Thus
	\[ \sum_{k = 0}^{\tau(\Tree_1)-1} \indicator {\psi( \gamma(\Phi_k(\Tree_1)), \Tree_1) \leq \varepsilon n} = \sum_{p=1}^{\varepsilon n} \sum_{v \in B(\Tree_1)} \indicator{\psi(v, \Tree_1) = p} (1 + \Ccal(v)). \]
	
	Now this sum counts twice all nodes in $B(\Tree_1) \setminus \Kcal(\Tree_1)$ with label in $\{1, \ldots, \varepsilon n\}$; it also counts once the nodes in $\Kcal(\Tree_1)$ as well as the nodes with label $\varepsilon n + 1$ whose parent has label $\varepsilon n$. In particular,
	\begin{equation} \label{eq:factor-2}
		\sum_{k = 0}^{\tau(\Tree_1)-1} \indicator {\psi( \gamma(\Phi_k(\Tree_1)), \Tree_1) \leq \varepsilon n} \leq 2 \sum_{v \in B(\Tree_1)} \indicator{\psi(v, \Tree_1) \leq \varepsilon n + 1}
	\end{equation}
	and so taking the mean and using~\eqref{eq:estimate-lebesgue-measure} finally gives $\E_1(\int_0^{D_0} \indicator{\pi(X_u) \leq \varepsilon n} \d u) \leq C \varepsilon n$. Gathering the previous inequalities, we end up with
	\[ \P^n \left( \int_0^t \indicator{\pi(X_u) \leq \varepsilon} \d u \geq \eta \right) \leq \frac{C t^{1/2}}{\eta n} + \frac{C \varepsilon t^{1/2}}{\eta} \]
	from which~\eqref{eq:goal-control-local-time-0} follows by letting first $n \to +\infty$ and then $\varepsilon \to 0$.
\end{proof}

\begin{lemma} [Proof of properties~\ref{prop:D'} and~\ref{prop:reg}] \label{lemma:D}
	We have $\Pbf(D'_0 = 0) = 1$ and for any $t \geq 0$ and $f, g$ bounded, continuous functions on $\Ecal$, the relation~\eqref{eq:def-continuous-regeneration} holds.
\end{lemma}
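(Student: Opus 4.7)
The lemma has two assertions. For $\Pbf(D'_0 = 0) = 1$, the plan is to bootstrap from the behavior of the mass process: Lemma~\ref{lemma:joint-convergence} provides $\pi(X_t) \leq M(X_t)$ for all $t \geq 0$ $\Pbf$-a.s., and Lemma~\ref{lemma:preliminary} identifies $M \circ X$ under $\Pbf$ as $\alpha W$, a reflected Brownian motion. Since $0$ is regular for reflected Brownian motion, the zero set of $M \circ X$ accumulates at $0$ almost surely, and any zero of $M \circ X$ is also a zero of $\pi \circ X$. Hence the same accumulation holds for $\pi \circ X$, forcing $D'_0 = 0$.

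For the regeneration identity, the plan is to first establish its discrete analog under $\P^n$ via the strong Markov property at $D_t$, then pass to the limit. Under $\P^n$, $D_t$ is a stopping time with $\pi(X_{D_t}) = 0$, so
\[ \E^n\!\left( f(\sigma_{D_t}) g(\theta_{D_t}) \right) = \E^n\!\left( f(\sigma_{D_t}) \, \E^n_{X_{D_t}}\!\left[ g(\pi \circ X) \right] \right). \]
The key observation is that for any $\nu$ with $\pi(\nu) = 0$, the law of $\pi \circ X$ under $\P^n_\nu$ does not depend on $\nu$: the waiting time before the next excursion is exponential with parameter $\lambda \P(J=1) n^2$ irrespective of the mass of $\nu$ at $0$, and each excursion is encoded by an independent copy of $\Tree_1$ through the coupling of Theorem~\ref{thm:coupling}. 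Hence $\E^n_{X_{D_t}}[g(\pi \circ X)] = \E^n[g(\pi \circ X)]$ $\P^n$-a.s., yielding the discrete analog of~\eqref{eq:def-continuous-regeneration}.

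Passage to the limit proceeds via Lemma~\ref{lemma:continuity-shift+stopping}, which will give $(\sigma_{D_t}, \theta_{D_t}) \Rightarrow^n (\sigma_{D_t}, \theta_{D_t})$ once we establish $(\pi \circ X, D_t) \Rightarrow^n (\pi \circ X, D_t)$. By the aside of Section~\ref{sub:aside-hitting-times}, this joint convergence reduces to showing, for every $\eta > 0$,
\[ \limsup_{n \to +\infty} \P^n\!\left( D_t - D_{t,\varepsilon} \geq \eta \right) \mathop{\longrightarrow}_{\varepsilon \to 0} 0. \]
By the strong Markov property at $D_{t,\varepsilon}$, this in turn reduces to a uniform (in $n$) bound on $\sup\{\P^n_\nu(D_0 \geq \eta): \nu \in \Mcal_F^n,\ \pi(\nu) \leq \varepsilon\}$. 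Via the coupling, the time to reach $0$ from such a $\nu$ is dominated by the remaining exploration time of the ambient tree, which one controls using~\eqref{eq:bounds-exploration} together with moment estimates on $\tau(\Tree_1)$ restricted to labels $\leq \varepsilon n$ drawn from the Appendix; I expect a bound of order $O(\varepsilon^2)$, which vanishes as $\varepsilon \to 0$. Once the joint convergence is in hand, bounded continuous convergence in the discrete identity closes the argument. The main obstacle is precisely this uniform hitting-time estimate, where the branching-random-walk coupling and the Appendix are truly needed; the rest is a routine application of the strong Markov property and weak convergence.
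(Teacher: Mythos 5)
Your argument for $\Pbf(D'_0 = 0) = 1$ is correct and is a genuinely different, more economical route than the paper's. You observe that $0 \le \pi(X_t) \le M(X_t)$ $\Pbf$-a.s. (Lemma~\ref{lemma:joint-convergence}) while $M\circ X$ under $\Pbf$ is a reflected Brownian motion started at $0$ (Lemma~\ref{lemma:preliminary}), whose zero set accumulates at $0$; since every zero of $M\circ X$ is a zero of $\pi\circ X$, $D'_0=0$ a.s. The paper instead reproves this by another hitting-time estimate ($\Pbf(D'_0 \geq \varepsilon)\le \Pbf(D_\eta\geq\varepsilon-\eta)$ and a variant of \eqref{eq:D_t-1}); your soft argument avoids that extra work and only uses facts already available. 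The overall skeleton for the regeneration identity --- discrete identity under $\P^n$ by the strong Markov property at $D_t$ (noting the law of $\pi\circ X$ under $\P^n_\nu$ is insensitive to the mass of $\nu$ at $0$), passage to the limit by Lemma~\ref{lemma:continuity-shift+stopping}, reduction via Section~\ref{sub:aside-hitting-times} to \eqref{eq:hitting-times} --- is also exactly the paper's.

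The gap is in the final reduction. You claim that \eqref{eq:hitting-times} reduces, by the strong Markov property at $D_{t,\varepsilon}$, to a uniform bound on $\sup\{\P^n_\nu(D_0 \geq \eta): \nu \in \Mcal_F^n,\ \pi(\nu) \leq \varepsilon\}$, and you predict this sup is $O(\varepsilon^2)$. That sup is in fact identically $1$ for every $\varepsilon > 0$ and every $n$: take $\nu$ whose underlying $\nu^n$ stacks an arbitrarily large number of orders at some level $\le \varepsilon n$; dissolving that stack takes as long as you like, so $\P^n_\nu(D_0\geq\eta)$ can be made close to $1$. The point is that the configuration $X_{D_{t,\varepsilon}}$ is not an arbitrary measure with price $\le\varepsilon$: it is constrained by the history, and that constraint must be exploited. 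There is also no single "ambient tree'' attached to an arbitrary $\nu$ --- one has a forest, one tree per order --- so "the remaining exploration time of the ambient tree'' is not well-defined under your reduction.

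The paper avoids this by not decoupling from the history: it invokes Getoor's theorem to write $\P^n(D_t - D_{t,\varepsilon}\geq\eta)$ as an integral over the age of the excursion straddling $t$, reducing the problem to bounding $\sup_{u>0}\P^n_1(D_u - D_{u,\varepsilon}>\eta \mid D_0 > u)$. Under $\P_1$ conditioned on $\{D_0>u\}$ there is a single ambient tree, distributed as $\Tree_1$ conditioned to be large, and the mass accumulated below $\varepsilon n$ is controlled by the number of nodes with label $\le(\varepsilon+\varepsilon')n$ in that tree (estimate \eqref{eq:estimate-D-1}). Moreover, one must separately control the possibility that the price climbs well above $\varepsilon$ again before hitting $0$: the paper introduces $S = \sup_{[D_{u n^2,\varepsilon n}, D_{u n^2}]}\pi\circ X$, bounds $\P(S>(\varepsilon+\varepsilon')n)$ by counting excursions above $\varepsilon n - 1$ (via \eqref{eq:estimate-D-2}) and bounding their heights (Lemma~\ref{lemma:tail-psi}), and on the complementary event $\{S \le (\varepsilon+\varepsilon')n\}$ bounds the sojourn time by $\Scal(2N^{\le})$. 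Your sketch contains neither the conditioning on the ambient tree nor the $S$-control, and both are essential; without them the "$O(\varepsilon^2)$'' heuristic does not materialize.
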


\begin{proof}
	We first prove the property~\ref{prop:reg}. First, assume that for every $\eta > 0$ it holds that
	\begin{equation} \label{eq:D_t-1}
		\limsup_{n \to +\infty} \P^n \left( D_t - D_{t, \varepsilon} \geq \eta \right) \mathop{\longrightarrow}_{\varepsilon \to 0} 0.
	\end{equation}
	
	As explained in Section~\ref{sub:aside-hitting-times}, this implies that $(\theta_{D_t}, \sigma_{D_t}) \Rightarrow^n (\theta_{D_t}, \sigma_{D_t})$. On the other hand, since $\pi \circ X$ under $\P^n$ is regenerative at $0$,~\eqref{eq:def-continuous-regeneration} holds with $\E^n$ instead of $\Ebf$ and so passing to the limit and using $(\theta_{D_t}, \sigma_{D_t}) \Rightarrow^n (\theta_{D_t}, \sigma_{D_t})$ we obtain the desired result. Thus we only have to prove~\eqref{eq:D_t-1}, which we do now.
	
	Let $A_t = t - G_t$ be the age of the excursion straddling $t$, and $G^u < D^u$ for $u > 0$ be the endpoints of the first excursion of $\pi \circ X$ with length $> u$, say $e^u$: then Theorem~$(5.9)$ in~\cite{Getoor79:0} shows that for $u > 0$ and $\nu \in \Mcal_F$ and conditionally on $\{ A_t = u, X^n_{G_t} = \nu \}$ (recall the definition of $X^n_t$ before Theorem~\ref{thm:main}), the excursion of $\pi \circ X$ straddling $t$ is equal in distribution to $e^u$ conditionally on $\{ X^n_{G^u} = \nu \}$: in particular,
	\begin{multline*}
		\P^n \left( D_t - D_{t, \varepsilon} \geq \eta \right)\\
		= \int \P^n \left( A_t \in \d u, X^n_{G_t} \in \d \nu \right) \P^n_\nu \left( D_u - D_{u, \varepsilon} \geq \eta \mid D_0 > u \right).
	\end{multline*}
	
	Further, under $\P^n$, $X^n_{G_t}$ is almost surely of the form $\nu = \varsigma \delta_0 + \delta_1$ for some $\varsigma \geq 0$. For such an initial condition, the number $\varsigma$ of orders sitting at $0$ does not influence the first excursion, which is distributed like the first excursion under $\P_1$: thus
	\[ \P^n \left( D_t - D_{t, \varepsilon} \geq \eta \right) = \int \P^n \left( A_t \in \d u \right) \P^n_1 \left( D_u - D_{u, \varepsilon} \geq \eta \mid D_0 > u \right) \]
	and the goal is now to prove that
	\[ \limsup_{n \to +\infty} \ \sup_{u > 0} \ \P^n_1 \left( D_u - D_{u, \varepsilon} > \eta \mid D_0 > u \right) \mathop{\longrightarrow}_{\varepsilon \to 0} 0, \]
	which will achieve the proof of~\eqref{eq:D_t-1}. Rescaling, we obtain
	\[ \P^n_1 \left( D_u - D_{u, \varepsilon} \geq \eta \mid D_0 > u \right) = \P_1 \left( D_{u n^2} - D_{u n^2, \varepsilon n} \geq \eta n^2 \mid D_0 > u n^2 \right) \]
	and to control this term we consider any $\varepsilon' > 0$ and write
	\begin{multline} \label{eq:bound-D}
		\P^n_1 \left( D_u - D_{u, \varepsilon} \geq \eta \mid D_0 > u \right) \leq \P_1 \left( S \geq (\varepsilon + \varepsilon') n \mid D_0 > u n^2 \right)\\
		+ \P_1 \left( D_{u n^2} - D_{u n^2, \varepsilon n} \geq \eta n^2, S \leq (\varepsilon + \varepsilon') n \mid D_0 > u n^2 \right)
	\end{multline}
	where $S = \sup \pi \circ X$, where the supremum is taken over $[D_{u n^2, \varepsilon n}, D_{u n^2}]$.
	\\
	
	\noindent{\textit{High-level description.}} Let us explain in words how we are going to upper bound each term in the right-hand side of~\eqref{eq:bound-D}: this reasoning will also be used in the proof of Lemma~\ref{lemma:g-d}. Let $\hat \Tree$ be the ambient tree corresponding to the first excursion of $\pi \circ X$ away from $0$, so that $D_0$ is the sum of $\tau(\hat \Tree)$ i.i.d.\ exponential random variables with parameter $2 \lambda$ and the conditioning $D_0 > u n^2$ therefore amounts, by~\eqref{eq:formula-tau}, to $B(\hat \Tree)$ having a large number of nodes.
	
	When $S \leq (\varepsilon + \varepsilon') n$, then $D_{u n^2} - D_{u n^2, \varepsilon n}$ is smaller than the time spent exploring \emph{all} the nodes in $\hat \Tree$ with label $\leq (\varepsilon + \varepsilon') n$. We have a good control on the number of such nodes (they are of the order of $(\varepsilon + \varepsilon')^2 n^2$) which thus translates into a good control on $D_{u n^2} - D_{u n^2, \varepsilon n}$ in this event.
	
	On the other hand, to control the probability of $S$ being large, i.e., $S > (\varepsilon + \varepsilon') n$, we observe that $S$ is equal to the largest supremum of the excursions above $\varepsilon n$ that start between times $D_{u n^2, \varepsilon n}$ and $D_0$. By Lemma~\ref{lemma:discrete-reg} these excursions are i.i.d.\ with common distribution the exploration of a tree distributed like $\Tree_1$. In particular, we can control their supremum (which is equal in distribution to $\psi^*(\Tree_1)$), and to control their number, we use a crude upper bound by saying that there cannot be more excursions above level $\varepsilon n$ than there are nodes in $\hat \Tree$ with label $= \varepsilon n$. Again, we have a good control on these two quantities which, combined, will give us a sufficiently good control on the probability of $S$ being large.
	\\
	
	Let us now make these arguments rigorous. As just explained, $D_{u n^2} - D_{u n^2, \varepsilon n}$ is, in the event $\{S \leq (\varepsilon + \varepsilon') n, D_0 > u n^2\}$, smaller than the time spent exploring the nodes of the ambient tree that have a label $\leq (\varepsilon + \varepsilon') n$. This means that if
	\[ N^\leq = \sum_{v \in B(\Tree_1)} \indicator{\psi(v, \Tree_1) \leq (\varepsilon + \varepsilon') n} \]
	is the number of such nodes, then
	\begin{multline*}
		\P_1 \left( D_{u n^2} - D_{u n^2, \varepsilon n} \geq \eta n^2, S \leq (\varepsilon + \varepsilon') n \mid D_0 > u n^2 \right)\\
		\leq \P \left( \Scal(2 N^\leq) \geq \eta n^2 \mid \tau(\Tree_1) > u n^2 \right)
	\end{multline*}
	(the factor $2$ in $2 N^\leq$ comes from the same reason as the $2$ in the right-hand side of~\eqref{eq:factor-2}). Invoking~\eqref{eq:bounds-exploration}, we get
	\begin{multline*}
		\P_1 \left( D_{u n^2} - D_{u n^2, \varepsilon n} \geq \eta n^2, S \leq (\varepsilon + \varepsilon') n \mid D_0 > u n^2 \right)\\
		\leq \P \left( N^\leq \geq \lambda \eta n^2 / 2 \mid \tau(\Tree_1) > u n^2 \right) + e^{-\overline \mu \eta n^2}
	\end{multline*}
	and so~\eqref{eq:estimate-D-1} finally gives
	\[ \P_1 \left( D_{u n^2} - D_{u n^2, \varepsilon n} \geq \eta n^2, S \leq (\varepsilon + \varepsilon') n \mid D_0 > u n^2 \right) \leq \frac{C (\varepsilon + \varepsilon')^2}{\eta} + e^{-\overline \mu \eta n^2}. \]
	
	We now control the second term in the right-hand side of~\eqref{eq:bound-D}. Let $e^k$ be the $k$th excursion of $\pi \circ X$ above level $\varepsilon n-1$ to start after time $un^2$, and let $N^=$ be the number of excursions above level $\varepsilon n-1$ that belong to the first excursion of $\pi \circ X$ away from $0$: then as explained above, for any $\kappa_0 > 0$ we have
	\begin{multline*}
		\P_1 \left( S \geq (\varepsilon + \varepsilon') n \mid D_0 > u n^2 \right) \leq \P_1 \left( N^= \geq \kappa_0 n \mid D_0 > u n^2 \right)\\
		+ \P_1 \left( \sup_{1 \leq k \leq \kappa_0 n} \sup e^k \geq \varepsilon' n \mid D_0 > u n^2 \right).
	\end{multline*}
	
	Thanks to the coupling, we have
	\[ \P_1 \left( N^= \geq \kappa_0 n \mid D_0 > u n^2 \right) \leq \P \left( \sum_{v \in \Tree_1} \indicator{\psi(v, \Tree_1) = \varepsilon n} \geq \kappa_0 n \mid \tau(\Tree_1) > u n^2 \right) \]
	and so~\eqref{eq:estimate-D-2} gives $\P_1 \left( N^= \geq \kappa_0 n \mid D_0 > u n^2 \right) \leq C \varepsilon/\kappa_0$. On the other hand, since under $\P_1(\, \cdot \mid D_0 > u n^2)$ the $(e^k, k \geq 1)$ are i.i.d., with common distribution the first excursion of $\pi \circ X$ under $\P_1$ (as a consequence of Lemma~\ref{lemma:discrete-reg}), we have thanks to the union bound
	\[ \P_1 \left( \sup_{1 \leq k \leq \kappa_0 n} \sup e^k \geq \varepsilon' n \mid D_0 > u n^2 \right) \leq \kappa_0 n \P_1 \left( \sup_{[0, D_0]} \pi \circ X \geq \varepsilon'n \right). \]
	
	By the coupling,
	\[ \P_1 \left( \sup_{[0, D_0]} \pi \circ X \geq \varepsilon'n \right) = \P \left( \psi^*(B(\Tree_1)) \geq \varepsilon' n \right) \leq \frac{C}{\varepsilon' n} \]
	where the last inequality follows from Lemma~\ref{lemma:tail-psi}. Gathering the previous bounds, we see that
	\[ \P^n_1 \left( D_u - D_{u, \varepsilon} \geq \eta \mid D_0 > u \right) \leq \frac{C (\varepsilon + \varepsilon')^2}{\eta} + \frac{C \varepsilon}{\kappa_0} + \frac{C \kappa_0}{\varepsilon'} + e^{-\overline \mu \eta n^2}. \]

	Choosing $\kappa_0 = \varepsilon^{1/2}$ and $\varepsilon' = \varepsilon^{1/4}$, and letting first $n \to \infty$ and then $\varepsilon \to 0$ achieves to prove~\eqref{eq:D_t-1}, and in particular property~\ref{prop:reg}.
	\\
	
	We now prove property~\ref{prop:D'}, i.e., for any $\varepsilon > 0$ we must prove that $\Pbf(D'_0 \geq \varepsilon) = 0$. For any $\eta > 0$ we have $\Pbf(D'_0 \geq \varepsilon) \leq \Pbf(D_\eta \geq \varepsilon - \eta)$. Because we have just proved that $D_\eta \Rightarrow^n D_\eta$, we have $\P^n \left( D_\eta \geq \varepsilon' \right) \to \Pbf \left( D_\eta \geq \varepsilon' \right)$ for all $\varepsilon'$ outside a countable set. Adapting the previous arguments, it is on the other hand not difficult to see that
	\[ \limsup_{n \to +\infty} \P^n \left( D_\eta \geq \varepsilon' \right) \mathop{\longrightarrow}_{\eta \to 0} 0 \]
	which concludes the proof of the lemma.
\end{proof}

\mysubsection{a regenerative property at the excursion level} \label{sub:R}

Let in the sequel $\Ncal$ be an excursion measure of $\Pbf$, whose existence has been proved in the previous step. With a slight abuse in notation we will consider that $\Ncal$ acts on measurable functions $f: \Ecal \to [0,\infty)$ by $\Ncal(f) = \int f \d \Ncal$. Note that $\Ncal$ is only determined up to a multiplicative constant: in this step the value of this multiplicative constant is irrelevant (because we only consider $\Ncal$ upon some conditionings), and it will be fixed at the end of the next step.

The goal of this step is to show that $\Ncal$ satisfies the following regenerative property~\RP\ studied in~\cite{Weill07:0}. In the sequel we use the canonical notation for excursions, and let $\epsilon = (\epsilon_t, t \geq 0)$ denote the canonical excursion and $\xi(a, u)$ for $a, u > 0$ denote the number of excursions of $\epsilon$ above level $a$ that have height $> u$.

\begin{itemize}[align=left, leftmargin=*]
 \item[(R)] \label{reg-property} For every $a, u > 0$ and $p \in \N$, under the probability measure $\Ncal(\, \cdot \mid \sup \epsilon > a)$ and conditionally on the event $\{\xi(a, u) = p\}$, the $p$ excursions of $\epsilon$ above level $a$ with height greater than $u$ are independent and distributed according to the probability measure $\Ncal(\, \cdot \mid \sup \epsilon > u)$.
\end{itemize}

This property implies that $\Ncal$ is the law of the excursion height process of a spectrally positive L\'evy process that does not drift to $+\infty$, see the next step for more details.
\\

The rest of this step is therefore devoted to proving that $\Ncal$ satisfies the regenerative property~\RP. Fix until the rest of this step $a, u > 0$, $p \in \N$ and $(f_k, k = 1, \ldots, p)$ continuous, bounded and non-negative functions on $\Ecal$. Consider the first excursion of $\epsilon$ (or $\pi \circ X$) with exactly $p$ excursions above $a$ with height larger than $u$ and let $(\hat \epsilon^k, k = 1, \ldots, p)$ be these $p$ excursions: in order to show that $\Ncal$ satisfies~\RP\ we have to show that
\[ \Ncal \left( \prod_{k=1}^p f_k(\hat \epsilon^k) \mid \xi(a, u) = p \right) = \prod_{k=1}^p \Ncal \left( f_k(\epsilon) \mid \sup \epsilon > u \right). \]

To prove this we will prove that
\begin{equation} \label{eq:goal-RP-1}
	\E^n \left( \prod_{k=1}^p f_k(\hat \epsilon^k) \right) \mathop{\longrightarrow}_{n \to +\infty} \Ncal \left( \prod_{k=1}^p f_k(\hat \epsilon^k) \mid \xi(a, u) = p \right)
\end{equation}
while at the same time
\begin{equation} \label{eq:goal-RP-2}
	\E^n \left( \prod_{k=1}^p f_k(\hat \epsilon^k) \right) \mathop{\longrightarrow}_{n \to +\infty} \prod_{k=1}^p \Ncal \left(f_k(\epsilon) \mid \sup \epsilon > u \right).
\end{equation}

Let in the rest of the proof $\hat g^k < \hat d^k$ be the endpoints of $\hat \epsilon^k$, $\epsilon^k$ be the $k$th excursion of $\pi \circ X$ above $a$ with height $> u$, and $g^k < d^k$ be its endpoints. Note in particular that $(g^1, d^1) = (g_{a,a+u}, d_{a,a+u})$ (recall the definitions of $g_{a,b}$ and $d_{a,b}$ in~\eqref{eq:def-g-d}--~\eqref{eq:def-U}). Before delving into the technical details let us give an high-level overview of the proofs of~\eqref{eq:goal-RP-1} and~\eqref{eq:goal-RP-2}.
\\

\noindent{\textit{High-level overview of the proof of~\eqref{eq:goal-RP-1}.}} The first step in the proof of~\eqref{eq:goal-RP-1} is to reduce the proof to showing that the endpoints $(\hat g^k, \hat d^k)$ of the $\hat \epsilon^k$ converge. Next, the $(\hat g^k, \hat d^k)$ form by definition a subsequence of the $(g^k, d^k)$: more precisely there exists $k^*$ such that $(\hat g^k, \hat d^k) = (g^{k^*+k}, d^{k^*+k})$, which further reduces to proving that the $(g^k, d^k)$ converge jointly with $k^*$. Finally, we can express $k^*$ in terms of return times to $0$ which eventually reduces the whole proof to the convergence of suitably chosen random times. To do so we prove that the limiting process must cross the levels it reaches in the sense explained in Section~\ref{sub:aside-hitting-times}, i.e., we have to prove results similar to~\eqref{eq:hitting-times}. As before, such controls will be provided by the coupling with the branching random walk.
\\

\noindent{\textit{High-level overview of the proof of~\eqref{eq:goal-RP-2}.}} Heuristically,~\eqref{eq:goal-RP-2} means that the $\hat \epsilon^k$ are asymptotically independent. This is very reasonable as the only correlation between successive excursions above a given level $a$ is through the number of orders placed below $a$ during a given excursion, and this number is small because orders can only be placed below $a$ when the price is below $a+j^*$. In other words, the main intuition behind the proof is that the measures $X^n_{d^1}$ and $X^n_{g^1-}$ representing the state of the book before and after the first excursion above $a$ are very close. This is the meaning of Lemma~\ref{lemma:varepsilon} below, which is proved thanks to coupling argument.

\subsubsection{Proof of~\eqref{eq:goal-RP-1}}

Since $\Ncal$ is an excursion measure of $\pi \circ X$ under $\Pbf$, the probability distribution $\Ncal(\, \cdot \mid \xi(a,u) = p)$ is the law of the first excursion of $\pi \circ X$ under $\Pbf$ that has exactly $p$ excursions above $a$ with height $> u$, and in particular
\[ \Ebf \left( \prod_{k=1}^p f_k(\hat \epsilon^k) \right) = \Ncal \left( \prod_{k=1}^p f_k(\hat \epsilon^k) \mid \xi(a, u) = p \right). \]

Thus in order to prove~\eqref{eq:goal-RP-1} it is enough to show that
\[ \E^n \left( \prod_{k=1}^p f_k(\hat \epsilon^k) \right) \mathop{\longrightarrow}_{n \to +\infty} \Ebf \left( \prod_{k=1}^p f_k(\hat \epsilon^k) \right), \]
i.e., that $(\hat \epsilon^k, k = 1, \ldots, p) \Rightarrow^n (\hat \epsilon^k, k = 1, \ldots, p)$. In view of Lemma~\ref{lemma:continuity-shift+stopping} it is enough to show that the corresponding endpoints converge, i.e., that we have the convergence $((\hat g^k, \hat d^k), k = 1, \ldots, p) \Rightarrow^n ((\hat g^k, \hat d^k), k = 1, \ldots, p)$. We first show in the following two lemmas that $T_{a+u}$ and $(g_{a,a+u}, d_{a,a+u})$ converge, and explain after Lemma~\ref{lemma:g-d} why this implies the convergence of $((\hat g^k, \hat d^k), k = 1, \ldots, p)$. In the following lemma, the limit means that the left-hand side can be made arbitrarily small by choosing $b - \underline b, \overline b - b \geq 0$ small enough. The limit in Lemma~\ref{lemma:g-d} has the same meaning.

\begin{lemma}\label{lemma:T}
	For any $b, \eta > 0$,
	\[ \limsup_{n \to +\infty} \P^n \left( T_{\overline b} - T_{\underline b} \geq \eta \right) \mathop{\longrightarrow}_{\overline b \downarrow b, \underline b \uparrow b} 0. \]
\end{lemma}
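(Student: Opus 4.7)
We first rescale: the statement is equivalent to $\limsup_{n \to +\infty} \P(T_{\overline{b} n} - T_{\underline{b} n} \geq \eta n^2) \to 0$ as $\overline b \downarrow b$, $\underline b \uparrow b$ under $\P$, since $T_b$ under $\P^n$ equals $T_{bn}/n^2$ under $\P$. Applying Lemma~\ref{lemma:discrete-reg} at level $\underline b n$ together with Theorem~\ref{thm:coupling}, the successive excursions of $\pi \circ X$ above level $\underline b n$ under $\P$ are i.i.d., each corresponding to the exploration of an independent copy of $\Tree_{\underline b n + 1}$. Since $\Tree_x$ and $\Tree_1$ share their genealogy and edge labels, $\psi^*(B(\Tree_x)) = \psi^*(B(\Tree_1)) + (x - 1)$, so such an excursion reaches $\overline b n$ if and only if $\psi^*(B(\Tree_1)) \geq (\overline b - \underline b) n$. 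Consequently, the number $N$ of excursions above $\underline b n$ needed to first reach $\overline b n$ is geometric with parameter $q_n = \P(\psi^*(B(\Tree_1)) \geq (\overline b - \underline b) n)$, and by Lemma~\ref{lemma:tail-psi} we have $q_n$ of order $1/((\overline b - \underline b) n)$; in particular, $N$ is concentrated around $(\overline b - \underline b) n$.

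I would then decompose $T_{\overline b n} - T_{\underline b n} = V^{\uparrow} + V^{\downarrow}$, where $V^{\uparrow}$ is the cumulative time spent strictly above $\underline b n$ and $V^{\downarrow}$ the time at or below. For $V^{\uparrow}$, I would adapt the strategy from the ``high-level description'' part of the proof of Lemma~\ref{lemma:D}: bound $V^{\uparrow} \leq \Scal(2 N^{\leq})$, where $N^{\leq}$ is the total number of nodes with label at most $\overline b n$ summed across the $N$ ambient trees. Using~\eqref{eq:bounds-exploration} together with tree estimates in the spirit of~\eqref{eq:estimate-lebesgue-measure} (and taking into account that the first $N-1$ trees have $\psi^*(B(\Tree_1)) < (\overline b - \underline b) n$), one obtains $\E[N^{\leq}]$ of order $((\overline b - \underline b) n)^2$ uniformly in $n$, so $V^{\uparrow}$ is of order $((\overline b - \underline b) n)^2$ in probability, and $V^{\uparrow}/n^2$ vanishes as $\overline b - \underline b \to 0$.

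The main obstacle will be bounding $V^{\downarrow}$, because the successive below-periods are \emph{not} i.i.d.: each excursion above $\underline b n$ deposits $|\Kcal(\Tree_{\underline b n + 1})|$ additional orders at levels below $\underline b n$, modifying the starting state of the next below-period. The assumption $\E(J) > 0$ is essential here: by~\eqref{eq:mean-killed}, the expected number of killed orders per excursion is $1 - \E(J) < 1$, so the accumulation stays controlled, and an ergodic argument analogous to the $G/M/1$ analysis in the proof of Lemma~\ref{lemma:preliminary} shows that (uniformly in the starting state) the process spends a positive fraction of its below-time exactly at level $\underline b n$, from which it jumps up to $\underline b n + 1$ at rate $\lambda \P(J=1)$. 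Since exactly $N$ such up-jumps are made between $T_{\underline b n}$ and $T_{\overline b n}$, this yields $V^{\downarrow}$ comparable to $N/(\lambda \P(J=1)\E(J))$, hence $V^{\downarrow}$ of order $(\overline b - \underline b) n$ in probability and $V^{\downarrow}/n^2$ vanishing even faster than $V^{\uparrow}/n^2$. Combining the two estimates gives the claimed statement.
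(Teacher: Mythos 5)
Your decomposition $T_{\overline b n} - T_{\underline b n} = V^{\uparrow} + V^{\downarrow}$ is a reasonable first instinct, and your analysis of $V^{\uparrow}$ (counting nodes with small label across the $N$ ambient trees, with $N$ geometric of mean $\sim (\overline b - \underline b)n$) is sound and yields $\E[V^{\uparrow}] = O(((\overline b - \underline b)n)^2)$ as you claim. The gap is in the $V^{\downarrow}$ step, and it is a real one. The assertion that ``the process spends a positive fraction of its below-time exactly at level $\underline b n$'' is false: unlike level $0$, the level $\underline b n$ is not a reflecting boundary, and during a below-period the price wanders over the whole range $\{0,\ldots,\underline b n\}$ of size $\sim bn$. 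The occupation fraction at any fixed level in that range is of order $1/(bn)$, not $O(1)$; equivalently, in the diffusive limit the price converges to a continuous process whose occupation measure at any fixed positive level vanishes. The $G/M/1$ argument in the proof of Lemma~\ref{lemma:preliminary} works precisely because $0$ is the reflecting boundary, and it does not transfer to a generic interior level. Consequently your estimate ``$V^{\downarrow}$ comparable to $N/(\lambda\P(J=1)\E(J))$'', i.e.\ of order $(\overline b - \underline b)n$, is off by a factor $\sim n$: the correct order is $b(\overline b - \underline b)n^2$ (the diffusive hitting-time scale), which is in fact the \emph{dominant} contribution when $\overline b - \underline b$ is small. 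The limit $V^{\downarrow}/n^2 \to 0$ as $\overline b - \underline b \to 0$ would still hold, but establishing it properly requires controlling the local time of the price at level $\underline b n$, which is comparable in difficulty to the lemma itself. (A minor slip: \eqref{eq:mean-killed} gives $\E(\lvert \Kcal(\Tree_1)\rvert) = 1 - \E(J)/\P(J=1)$, not $1 - \E(J)$.)

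The paper's proof sidesteps this entirely by a different idea: introduce an auxiliary lower level $\underline b' n$ with $\underline b' = \underline b - \varepsilon'$, $\varepsilon' = (\overline b - \underline b)^{1/2}$, and let $S$ be the overshoot above $\underline b' n$ of the first excursion above $\underline b' n$ that reaches $\underline b n$. By Lemma~\ref{lemma:tail-psi}, the event $\{S \geq (\varepsilon + \varepsilon')n\}$ (i.e.\ that this same excursion already reaches $\overline b n$) has probability $\to 1$ as $\overline b - \underline b \to 0$, and on this event $T_{\overline b n} - T_{\underline b n} \leq U_{\underline b'n,\underline b n}$, the duration of a \emph{single} excursion, which is then controlled via the coupling, $\tau(\Tree_1) + 1 \leq 2\lvert\Tree_1\rvert$, \eqref{eq:bounds-exploration}, and the estimates \eqref{eq:estimate-T-1}--\eqref{eq:estimate-T-2}. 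This replaces the sum over $\sim (\overline b - \underline b)n$ excursions above $\underline b n$, and the attendant analysis of the interleaving below-periods, by a one-excursion bound.
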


\begin{proof}
	Consider any $0 < \underline b' < \underline b$ and let $\varepsilon = \overline b - \underline b$, $\varepsilon' = \underline b - \underline b'$ and $S = \sup \pi \circ X - \underline b' n$, where the supremum is taken over $[g_{\underline b' n, \underline b n}, d_{\underline b' n, \underline b n}]$. Rescaling, we obtain
	\begin{multline*}
		\P^n \left( T_{\overline b} - T_{\underline b} > \eta \right) = \P \left( T_{\overline b n} - T_{\underline b n} > \eta n^2, S \geq (\varepsilon + \varepsilon') n \right)\\
		+ \P \left( T_{\overline b n} - T_{\underline b n} > \eta n^2, S < (\varepsilon + \varepsilon') n \right).
	\end{multline*}

	When $S \geq (\varepsilon + \varepsilon') n$, then necessarily $g_{\underline b' n, \underline b n} \leq T_{\underline b n} \leq T_{\overline b n} \leq d_{\underline b' n, \underline b n}$ and so
	\[ \P^n \left( T_{\overline b} - T_{\underline b} > \eta \right) \leq \P \left( U_{\underline b' n, \underline b n} > \eta n^2 \right) + \P \left( S < (\varepsilon + \varepsilon') n \right). \]
	
	The coupling implies that $U_{\underline b' n, \underline b n}$ under $\P$ is equal in distribution to $\Scal(\tau(\Tree_1) + 1)$ conditionally on $\psi^*(B(\Tree_1)) \geq \varepsilon' n$, and also that $S$ under $\P$ is equal in distribution to $\psi^*(B(\Tree_1))$ conditioned on $\psi^*(B(\Tree_1)) \geq \varepsilon' n$. Using $\tau(\Tree_1) + 1 \leq 2 \lvert \Tree_1 \rvert$ by~\eqref{eq:formula-tau} and using also~\eqref{eq:bounds-exploration}, we therefore get
	\begin{multline*}
		\P^n \left( T_{\overline b} - T_{\underline b} > \eta \right) \leq \P \left( \lvert \Tree_1 \rvert > \lambda \eta n^2 / 3 \mid \psi^*(B(\Tree_1)) \geq \varepsilon' n \right) + e^{-\overline \mu \eta n^2}\\
		+ \P \left( \psi^*(B(\Tree_1)) < (\varepsilon + \varepsilon') n \mid \psi^*(B(\Tree_1)) \geq \varepsilon' n \right).
	\end{multline*}
	
	In view of~\eqref{eq:estimate-T-1} and~\eqref{eq:estimate-T-2}, choosing $\varepsilon' = \varepsilon^{1/2}$ and letting first $n \to +\infty$ and then $\varepsilon \to 0$ gives the result.
\end{proof}

Recall the definition of $U_{a,b}$ in~\eqref{eq:def-g-d}--\eqref{eq:def-U}.

\begin{lemma}\label{lemma:g-d}
	For any $0 \leq a < b$ and any $\eta > 0$,
	\begin{equation} \label{eq:g-d}
		\limsup_{n \to +\infty} \P^n \left( U_{\underline a, b} - U_{\overline a, b} \geq \eta \right) \mathop{\longrightarrow}_{\overline a \downarrow a, \underline a \uparrow a} 0.
	\end{equation}
\end{lemma}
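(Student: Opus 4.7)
The plan is to closely mimic the two-part argument of Lemma~\ref{lemma:D}, as foreshadowed in its ``high-level description''. First observe that $[g_{\overline a, b}, d_{\overline a, b}] \subseteq [g_{\underline a, b}, d_{\underline a, b}]$, so that $U_{\underline a, b} - U_{\overline a, b} \geq 0$ is precisely the time during which $\pi \circ X$, restricted to $[g_{\underline a, b}, d_{\underline a, b}]$, lies either in $[\underline a, \overline a)$ or in a ``parasitic'' sub-excursion above $\overline a$ that does not reach $b$. After rescaling, set $\varepsilon = \overline a - \underline a$, introduce an auxiliary parameter $\varepsilon' > 0$, and let $S$ denote the supremum of $\pi \circ X$ over $[g_{\underline a n, bn}, d_{\underline a n, bn}] \setminus [g_{\overline a n, bn}, d_{\overline a n, bn}]$. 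I would then split
\begin{multline*}
	\P \big( U_{\underline a n, bn} - U_{\overline a n, bn} \geq \eta n^2 \big) \leq \P \big( U_{\underline a n, bn} - U_{\overline a n, bn} \geq \eta n^2, S \leq (\overline a + \varepsilon') n \big)\\
	+ \P \big( S > (\overline a + \varepsilon') n \big).
\end{multline*}

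For the first term, on the event $\{S \leq (\overline a + \varepsilon') n\}$, $U_{\underline a n, bn} - U_{\overline a n, bn}$ is stochastically dominated by the total time spent exploring the nodes of the ambient tree with label at most $(\overline a + \varepsilon') n$. By Theorem~\ref{thm:coupling} the ambient tree is distributed as $\Tree_{\underline a n + 1}$ conditioned on $\{ \psi^*(B(\cdot)) \geq bn\}$, and, reasoning as around~\eqref{eq:factor-2}, this exploration time is at most $\Scal(2 N^\leq)$, where $N^\leq$ counts such low-label nodes. The large-deviation bound~\eqref{eq:bounds-exploration} then reduces the question to controlling $\P(N^\leq \geq \lambda \eta n^2/2 \mid \psi^*(B(\Tree_{\underline a n + 1})) \geq bn)$, which should be of order $(\varepsilon + \varepsilon')^2/\eta$ via an Appendix estimate in the spirit of~\eqref{eq:estimate-lebesgue-measure} and~\eqref{eq:estimate-D-1}.

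For the second term, $\{S > (\overline a + \varepsilon') n\}$ forces at least one parasitic sub-excursion above level $\overline a n$ (i.e., not reaching $bn$) to have height exceeding $\varepsilon' n$. The number of such sub-excursions is bounded by the number $N^=$ of nodes in the conditioned ambient tree with label exactly $\overline a n$, and by Lemma~\ref{lemma:discrete-reg} the sub-excursions are i.i.d.\ with the law of a first excursion of $\pi \circ X$ above a given level. Bounding $\P(N^= \geq \kappa_0 n \mid \psi^*(B(\Tree_{\underline a n + 1})) \geq bn)$ through an Appendix estimate analogous to~\eqref{eq:estimate-D-2}, and each individual height by $C/(\varepsilon' n)$ via Lemma~\ref{lemma:tail-psi}, a union bound yields a contribution of order $\kappa_0/\varepsilon' + \varepsilon/\kappa_0$. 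Choosing $\kappa_0 = \varepsilon^{1/2}$, $\varepsilon' = \varepsilon^{1/4}$, and letting first $n \to \infty$ and then $\varepsilon \to 0$, concludes the proof.

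The main obstacle will be marshalling the correct Appendix estimates under the conditioning $\psi^*(B(\Tree_{\underline a n + 1})) \geq bn$ rather than the $\tau$-conditioning used in Lemma~\ref{lemma:D}. This ought to be a mostly notational adaptation, since a very similar height-conditioning already appeared in Lemma~\ref{lemma:T} via~\eqref{eq:estimate-T-1}--\eqref{eq:estimate-T-2}; the shift of the root label from~$1$ to~$\underline a n + 1$ is harmless, as only the number of nodes within a strip of labels matters, not the absolute offset.
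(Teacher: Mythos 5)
Your plan follows the same blueprint as the paper's proof, namely the two-term decomposition from the ``high-level description'' in the proof of Lemma~\ref{lemma:D}, the same control of $N^\leq$ and $N^=$ under the height conditioning, and the same final choice $\kappa_0 = \varepsilon^{1/2}$, $\varepsilon' = \varepsilon^{1/4}$; the Appendix estimates you are gesturing at are exactly~\eqref{eq:estimate-g-d-1} and~\eqref{eq:estimate-g-d-2}, which are proved by transferring the $\tau$-conditioning of~\eqref{eq:estimate-D-1}--\eqref{eq:estimate-D-2} to a $\psi^*(B(\cdot))$-conditioning via Lemma~\ref{lemma:tail-psi} and the second bound of~\eqref{eq:GW}. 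The one organizational difference is that the paper first writes $U_{\underline a, b} - U_{\overline a, b} = (d_{\underline a, b} - d_{\overline a, b}) + (g_{\overline a, b} - g_{\underline a, b})$ and runs the argument on each piece separately, whereas you treat the whole difference at once. Both work, but the paper's split isolates a point your unified sketch glosses over: when you invoke Lemma~\ref{lemma:discrete-reg} to say the sub-excursions above $\overline a n$ outside $[g_{\overline a n, bn}, d_{\overline a n, bn}]$ are i.i.d.\ with the law of a first excursion above that level, that is only literally true of those occurring after $d_{\overline a n, bn}$. Those occurring before $g_{\overline a n, bn}$ are, by definition of $g_{\overline a n, bn}$, conditioned on \emph{not} reaching $bn$; the paper notes explicitly at the end of its $g$-part argument that this conditioning is harmless because its probability tends to one, and your write-up needs that one extra sentence. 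Relatedly, calling the excursions causing $\{S > (\overline a + \varepsilon')n\}$ ``parasitic sub-excursions that do not reach $bn$'' is slightly imprecise, since a post-$d_{\overline a n, bn}$ sub-excursion may well reach $bn$; this causes no harm in the union bound against $N^=$ and Lemma~\ref{lemma:tail-psi}, but the phrasing should be corrected.
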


\begin{proof}
	Since $U_{\underline a, b} - U_{\overline a, b} = (d_{\underline a, b} - d_{\overline a, b}) + (g_{\overline a, b} - g_{\underline a, b})$, we only have to prove that
	\[ \lim_{\overline a \downarrow a, \underline a \uparrow a} \limsup_{n \to +\infty} \P^n \left( d_{\underline a, b} - d_{\overline a, b} \geq \eta \right) = \lim_{\overline a \downarrow a, \underline a \uparrow a} \limsup_{n \to +\infty} \P^n \left( g_{\overline a, b} - g_{\underline a, b} \geq \eta \right) = 0. \]
	
	The proofs for $d$ and $g$ are very similar to one another, and also very similar to the proof of Lemma~\ref{lemma:D}. Let us first sketch the proof for $d$. First of all, we are interested in the excursion straddling $T_{b n}$ and above $\underline a n$, so the ambient tree, say $\hat \Tree$, is distributed like $\Tree_1$ conditioned on $\psi^*(B(\Tree_1)) > (b - \underline a) n$. Let $\varepsilon = \overline a - \underline a$, consider any $\varepsilon' > 0$ and define
	\[ N^\leq = \sum_{v \in \hat \Tree} \indicator{\psi(v, \hat \Tree) \leq (\varepsilon + \varepsilon') n} \ \text{ and } \ N^= = \sum_{v \in \hat \Tree} \indicator{\psi(v, \hat \Tree) = \varepsilon n} \]
	
	Rescaling and introducing $S = \sup \pi \circ X - \underline a n$, where the supremum is taken over $[d_{\overline a n, b n}, d_{\underline a n, b n}]$, we obtain
	\begin{multline} \label{eq:bound-d}
		\P^n \left( d_{\underline a, b} - d_{\overline a, b} \geq \eta \right) \leq \P \left( d_{\underline a n, b n} - d_{\overline a n, b n} \geq \eta n^2, S \leq (\varepsilon + \varepsilon') n \right)\\
		+ \P \left( S > (\varepsilon + \varepsilon') n \right).
	\end{multline}
	
	To control the right-hand side of the above upper bound we use a similar reasoning as in the proof of Lemma~\ref{lemma:D} (see the \emph{High-level description} there). To control the first term of the above right-hand side, we observe that in the event $\{S \leq (\varepsilon + \varepsilon') n\}$, $d_{\underline a n, b n} - d_{\overline a n, b n}$ is upper bounded by the time spent exploring nodes with label $\leq (\varepsilon + \varepsilon') n$ in $\hat \Tree$, which leads to the bound
	\begin{multline*}
		\P \left( d_{\underline a n, b n} - d_{\overline a n, b n} \geq \eta n^2, S \leq (\varepsilon + \varepsilon') n \right)\\
		\leq \P \left( N^\leq \geq \lambda \eta n^2 / 2 \mid \psi^*(B(\Tree_1)) > (b - \underline a) n \right) + e^{- \overline \mu \eta n^2}.
	\end{multline*}

	To control the second term of the right-hand side of~\eqref{eq:bound-d}, we observe that $(1)$ $S$ is equal to the largest supremum of the excursions above $\overline a n$ that start after $d_{\overline a n, b n}$ and end before $d_{0, b n}$; $(2)$ the number of such excursions is smaller than the number of nodes with label $= \varepsilon n$ in $\hat \Tree$; and $(3)$ the excursions above $\overline a n$ and starting after time $d_{\overline a n, b n}$ are i.i.d.\ with common distribution the first excursion of $\pi \circ X$ under $\P^n_1$. This leads to the bound
	\[ \P \left( S > (\varepsilon + \varepsilon') n \right) \leq \P \left( N^= \geq \kappa_0 n \mid \psi^*(B(\Tree_1)) > (b - \underline a) n \right) + \frac{C \kappa_0}{\varepsilon'}. \]
	
	In view of~\eqref{eq:estimate-g-d-1} and~\eqref{eq:estimate-g-d-2} we get the desired result for $d$. For $g$ we derive the exact same upper bound by considering $S = \sup \pi \circ X - \underline a n$, where the supremum is now taken over $[g_{\underline a n, b n}, g_{\overline a n, b n}]$. There is one minor difference, namely that excursions above $\overline a n$ that end before $g_{\overline a n, b n}$ are i.i.d., but with distribution the first excursion of $\pi \circ X$ above $\overline a n$ conditioned on having its height $< b n$. Since the probability of this event goes to one, this additional conditioning does not influence the final result.
\end{proof}

We now explain why the two previous lemmas imply the convergence of the vector $((\hat g^k, \hat d^k), k = 1, \ldots, p)$ (by which we mean that $((\hat g^k, \hat d^k), k = 1, \ldots, p) \Rightarrow^n ((\hat g^k, \hat d^k), k = 1, \ldots, p)$). First of all, the discussion in Section~\ref{sub:aside-hitting-times} shows that $\pi \circ X$ shifted at time $T_{a+u}$ converges. Thus by Lemma~\ref{lemma:g-d}, $d_{a, a + u}$, which is the hitting time of $(0, a]$ by the process $\pi \circ X$ shifted at time $T_{a+u}$, converges. Moreover, the arguments in the proof of Lemma~\ref{lemma:g-d} go through for $a = 0$, which shows that $D_{d_{a,a+u}}$ converges. Since $g_{a, a + u}$ is the hitting time of $(0,a]$ by the process $\pi \circ X$ shifted at time $T_{a+u}$ and run backward in time, and the mapping that to a function associates the same function run backward in time is continuous, we obtain for the same reasons the convergence of $g_{a, a+u}$.

Recall that $\epsilon^k$ is the $k$th excursion of $\pi \circ X$ above $a$ with height $> u$, and $g^k < d^k$ are its endpoints. Let also $T^k = \inf\{ t \geq g^k: \pi(X_t) \geq a+u \}$. The idea is now to iterate the above arguments by looking at the process $\pi \circ X$ shifted at time $d^k$. Let us look at $k = 1$, for which we have $(g^1, d^1, T^1) = (g_{a,a+u}, d_{a,a+u}, T_{a+u})$. Inspecting the proof of Lemma~\ref{lemma:T}, we see that $T^2$ converges: indeed, all that matters in the proof of Lemma~\ref{lemma:T} is the local behavior around $b$, for which the initial state $\nu$, as long as $\pi(\nu)$ is far below $b$, is irrelevant (note that this is the case when shifting $\pi \circ X$ at time $d^1$, since by definition $\pi(X_{d^1}) \leq a$).

Moreover, since the successive excursions above $a$ are i.i.d.\ by Lemma~\ref{lemma:discrete-reg}, Lemma~\ref{lemma:g-d} implies, since $T^2$ converges, that $d^2$, $g^2$ and $D_{d^2}$ converge. Iterating, we obtain the convergence of $d^k$, $g^k$ and $D_{d^k}$ for every $k \geq 1$. Finally, it is not hard to see that these convergences hold jointly, i.e., $((g^k, d^k, D_{d^k}), k \geq 1) \Rightarrow^n ((g^k, d^k, D_{d^k}), k \geq 1)$. There are two different ways to see this: either use arguments as in end of the discussion in Section~\ref{sub:aside-hitting-times}, or use the fact that the results of Lemmas~\ref{lemma:T} and~\ref{lemma:g-d} actually show more than just weak convergence, but actually that the limiting functions have no fixed point of discontinuity, and then use the continuous mapping theorem.

Having the joint convergence with the $D_{d^k}$'s makes it possible to know whether two successive excursions above $a$ with height $> u$ belong to the same excursion away from $0$. In particular, if $k^* \geq 0$ is the first index such that $D_{d^{k^*}} < D_{d^{k^*+1}} = \cdots = D_{d^{k^*+p}} < D_{d^{k^*+p+1}}$ (defining $d^0 = 0$), then $(\hat g^k, \hat d^k) = (g^{k^* + k}, d^{k^* + k})$ for $k = 1, \ldots, p$. From the convergence of $((g^k, d^k, D_{d^k}), k \geq 1)$ we obtain the convergence of $k^*$, which therefore entails the convergence of $((\hat g^k, \hat d^k), k = 1, \ldots, p)$ as desired. This finally achieves the proof of~\eqref{eq:goal-RP-1}.

\subsubsection{Proof of~\eqref{eq:goal-RP-2}} Let $m^k = \sup \pi \circ X$, where the supremum is taken over the interval $[d^k, D_{d^k}]$: then $(\hat \epsilon^k, k = 1, \ldots, p)$ is equal in distribution to $(\epsilon^k, k = 1, \ldots, p)$ conditionally on $\{ m^p < a + u < m^1, \ldots, m^{p-1} \}$ (which is to be understood as $\{m^1 < a+u \}$ when $p = 1$). In particular,
\[ \E^n \left( \prod_{k=1}^p f_k(\hat \epsilon^k) \right) = \frac{\E^n \left[ \prod_{k=1}^p f_k(\epsilon^k) ; m^p < a + u < m^1, \ldots, m^{p-1} \right]}{\P^n \left( m^p < a + u < m^1, \ldots, m^{p-1} \right)}. \]

For $\nu \in \Mcal_F$ we define
\[ A^n_q(\nu) = \E^n_\nu \left( \prod_{k=1}^q f_{k+p-q}(\epsilon^k) ; m^q < a+u < m^1, \ldots, m^{q-1} \right) \ \text{ for } \ q = 1, \ldots, p, \]
so that
\[ \E^n \left( \prod_{k=1}^p f_k(\hat \epsilon^k) \right) = \frac{A^n_p(\zero)}{A^{n,1}_p(\zero)} \]
with $A^{n,1}_p$ defined similarly as $A^n_p$ by taking all the $f_k$'s equal to the constant function which takes value one. Moreover, let us introduce for $\nu \in \Mcal_F$
\[ \left \{ \begin{array}{l}
	\displaystyle B^n_q(\nu) = \E^n_\nu \left( \prod_{k=1}^q f_{k+p-q}(\epsilon^k) ; g^1 < D_0, m^q < a+u < m^1, \ldots, m^{q-1} \right)\\
	\hspace{90mm}\text{for } q = 1, \ldots, p-1,\\
	\displaystyle B^n_0(\nu) = \P^n_\nu \left( D_0 < d^1 \right)\\
	\hspace{90mm}\text{for } q = 0,\\
	\displaystyle \Delta^n_q(\nu) = \E^n_\nu \left[ f_{p-q}(\epsilon^1) \times \left( B^n_q(X^n_{d^1}) - B^n_q(X^n_{g^1-})\right) \right]\\
	\hspace{90mm}\text{for } q = 0, \ldots, p-1
\end{array} \right. \]
(recall that $\nu^n \in \Mcal_F$ is the measure such that $\vartheta_n(\nu^n) = \nu$) and finally
\[ \varphi^n_q = \E^n \left[ f_{p-q+1}(\epsilon^1) \right] \ \text{ for } \ q = 1, \ldots, p. \]

Note that Lemma~\ref{lemma:discrete-reg} together with Lemmas~\ref{lemma:continuity-shift+stopping},~\ref{lemma:T},~\ref{lemma:g-d} and the definition of $\Ncal$ imply that
\begin{equation} \label{eq:conv-varphi}
	\varphi^n_q \mathop{\longrightarrow}_{n \to +\infty} \Ncal\left(f_{p-q+1}(\epsilon) \mid \sup \epsilon > u \right) \ \text{ for } \ q = 1, \ldots, p.
\end{equation}

We now derive some relations between all these quantities. First of all,
\begin{equation} \label{eq:B-A}
	B^n_q(\nu) = A^n_q(\nu) - B^n_0(\nu) A^n_q(\zero) \ \text{ for } \ q = 1, \ldots, p-1.
\end{equation}

Indeed, for $q = 1, \ldots, p-1$ we have
\begin{multline*}
	B^n_q(\nu) = A^n_q(\nu)\\
	- \E^n_\nu \left( \prod_{k=1}^q f_{k+p-q}(\epsilon^k) ; D_0 < g^1, m^q < a+u < m^1, \ldots, m^{q-1} \right),
\end{multline*}
and since $\pi \circ X$ regenerates at $D_0$, the second term of the above right-hand side is equal to
\[ \P^n_\nu(D_0 < g^1) \E^n \left( \prod_{k=1}^q f_{k+p-q}(\epsilon^k) ; m^q < a+u < m^1, \ldots, m^{q-1} \right). \]

Since the two events $\{D_0 < g^1\}$ and $\{D_0 < d^1\}$ coincide, we obtain~\eqref{eq:B-A}. Second, for $\nu \in \Mcal_F$ with $\pi(\nu) < an$, we have
\begin{equation} \label{eq:A-B}
	A^n_q(\nu) = \varphi^n_q \times \E^n_\nu \left[ B^n_{q-1}(X^n_{g^1-}) \right] + \Delta^n_{q-1}(\nu) \ \text{ for } \ q = 1, \ldots, p.
\end{equation}
Indeed, the strong Markov property at time $d^1$ gives for $q = 1, \ldots, p$
\begin{align*}
	A^n_q(\nu) & = \E^n_\nu \left[ f_{p-q+1}(\epsilon^1) \times B^n_{q-1}(X^n_{d^1}) \right]\\
	& = \E^n_\nu \left[ f_{p-q+1}(\epsilon^1) \times B^n_{q-1}(X^n_{g^1-}) \right] + \Delta^n_{q-1}(\nu).
\end{align*}
Since $\pi(\nu) < an$, under $\P^n_\nu$, $\epsilon^1$ and $X_{g^1-}$ are independent, and $\epsilon^1$ is distributed according to $\epsilon^1$ under $\P^n$, which gives~\eqref{eq:A-B}. Combining~\eqref{eq:B-A} and~\eqref{eq:A-B}, we end up with the following recursion for $A^n_q$:
\begin{multline} \label{eq:recursion}
	A^n_q(\nu) = \varphi^n_q \times \left[ \E^n_\nu \left( A^n_{q-1}(X^n_{g^1-}) \right) - A^n_{q-1}(\zero) \E^n_\nu \left( B^n_0(X^n_{g^1-}) \right) \right]\\
	+ \Delta^n_{q-1}(\nu)
\end{multline}
for $q = 2, \ldots, p$, and with the boundary condition
\begin{equation} \label{eq:boundary-condition}
	A^n_1(\nu) = \varphi^n_1 \times \E^n_\nu \left[ B^n_0(X^n_{g^1-}) \right] + \Delta^n_0(\nu).
\end{equation}

Since the functions $f_k$ were arbitrary in deriving this recursion, we obtain a similar recursion for $A^{n,1}_q(\nu)$, but with all the terms $\varphi^n_q$ replaced by one and the $\Delta^n_q(\nu)$'s replaced by $\Delta^{n,1}_q(\nu)$, defined similarly as $\Delta^n_q(\nu)$ but with the functions $f_q$ equal to the constant function taking value one. Now consider $\tilde A^n_q$ and $\tilde A^{n,1}_q$ that satisfy the same recursion~\eqref{eq:recursion}--\eqref{eq:boundary-condition}, but with all the $\Delta^n_q$ equal to $0$, i.e., for every $\nu \in \Mcal_F$,
\[ 	\tilde A^n_q(\nu) = \varphi^n_q \times \left[ \E^n_\nu \left( \tilde A^n_{q-1}(X^n_{g^1-}) \right) - \tilde A^n_{q-1}(\zero) \E^n_\nu \left( B^n_0(X^n_{g^1-}) \right) \right], \ q = 2, \ldots, p, \]
with the boundary condition
\[ \tilde A^n_1(\nu) = \varphi^n_1 \times \E^n_\nu \left[ B^n_0(X^n_{g^1-}) \right], \]
and similarly for $\tilde A^{n,1}_q(\nu)$ with all the terms $\varphi^n_q$ replaced by one. By induction one gets
\[ \frac{\tilde A^n_p(\zero)}{\tilde A^{n,1}_p(\zero)} = \prod_{k = 1}^p \varphi^n_k \]
and so~\eqref{eq:conv-varphi} implies that
\begin{equation} \label{eq:conv-tilde}
	\frac{\tilde A^n_p(\zero)}{\tilde A^{n,1}_p(\zero)} \mathop{\longrightarrow}_{n \to +\infty} \prod_{k=1}^p \Ncal \left( f_k(\epsilon) \mid \sup \epsilon > u \right).
\end{equation}

On the other hand, for $\nu$ with $\pi(\nu) < a n$, we have $A^n_1(\nu) - \tilde A^n_1(\nu) = \Delta^n_0(\nu)$ while for $q = 2, \ldots, p$,
\begin{multline*}
	\big \lvert A^n_q(\nu) - \tilde A^n_q(\nu) \big \rvert \leq \varphi^n_q \E^n_\nu \left( \big \lvert A^n_{q-1}(X^n_{g^1-}) - \tilde A^n_{q-1}(X^n_{g^1-}) \big \rvert \right)\\
	+ \varphi^n_q \E^n_\nu \left( B^n_0(X^n_{g^1-}) \right) \big \vert A^n_{q-1}(\zero) - \tilde A^n_{q-1}(\zero) \big \rvert + \big \lvert \Delta^n_{q-1}(\nu) \big \rvert.
\end{multline*}

Thus if $\varepsilon^{(n)} = \max_{q = 0, \ldots, p-1} \varepsilon^{(n)}_q$ with
\[ \varepsilon^{(n)}_q = \sup_{\nu \in \Mcal_F: \pi(\nu) < a n} \big \lvert \Delta^n_q(\nu) \big \rvert \ \text{ for } \ q = 0, \ldots, p-1, \]
then by induction we obtain $\lvert A^n_p(\zero) - \tilde A^n_p(\zero) \big \rvert \leq C \varepsilon^{(n)}$ for some finite constant $C$, and a similar upper bound holds for $\lvert A^{n,1}_p(\zero) - \tilde A^{n,1}_p(\zero) \rvert$ (note that, to perform the induction, we use the fact that $\pi(X^n_{g^1-}) < an$ $\P^n_\nu$-almost surely, for any $\nu$ with $\pi(\nu) < an$). In view of~\eqref{eq:conv-tilde}, the following lemma therefore achieves the proof of~\eqref{eq:goal-RP-2}.

\begin{lemma} \label{lemma:varepsilon}
	$\varepsilon^{(n)} \to 0$ as $n \to +\infty$.
\end{lemma}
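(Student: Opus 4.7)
The plan to show $\varepsilon^{(n)} \to 0$ rests on a key observation about the excursion $[g^1,d^1]$ under $\P^n_\nu$: since $\pi \circ X$ stays strictly above $a$ on this interval and displacements take values in $[-j^*/n,1/n]$, every order added or removed during the excursion lies at a position strictly above $a - j^*/n$. Thus $X^n_{d^1}$ and $X^n_{g^1-}$ have identical restrictions to $[0, a - j^*/n]$, both carry price exactly $a$, and their difference is a non-negative finite point measure supported on the vanishing window $(a - j^*/n, a]$, with total mass coupled via Theorem~\ref{thm:coupling} to $\lvert \Kcal(\Tree_{a+1}) \rvert$.

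Given this localization, I would construct a coupling of two copies $Y$ and $Y'$ of the dynamics started respectively from $X^n_{d^1}$ and $X^n_{g^1-}$, driven by shared Poisson clocks and displacements. Since the initial measures agree on $[0, a - j^*/n]$, the only obstruction to $Y \equiv Y'$ is the extra mass carried by $Y$ in $(a - j^*/n, a]$. Heuristically, each extra order forces $Y$ to spend one additional removal step before its price can drop past the corresponding level, so the coupling time $\tau^*$ after which the two trajectories coincide is bounded by the time needed to digest this extra mass; via Theorem~\ref{thm:coupling} and the tail bounds on $\lvert \Kcal(\Tree_1) \rvert$ collected in the Appendix, $\tau^*$ is $o(1)$ in the rescaled time, uniformly in $\nu$. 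On $\{\tau^* \leq T\}$ with $T \to 0$, the functionals entering $B^n_q$, namely the excursions $\epsilon^k$ above $a$ with height $> u$ and the suprema $m^k$, coincide after the initial segment $[0,T]$, whose contribution is controlled by the uniform boundedness of the $f_k$'s.

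The principal obstacle is that a small coupling discrepancy could in principle flip one of the conditioning events defining $B^n_q$: the inequality $g^1 < D_0$ and the alternating relations $m^q < a + u < m^{q-1}, \ldots, m^1$ are sensitive to whether a particular excursion reaches height $u$ or whether a hitting time falls before or after $D_0$. Ruling this out is precisely the content of the hitting-time continuity already established in Lemmas~\ref{lemma:T} and~\ref{lemma:g-d}, combined with the reasoning of Section~\ref{sub:aside-hitting-times}, which show that the set of trajectories on which such flips occur has vanishing $\Pbf$-probability. Putting these estimates together should yield $\Delta^n_q(\nu) \to 0$ uniformly in $\nu \in \Mcal_F$ with $\pi(\nu) < an$, and hence $\varepsilon^{(n)} \to 0$.
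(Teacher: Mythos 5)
Your opening observation is correct and matches the paper: on $[g^1,d^1]$ the price stays strictly above $a$, so only positions $>a-j^*/n$ are touched, $X^n_{d^1}$ and $X^n_{g^1-}$ agree on $[0,a-j^*/n]$, and their difference $\Xi$ is a non-negative point measure supported on $(a-j^*/n,a]$ whose mass is coupled to $\lvert\Kcal\rvert$ via Theorem~\ref{thm:coupling}. After this point, however, the argument has several genuine gaps.

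The ``shared Poisson clocks and displacements'' coupling does not coalesce the two processes. Both $Y$ and $Y'$ start with price $a$, but the extra $\Xi$-orders sit right at, or just below, that price. After the first few shared removal events, $Y'$ (which has fewer orders near $a$) will typically have its price drop to a strictly lower level than $Y$'s. From that moment the two dynamics see different prices: shared displacements land new orders at different positions, and shared removal clocks delete orders at different positions. The two trajectories thus diverge permanently rather than coalesce, and the picture of $Y$ ``digesting'' the extra mass and then matching $Y'$ is not how either the naive dynamics or the tree coupling behaves. Moreover, even granting a time-changed identification (which is what the forest version of Theorem~\ref{thm:coupling} actually gives), the time needed to explore the extra trees rooted near level $an$ is not $o(1)$ uniformly: those trees are critical Galton--Watson trees with heavy-tailed size, so the associated ``coupling time'' has infinite mean and cannot be squeezed to zero. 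The correct observation is not that the extra trees are \emph{explored quickly}, but that they \emph{rarely reach} level $(a+u)n$ — an event bound $M(\Xi)\,\P(\psi^*(\Tree_0)>un)$, which vanishes for entirely different reasons than a coupling-time argument would give.

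Second, the claim that the functionals inside $B^n_q$ ``coincide after the initial segment $[0,T]$'' does not fit their structure: $B^n_q$ depends on the first $q$ excursions above $a$ with height $>u$ and on the events $\{g^1<D_0\}$, $\{m^k \lessgtr a+u\}$ — quantities determined by the full trajectory up to a macroscopic time (the first return to $0$ happens on the $O(1)$ time scale after rescaling, and $\epsilon^1$ can begin at any time). Finally, invoking Lemmas~\ref{lemma:T},~\ref{lemma:g-d} and the discussion of Section~\ref{sub:aside-hitting-times} to rule out the ``flipping'' of conditioning events misapplies those results: they establish continuity of hitting times under the \emph{fixed} laws $\P^n$ as $n\to\infty$, not a modulus of continuity of $B^n_q(\cdot)$ in the initial measure $\nu$, uniformly over $\nu$ with $\pi(\nu)<an$ for fixed $n$. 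The paper handles the uniformity cleanly precisely because its bound $\lvert B^n_q(\nu)-B^n_q(\nu+\tilde\nu)\rvert\le C\,M(\tilde\nu)\,\P(\psi^*(\Tree_0)>un)$ does not depend on $\nu$; your proposed route supplies no comparable uniform control.
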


\begin{proof}
	By definition we have for $q = 0, \ldots, p-1$
	\[ \lvert \Delta^n_q(\nu) \rvert \leq C \, \E^n_\nu \left[ \left\lvert B^n_q(X^n_{g^1-}) - B^n_q(X^n_{d^1}) \right\rvert \right] \]
	with $C = \max_{q=1,\ldots,p} \sup f_q$. To control the difference appearing in this last expectation, we use the following observation: under $\P^n_\nu$ for any $\nu \in \Mcal_F$ with $\pi(\nu) < an$, we can write $X^n_{d^1} = X^n_{g^1-} + \Xi$, where $\Xi \in \Mcal_F$ corresponds to the orders added below $a$ during the first excursion of $\pi \circ X$ above $a$ with height $> u$. In particular, the coupling implies that $\Xi$ is independent from $X^n_{g^1-}$, its law does not depend on $\nu$ and $M(\Xi)$ is equal in distribution to $\lvert \Kcal(\Tree_1) \rvert$ conditioned on $\psi^*(B(\Tree_1)) > u n$. In particular,
	\begin{equation} \label{eq:bound-varepsilon}
		\varepsilon^{(n)}_q \leq C \, \E^n \left[ \sup_{\nu \in \Mcal_F: \pi(\nu) < a n} \left\lvert B^n_q(\nu) - B^n_q(\nu + \Xi) \right\rvert \right].
	\end{equation}
	
	Thus we need to control terms of the form $B^n_q(\nu) - B^n_q(\nu + \tilde \nu)$ uniformly in $\nu \in \Mcal_F$ with $\pi(\nu) < an$, where $\tilde \nu$ plays the role of $\Xi$. In view of the definition of $B^n_q$, we thus need to understand the difference between $X$ under $\P^n_\nu$ and $\P^n_{\nu + \tilde \nu}$. More precisely, all the events and random variables involved in the computation of $B^n_q$ depend on $X$ stopped at $D_0$, and so we actually only need to compare the processes $(X_t, 0 \leq t \leq D_0)$ under $\P^n_\nu$ and $\P^n_{\nu + \tilde \nu}$.
	
	In order to do so we extend the coupling of Theorem~\ref{thm:coupling}: recall that this coupling couples $X$ under $\P_a$ with $\Tree_a$. Using this coupling, it is straightforward to couple $X$ under $\P_\nu$ with a \emph{forest} of trees $\Fcal(\nu) = (\Tcal_{(k)}, k = 1, \ldots, M(\nu))$ such that the trees $\Tcal_{(k)}$ are independent, and if $\nu = \sum_a \varsigma_a \delta_a$, then exactly $\varsigma_a$ of the trees $\Tcal_{(k)}$ are distributed like $\Tcal_a$. This coupling relies on extending the map $\Phi$ to make it act on forests in an obvious manner.
	
	This coupling between $\P_\nu$ and $\Fcal(\nu)$ provides a coupling between $\P_\nu$ and $\P_{\nu + \tilde \nu}$ as follows: first, one considers the forest $\Fcal(\nu)$ used to construct $X$ under $\P_\nu$. Then, one adds $M(\tilde \nu)$ independent trees to this forest, say $(\tilde \Tree_{(k)}, k = 1, \ldots, M(\tilde \nu))$, such that if $\tilde \nu = \sum_p \tilde \varsigma_p \delta_p$ then exactly $\tilde \varsigma_a$ of these trees are distributed according to $\Tcal_a$. We thus get a larger forest, say $\tilde \Fcal = \Fcal(\nu) \cup \{ \tilde \Tree_{(k)}, k = 1, \ldots, M(\tilde \nu) \}$, and by exploring this forest with successive iterations of $\Phi$ we get a new process $\tilde X$ on the same probability space that $X$. By construction and thanks to Theorem~\ref{thm:coupling}, this process is a version of $X$ under $\P_{\nu + \tilde \nu}$.
	
	Note moreover that, as mentioned previously, we are only interested in $X$ before time $D_0$. In particular, we can truncate the trees $\Tree_{(k)}$ and $\tilde \Tree_{(k)}$ by removing all the nodes that have a label $\leq 0$. It is thus convenient to consider the operator $B_0: \T \to \T$ that removes all the nodes of a tree $\tree \in \T$ with label $\leq 0$, as well as their descendants.
	
	If $\tilde \epsilon^k$, $\tilde g^1$, $\tilde m^k$ and $\tilde D_0$ are the quantities associated to $\tilde X$ in the same way that $\epsilon^k$, $g^1$, $m^k$ and $D_0$ are associated to $X$, then using the definition of $B^n_q$ we have
	\[ B^n_0(\nu) - B^n_0(\nu + \tilde \nu) = \E^n_\nu \left( \indicator{D_0 < d^1} - \indicator{\tilde D_0 < \tilde d^1} \right) \]
	for $q = 0$, while for $q = 1, \ldots, p-1$, $B^n_q(\nu) - B^n_q(\nu + \tilde \nu)$ is equal to
	\begin{multline*}
		\E^n_\nu \Bigg( \prod_{k=1}^q f_k(\epsilon^k) \indicator{g^1 < D_0, m^q < a+u < m^1, \ldots, m^{q-1}}\\
		- \prod_{k=1}^q f_k(\tilde \epsilon^k) \indicator{\tilde g^1 < \tilde D_0, \tilde m^q < a+u < \tilde m^1, \ldots, \tilde m^{q-1}} \Bigg).
	\end{multline*}
	Now the key observation is that in the event $\{ \max_k \psi^*(B_0(\tilde \Tree_{(k)})) < (a+u) n \}$, the two random variables (the one defined in terms of $X$ and the one defined in terms of $\tilde X$) in the previous expectation are equal. Indeed, in this event, the excursions above $a$ with height $> u$ for $X$ and $\tilde X$ coincide. In particular, since the random variables under consideration are bounded, we obtain
	\[ \lvert B^n_q(\nu) - B^n_q(\nu + \tilde \nu) \rvert \leq C \P^n_\nu \left( \max_{k = 1, \ldots, M(\tilde \nu)} \psi^*(B_0(\tilde \Tree_{(k)})) > (a+u) n \right). \]
	
	Recall that the trees $\tilde \Tree_{(k)}$ are independent. Further, $\psi^*(B_0(\Tree_y)) \leq \psi^*(\Tree_y)$, and $\psi^*(\Tree_y)$ is (stochastically) increasing in $y$, so that using the union bounds we get for any $\tilde \nu \in \Mcal_F$ with $\pi(\tilde \nu) < an$
	\[ \P^n_\nu \left( \max_{k = 1, \ldots, M(\tilde \nu)} \psi^*(B_0(\tilde \Tree_{(k)})) > (a+u) n \right) \leq M(\tilde \nu) \P \left( \psi^*(\Tree_0) > u n \right). \]

	In view of~\eqref{eq:bound-varepsilon} and the discussion preceding it, we therefore get
	\[ \varepsilon^{(n)}_q \leq C \E \left( \lvert \Kcal(\Tree_1) \mid \psi^*(B(\Tree_1)) > un \right) \times \P \left( \psi^*(\Tree_0) > u n \right). \]
	
	The supremum over $n \geq 1$ of the expectation in the above right-hand side is finite by~\eqref{eq:kcal}, and since $\P(\psi^*(\Tree_0) > un) \to 0$ as $n \to +\infty$, the result is proved.
\end{proof}

\mysubsection{$\pi \circ X$ under $\Pbf$ is a reflected Brownian motion} \label{step:price=RBM}

At this point, we know that $\Ncal$ is a $\sigma$-finite measure on $\Ecal$ that satisfies the following properties:
\begin{enumerate}[label=\Roman*)]
	\item $\Ncal(\zeta = + \infty) = 0$ (since $\pi \circ X$ under $\Pbf$ is upper bounded by $M \circ X$ by Lemma~\ref{lemma:joint-convergence}, which by Lemma~\ref{lemma:preliminary} is a Brownian motion with no drift reflected at $0$);
	\item $\Ncal(\epsilon \text{ is not continuous}) = 0$ (since $\pi \circ X$ under $\Pbf$ is almost surely continuous).
\end{enumerate}

In particular, $\Ncal$ induces a $\sigma$-finite measure $\Theta$ on the set of compact real trees via the usual coding of a compact real tree by a continuous excursion with finite length, see for instance~\cite[Section~$3$]{Le-Gall12:0}.
\\

Further, let $y > 0$ and $\epsilon^1$ be the first excursion of $\pi \circ X$ away from $0$ with height $> y$. Then by Lemmas~\ref{lemma:T} and~\ref{lemma:g-d}, we have $\P^n(\sup \epsilon^1 > x) \to \Pbf(\sup \epsilon^1 > x)$ for all $x$ outside a countable set, where this latter quantity is equal to $\Ncal(\sup \epsilon > x \mid \sup \epsilon > y)$ by definition of $\Ncal$. On the other hand,
\[ \P^n \left( \sup \epsilon^1 > x \right) = \P \left( \psi^*(B(\Tree_1)) > x n \mid \psi^*(B(\Tree_1)) > y n \right) \]
which, for any $0 < y < x$, converges to $y/x$ by Lemma~\ref{lemma:tail-psi}. Thus for all $x > y$ outside a countable set, we have
\[ \Ncal(\sup \epsilon > x \mid \sup \epsilon > y) = \frac{\Ncal(\sup \epsilon > x)}{\Ncal(\sup \epsilon > y)} = \frac{y}{x} \]
from which one deduces that $\Ncal(\sup \epsilon > x) = c / x$ for every $x > 0$, and for some finite constant $c > 0$ (this constant will be fixed shortly). Thus $\Ncal$ satisfies the following additional properties:
\begin{enumerate}[label=\Roman*)] \setcounter{enumi}{2}
	\item \label{prop:N-1} $\Ncal(\sup \epsilon = 0) = 0$ (by definition of an excursion measure);
	\item \label{prop:N-2} $0 < \Ncal(\sup \epsilon > x) < +\infty$ for every $x > 0$ (since $\Ncal(\sup \epsilon > x) = c/x$);
	\item \label{prop:N-3} $\Ncal(\Ecal) = +\infty$ (obtained by letting $x \downarrow 0$ in $\Ncal(\sup \epsilon > x) = c/x$);
	\item \label{prop:N-4} $\Ncal$ satisfies the regenerative property~\RP~(by the previous step).
\end{enumerate}

Properties~\ref{prop:N-1}--\ref{prop:N-3} above immediately translate to $\Theta$ having infinite mass, $\Theta(\Hcal = 0) = 0$ and $\Theta(\Hcal > x) \in (0,\infty)$ (where $\Hcal$ denotes the height of the canonical tree $\stree$). Moreover, the last property~\ref{prop:N-4} means exactly that $\Theta$ satisfies the property~\RP~of~\cite{Weill07:0}: indeed, excursions of $\epsilon$ above level $a$ under $\Ncal$ correspond to the subtrees of $\stree$ above level $a$ under $\Theta$. Finally, we see that the assumptions of Theorem~$1.1$ in~\cite{Weill07:0} are satisfied, which gives the existence of a spectrally positive L\'evy process $Y$, with Laplace exponent $\Psi$ satisfying $\int^\infty (1/\Psi) < +\infty$, such that $\Theta$ is the (excursion) law of the $\Psi$-L\'evy tree. In particular, $\Ncal$ is an excursion measure of the height process associated to $Y$.

We now fix the normalization constant as in~\cite{Duquesne02:0} (which amounts to choosing the constant $c$ above), so that according to Corollary~$1.4.2$ in~\cite{Duquesne02:0} (remember that $\int^\infty (1/\Psi) < +\infty$) we have
\[ \int_{\Ncal(\sup \epsilon > x)}^\infty \frac{\d u}{\Psi(u)} = x, \ x > 0, \]
which implies, since $\Ncal(\sup \epsilon > x) = c/x$, that $\Psi(u) = u^2 / c$. In other words, $Y$ is equal in distribution to $(2/c)^{1/2} \tilde W$, with $\tilde W$ a standard Brownian motion, and the height process associated to this L\'evy process is equal in distribution to $(2 c)^{1/2} W$ (to see this, consider for instance the CSBP $Z$ associated to $Y$, which has branching mechanism $\Psi$ and satisfies the SDE $\d Z_t = (2 Z_t / c)^{1/2} \d \tilde W_t$, and use~$(20)$ and~$(21)$ in Pardoux and~\cite{Pardoux11:0}). Since $\pi \circ X$ under $\Pbf$ is equal in distribution to the height process of $Y$, we obtain that $\pi \circ X$ under $\Pbf$ is equal in distribution to $(2c)^{1/2} W$. The following lemma makes it possible to identify $c$ and, more importantly, to conclude the proof of Theorem~\ref{thm:main}.

\begin{lemma} \label{lemma:asymptotic-local-time}
	For any $\eta > 0$,
	\[ \limsup_{n \to +\infty} \P^n \left( \left\lvert L^{n, \pi}_t - \frac{1}{\varepsilon} \int_0^t \indicator{\pi(X_u) \leq \varepsilon} \d u \right\rvert \geq \eta \right) \mathop{\longrightarrow}_{\varepsilon \downarrow 0} 0. \]
\end{lemma}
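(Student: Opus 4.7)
The plan is to bound the mean of the difference using the excursion-theoretic compensation formula from the proof of Lemma~\ref{lemma:lebesgue-measure}, show it tends to zero, and upgrade to convergence in probability by a second-moment bound exploiting the i.i.d.\ excursion structure (Lemma~\ref{lemma:discrete-reg}). Using $\ell_t = L^{n,\pi}_t/n$, first split
\[ L^{n,\pi}_t - \frac{1}{\varepsilon} \int_0^t \indicator{\pi(X_u) \leq \varepsilon} \d u = \left(1 - \frac{1}{\varepsilon n}\right) L^{n,\pi}_t - \frac{1}{\varepsilon} \int_0^t \indicator{0 < \pi(X_u) \leq \varepsilon} \d u. \]
For fixed $\varepsilon > 0$ the factor $(\varepsilon n)^{-1}$ vanishes as $n \to +\infty$ while $L^{n,\pi}_t$ is tight by Lemma~\ref{lemma:preliminary}, so it suffices to show that $\varepsilon^{-1} \int_0^t \indicator{0 < \pi(X_u) \leq \varepsilon} \d u - L^{n,\pi}_t$ tends to zero in probability after $n \to +\infty$ then $\varepsilon \to 0$.

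Applying~\eqref{eq:compensation-formula} with $F(\epsilon) = \int_0^{\zeta(\epsilon)} \indicator{\epsilon_u \leq \varepsilon} \d u$ yields
\[ \E^n\!\left[ \int_0^{D_t} \indicator{0 < \pi(X_u) \leq \varepsilon} \d u \right] = \lambda \P(J=1) \, \E^n(L^{n,\pi}_t) \cdot n \, \E^n_1\!\left[\int_0^{D_0} \indicator{\pi(X_u) \leq \varepsilon} \d u\right], \]
the difference between integrating on $[0,t]$ and on $[0,D_t]$ being a straddling-excursion contribution controllable by the same reasoning as around~\eqref{eq:bound-D}. Rescaling and the tree coupling of Theorem~\ref{thm:coupling}, combined with the identity around~\eqref{eq:factor-2}, then reduce $\lambda \P(J=1) \cdot n \, \E^n_1[\cdots]$, up to a $O(1/n)$ boundary/killed-node term, to $(\P(J=1)/n) \, \E[\#\{v \in B(\Tree_1) \setminus \Kcal(\Tree_1) : \psi(v, \Tree_1) \leq \varepsilon n\}]$.

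The main technical ingredient, which I expect to derive in Appendix~\ref{appendix}, is the renewal asymptotic
\[ \E\!\left[\#\{v \in B(\Tree_1) \setminus \Kcal(\Tree_1) : \psi(v, \Tree_1) \leq K\}\right] \mathop{\sim}_{K \to +\infty} \frac{K}{\P(J=1)}. \]
By the many-to-one formula, the left-hand side equals $\sum_{p=1}^K V(p)$ with $V(p) = \sum_{k \geq 0} \P(S_k = p, \ S_1, \ldots, S_{k-1} \geq 1)$ for the random walk $(S_k)$ starting at $S_0 = 1$ with i.i.d.\ increments distributed as $J$. Since $J \leq 1$, the ascending ladder heights of $(S_k)$ equal $1$ almost surely; decomposing $V(p)$ at the first visit to $p$ and invoking the local renewal theorem gives $V(p) \to 1/\P(J=1)$ as $p \to +\infty$, once one knows the identity $\rho := \P(S_k \geq 1 \text{ for all } k) = \E(J)/\P(J=1)$, which is itself a consequence of $\E(\lvert \Kcal(\Tree_1) \rvert) = 1 - \rho$ combined with~\eqref{eq:mean-killed}. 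Plugging back into the compensation identity then yields $\varepsilon^{-1} \E^n\!\left[\int_0^t \indicator{0 < \pi(X_u) \leq \varepsilon} \d u\right] - \E^n(L^{n,\pi}_t) \to 0$.

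To upgrade from expectations to convergence in probability, write the occupation integral on $[0, D_t]$ as a sum of i.i.d.\ excursion contributions (Lemma~\ref{lemma:discrete-reg}) and apply Doob's optional sampling to the martingale of centered partial sums; the variance is then bounded by the expected number of completed excursions times a per-excursion second-moment bound coming from a companion tree computation analogous to the first-moment one. Markov's inequality combined with the first-moment asymptotic concludes. The main obstacle is the precise renewal asymptotic of the preceding paragraph, since it requires a careful treatment of the barrier in $B(\Tree_1)$ rather than the order-of-magnitude estimates that have sufficed so far.
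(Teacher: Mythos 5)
Your plan tracks the paper's own argument very closely: both decompose the occupation time into i.i.d.\ excursion contributions, isolate a drift term governed by the first-moment asymptotic $\tfrac{1}{y}\E\bigl(\sum_{v\in B(\Tree_1)}\indicator{\psi(v,\Tree_1)\leq y}\bigr)\to 1/\P(J=1)$ (your renewal asymptotic is equivalent to the paper's~\eqref{eq:m}, since $\E|\Kcal(\Tree_1)|<+\infty$ makes the $B$-versus-$B\setminus\Kcal$ distinction a lower-order correction), and control the fluctuation term by a CLT-type second-moment bound on the per-excursion centered contribution. Your ``martingale of centered partial sums + Doob + per-excursion second moment'' is exactly the paper's introduction of $m(y)$, $\Upsilon^k(y)$, $\sigma(y)$ and the requirement $C^* := \sup_y \sigma(y)/y^{1/2} < +\infty$.

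Two differences are worth recording. First, you route the first-moment identity through the compensation formula~\eqref{eq:compensation-formula}; the paper instead works directly with the renewal sequence $(E^k, V^k)$, which lets it run the same bookkeeping for both the mean and the variance at once. Second, your derivation of the first-moment asymptotic via ascending ladder heights (which are a.s.\ equal to $1$ since $J\leq 1$) and the local renewal theorem is actually a cleaner and more rigorous route than the paper's argument for~\eqref{eq:m}, which passes somewhat informally through the law of large numbers for $S_m/m$ conditionally on $\{\underline S_m \geq 0\}$. You also correctly reduce the constant $\rho = \P(\underline S_\infty\geq 0)$ to the skip-free identity used in Lemma~\ref{lemma:number-killed}.

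The one genuine gap is that the second-moment bound $\sigma(y)^2 \leq C y$ is not ``a companion tree computation analogous to the first-moment one'' — the first-moment estimate is a law-of-large-numbers statement, while the variance estimate requires showing $\Var(N(y)) = O(y^3)$ where $N(y)=\#\{v\in B(\Tree_1): \psi(v,\Tree_1)\leq y\}$, which the paper does by first establishing $\Var(Z_1+\cdots+Z_n) = O(n^3)$ through a recursion on the covariances $\E[(Z_{n+1}-1)\sum_{k\leq n}(Z_k-1)]$ (Section~\ref{sub:various}). Without the cubic variance bound the CLT normalization $K(n^2t)^{1/2}\sigma(\varepsilon n)\approx n\sqrt\varepsilon$ does not come out, so this step deserves more than a gesture. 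Everything else in your outline is sound.
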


\begin{proof}
	Writing $\int_0^t \indicator{\pi(X_u) \leq \varepsilon} \d u = \ell_t + \int_0^t \indicator{0 < \pi(X_u) \leq \varepsilon} \d u$ and using the triangular inequality, we first obtain
	\begin{multline*}
		\P^n \left( \left\lvert L^{n, \pi}_t - \frac{1}{\varepsilon} \int_0^t \indicator{\pi(X_u) \leq \varepsilon} \d u \right\rvert \geq \eta \right) = \P^n \left( L^{n,\pi}_t \geq \eta \varepsilon n / 2 \right)\\
		+ \P^n \left( \left\lvert L^{n, \pi}_t - \frac{1}{\varepsilon} \int_0^t \indicator{0 < \pi(X_u) \leq \varepsilon} \d u \right\rvert \geq \eta / 2 \right).
	\end{multline*}
	
	The first term of the above upper bound goes to $0$ by Lemma~\ref{lemma:control-mean-local-time}, and so we need to control the second term. Rescaling leads to
	\begin{multline*}
		\P^n \left( \left\lvert L^{n, \pi}_t - \frac{1}{\varepsilon} \int_0^t \indicator{0 < \pi(X_u) \leq \varepsilon} \d u \right\rvert \geq \eta / 2 \right)\\
		= \P \left( \left\lvert \int_0^{n^2t} \indicator{\pi(X_u) = 0} \d u - \frac{1}{\varepsilon n} \int_0^{n^2t} \indicator{0 < \pi(X_u) \leq \varepsilon n} \d u \right\rvert \geq \eta n / 2 \right).
	\end{multline*}
	
	Let as in the proof of Lemma~\ref{lemma:control-mean-local-time} $K(y)$ be the number of excursions of $\pi \circ X$ away from $0$ that end before time $y$, $E^k$ be the time that $\pi \circ X$ stays at $0$ before the $k$th excursion and $V^k(y)$ be the time spent exploring nodes with label $\leq y$ in the $k$th ambient tree: then if $\pi(X_{n^2 t}) = 0$, we have
	\begin{multline*}
		\left\lvert \int_0^{n^2t} \indicator{\pi(X_u) = 0} \d u - \frac{1}{\varepsilon n} \int_0^{n^2t} \indicator{0 < \pi(X_u) \leq \varepsilon n} \d u \right\rvert \leq E^{K(n^2 t)+1}\\
		+ \left\lvert \sum_{k=1}^{K(n^2 t)} \left( E^k - \frac{1}{\varepsilon n} V^k(\varepsilon n) \right) \right\rvert.
	\end{multline*}
	
	If $\pi(X_{n^2 t}) > 0$, then the residual term, instead of being $E^{K(n^2 t)+1}$, is the time spent exploring nodes with label $\leq \varepsilon n$ in the $K(n^2t)$th ambient tree. In each case, one can show that this residual term does not contribute in the regime $n \to +\infty$ and then $\varepsilon \to 0$ that we are interested in, and so we only have to show that
	\[ \limsup_{n \to +\infty} \P \left( \left\lvert \sum_{k=1}^{K(n^2 t)} \left( E^k - \frac{1}{\varepsilon n} V^k(\varepsilon n) \right) \right\rvert \geq \eta n \right) \mathop{\longrightarrow}_{\varepsilon \downarrow 0} 0. \]
	
	For $y > 0$ introduce the following quantities: $m(y) = \E(E^1) - \E(V^1(y)) / y$, $\Upsilon^k(y) = E^k - V^k(y) / y - m(y)$, $\sigma(y)^2 = \E(\Upsilon^1(y)^2)$, $\overline \Upsilon^k(y) = \Upsilon^k(y) / \sigma(y)$ and
	\[ \Sigma(n, \varepsilon) = \left\lvert \frac{1}{K(n^2 t)^{1/2}} \sum_{k=1}^{K(n^2 t)} \overline \Upsilon^k(\varepsilon n) \right\rvert. \]
	
	Then the triangular inequality gives
	\[ \left\lvert \sum_{k=1}^{K(n^2 t)} \left( E^k - \frac{1}{\varepsilon n} V^k(\varepsilon n) \right) \right\rvert \leq \sigma(\varepsilon n) K(n^2 t)^{1/2} \Sigma(n, \varepsilon) + \left \lvert m(\varepsilon n) \right \rvert K(n^2 t) \]
	and so
	\begin{multline} \label{eq:tmp}
		\P \left( \left\lvert \sum_{k=1}^{K(n^2 t)} \left( E^k - \frac{1}{\varepsilon n} V^k(\varepsilon n) \right) \right\rvert \geq \eta n \right) \leq \P \left( K(n^2 t) \geq \frac{\eta n}{2 \left \lvert m(\varepsilon n) \right \rvert} \right)\\
		+ \P \left( \Sigma(n, \varepsilon) \geq \frac{\eta n}{2 \sigma(\varepsilon n) K(n^2 t)^{1/2}} \right).
	\end{multline}
	
	Let $\overline C = \sup_{y \geq 0} (y^{-1/2} \E(K(y)))$, which has been showed in the proof of Lemma~\ref{lemma:control-mean-local-time}, to be finite. Using Markov's inequality, the first term of the above upper bound is thus upper bounded by
	\begin{equation} \label{eq:K}
		\P \left( K(n^2 t) \geq \frac{\eta n}{2 \left \lvert m(\varepsilon n) \right \rvert} \right) \leq (2 / \eta) \overline C t^{1/2} \times \left \lvert m(\varepsilon n) \right \rvert.
	\end{equation}
	
	By definition we have for $y > 0$
	\[ m(y) = \frac{1}{\lambda \P(J=1)} - \frac{1}{\lambda y} \E \left( \sum_{v \in B(\Tree_1)} \indicator{\psi(v, \Tree_1) \leq y} \right) \]
	and so~\eqref{eq:m} implies that $m(y) \to 0$. In view of~\eqref{eq:K}, the first term in the right-hand side of~\eqref{eq:tmp} therefore vanishes as $n \to +\infty$. We now control the second term: let $C^* = \sup_{y \geq 0} (\sigma(y)/y^{1/2})$, which is proved to be finite in Section~\ref{sub:various}, so that
	\begin{align*}
		\P \left( \Sigma(n, \varepsilon) \geq \frac{\eta n}{2 \sigma(\varepsilon n) K(n^2 t)^{1/2}} \right) & \leq \P \left( \Sigma(n, \varepsilon) \geq \frac{\eta n^{1/2}}{2 \varepsilon^{1/2} C^* K(n^2 t)^{1/2}} \right)\\
		& \leq \P \left( \Sigma(n, \varepsilon) \geq \frac{\eta}{2 \varepsilon^{1/2} C^* \bar K} \right) + \frac{\overline C t^{1/2}}{\bar K^2}
	\end{align*}
	where the second inequality, valid for any $\bar K$, is obtained by considering the two events $\{n / K(n^2t) \geq 1/\bar K^2\}$ and $\{n / K(n^2t) \leq 1/\bar K^2\}$ and using the Markov inequality in the second case. Since the $(\overline Y^k(\varepsilon n), k \geq 1)$ are i.i.d.\ centered random variables with unit variance, the central limit theorem gives
	\[ \limsup_{n \to +\infty} \P \left( \Sigma(n, \varepsilon) \geq \frac{\eta}{2 \varepsilon^{1/2} C^* \bar K} \right) \mathop{\longrightarrow}_{\varepsilon \to 0} 0. \]
	
	Thus letting first $n \to +\infty$, then $\varepsilon \to 0$ and finally $\bar K \to +\infty$ achieves the proof.
\end{proof}

\subsection{Last step} \label{step:last}

At this point, we know that, under $\Pbf$:
\begin{enumerate}
	\item \label{prop:1} $\pi \circ X$ is equal in distribution to $(2c)^{1/2} W$ (by the fifth step);
	\item $M \circ X$ is equal in distribution to $(2 \lambda)^{1/2} W$ (by Lemmas~\ref{lemma:preliminary} and~\ref{lemma:joint-convergence});
	\item \label{prop:3} $\Lcal(\pi \circ X) = \Lcal(\E(J) M \circ X)$ (by Lemmas~\ref{lemma:joint-convergence} and~\ref{lemma:asymptotic-local-time});
\end{enumerate}

These three properties have the following consequence.

\begin{lemma}\label{lemma:price=mass}
	Under $\Pbf$, for every $t \geq 0$ we have $M(X_t) = \pi(X_t) / \E(J)$.
\end{lemma}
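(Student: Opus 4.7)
The plan is to prove that $N_t := \pi(X_t) - \E(J)\,M(X_t)$ vanishes identically under $\Pbf$, which is equivalent to the stated identity.

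First, I would apply the Tanaka decomposition (rescaled from~\eqref{eq:canonical-decomposition-W}) to the two reflected Brownian motions furnished by properties~$(1)$ and~$(2)$. Writing $c_\pi := \alpha\E(J)$ and $c_M := \alpha$,
\[
\pi(X_t) = \tfrac{c_\pi^{2}}{2}\Lcal(\pi\circ X)_t + B^{\pi}_t, \qquad M(X_t) = \tfrac{c_M^{2}}{2}\Lcal(M\circ X)_t + B^{M}_t,
\]
where $B^{\pi}, B^{M}$ are continuous local martingales with $\langle B^{\pi}\rangle_t = c_\pi^{2} t$ and $\langle B^{M}\rangle_t = c_M^{2} t$. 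Substituting property~$(3)$ in the form $\Lcal(M\circ X) = \E(J)\,\Lcal(\pi\circ X)$ and using the identity $c_\pi^{2} = \E(J)^{2} c_M^{2}$, the bounded-variation parts of $\pi\circ X$ and $\E(J)\,M\circ X$ cancel exactly, so
\[
N_t = B^{\pi}_t - \E(J)\,B^{M}_t
\]
is a continuous local martingale with $N_0 = 0$. The pathwise bound $0\le \pi\le M$ from Lemma~\ref{lemma:joint-convergence} yields $|N_t|\le M(X_t)$, and since $\E[M(X_t)^{2}] = c_M^{2} t < \infty$, $N$ is a true square-integrable martingale with $\E[N_t^{2}] = \E[\langle N\rangle_t]$.

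Second, I would establish that the zero sets of $\pi\circ X$ and $M\circ X$ coincide $\Pbf$-almost surely. The inclusion $\{M\circ X = 0\}\subseteq\{\pi\circ X = 0\}$ is immediate from $\pi\le M$; for the reverse, the positive proportionality $d\Lcal(M\circ X) = \E(J)\,d\Lcal(\pi\circ X)$ forces the supports of these two positive measures---which are $\{M\circ X = 0\}$ and $\{\pi\circ X = 0\}$ respectively---to coincide.

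Third, the coincidence of the zero sets allows one to eliminate the local-time terms in It\^o's product formula applied to $\pi(X_t)\,M(X_t)$: both Stieltjes integrals $\int_0^{t}\pi\,d\Lcal(M\circ X)$ and $\int_0^{t} M\,d\Lcal(\pi\circ X)$ vanish (the integrand being zero on the support of the integrator), so after localization using $\pi M\le M^{2}$,
\[
\E[\pi(X_t)\,M(X_t)] = \E[\langle \pi\circ X, M\circ X\rangle_t].
\]
Expanding $\E[N_t^{2}]$ and using the marginal identities $\E[\pi^{2}] = c_\pi^{2} t$ and $\E[M^{2}] = c_M^{2} t$ reduces the claim $N\equiv 0$ to the single equality $\E[\pi(X_t)\,M(X_t)] = c_\pi c_M\,t$, i.e., the case of equality in the Kunita--Watanabe and Cauchy--Schwarz inequalities (which both give the upper bound $c_\pi c_M t$).

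The main obstacle is precisely this Cauchy--Schwarz equality, which is essentially tautological to the claim itself. The plan is to break this circularity by exploiting the shared Skorokhod structure of $\pi\circ X$ and $\E(J)\,M\circ X$: both are reflected Brownian motions driven by continuous local martingales of the same variance rate $c_\pi^{2}$ and sharing the same reflecting process $\tfrac{c_\pi^{2}}{2}\Lcal(\pi\circ X)$. On each excursion interval $[g,d]$ of $\pi\circ X$ (equivalently, of $M\circ X$, by the second step), the martingale $N$ begins and ends at zero; combined with the pathwise bound $-\E(J) M\le N\le (1-\E(J)) M$, which also vanishes at the endpoints, a martingale argument localized to the excursion will force $N\equiv 0$ on $[g,d]$, and hence globally. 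Once $N\equiv 0$ is established, $M(X_t) = \pi(X_t)/\E(J)$ holds for every $t\ge 0$, which proves the lemma.
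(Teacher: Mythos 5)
Your first three steps correctly reduce the claim to showing that the $L^2$-martingale $N = \pi\circ X - \E(J)\,M\circ X$ vanishes, and you rightly diagnose that the expectation computation $\E[\pi_t M_t] = \E[\langle B^\pi, B^M\rangle_t]$ runs into a Cauchy--Schwarz equality that is tautological to the claim. The problem is that your Step~4 does not escape this circularity. The proposed ``martingale argument localized to the excursion'' would not force $N\equiv 0$ on $[g,d]$: first, the left endpoint $g$ of an excursion of $\pi\circ X$ is not a stopping time (it requires looking forward to know the excursion has begun), so optional sampling cannot be applied from $g$; and second, even granting some bilateral stopping structure, a continuous martingale that starts and ends at zero on an interval need not vanish there --- your own observation that the pathwise bounds $-\E(J)M\le N\le(1-\E(J))M$ vanish only at the endpoints highlights that the interior is unconstrained. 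Nothing in the excursion picture you describe rules out $N$ wandering off and returning; you have re-stated the difficulty rather than resolved it.

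The paper takes a genuinely different route to get past this obstruction. Writing $\pi = c\Lcal(\pi) + \bar\pi$ and $M = (c/\E(J))\Lcal(\pi) + \bar M$, it considers the hitting time $T_a = \inf\{s: \pi(X_s)\ge a\}$. Optional sampling at $T_a\wedge t$ gives $\Ebf[M_{T_a\wedge t}] = \E(J)^{-1}\Ebf[\pi_{T_a\wedge t}]$, and a second-moment computation shows $\Ebf[M_{T_a\wedge t}^2] = \E(J)^{-2}\Ebf[\pi_{T_a\wedge t}^2]\le a^2/\E(J)^2$. Letting $t\to\infty$ and combining these yields $\Ebf[M_{T_a}] = a/\E(J)$ together with $\Ebf[M_{T_a}^2]\le (a/\E(J))^2$, i.e.\ $\textnormal{Var}(M_{T_a})\le 0$, hence $M_{T_a} = a/\E(J)$ almost surely. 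The crucial step is that $\pi_{T_a}=a$ is deterministic by construction, and the variance bound then forces $M_{T_a}$ to be deterministic as well; conditioning back on $\Fcal_{t\wedge T_a}$ shows that $\bar\pi_{T_a\wedge t}$ and $\E(J)\bar M_{T_a\wedge t}$ are both equal to $a - c\,\Ebf[\Lcal(\pi)_{T_a}\mid\Fcal_{t\wedge T_a}]$, giving the pathwise identity on $[0,T_a]$ and then, letting $a\to\infty$, globally. This hitting-time argument is what supplies the information your Step~4 is missing; without some analogue of it (or another device that produces a deterministic simultaneous value for $\pi$ and $M$), the circularity you identified remains unresolved.
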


Before proving this lemma, let us quickly conclude the proof of Theorem~\ref{thm:main}. Fix some $t, y \geq 0$: we have to prove~\eqref{eq:limit}. If $\pi(X_t) = 0$, then $M(X_t) = 0$ by Lemma~\ref{lemma:price=mass} and~\eqref{eq:limit} holds. Otherwise, assume first that $y < \pi(X_t)$ and let $g$ be the left endpoint of the excursion of $\pi \circ X$ above $y$ straddling $t$. Then according to Corollary~\ref{cor:local-evolution}, we have $X_t([0,y]) = X_g([0,y])$. On the other hand, we have $y = \pi(X_g)$ by definition of $g$ and so $X_g([0,y]) = M(X_g)$ which is equal to $\pi(X_g) / \E(J) = y / \E(J)$ by Lemma~\ref{lemma:price=mass}. This proves that $X_t([0,y]) = y / \E(J)$ for $y < \pi(X_t)$, and since $X_t([0,y]) = M(X_t)$ for $y \geq \pi(X_t)$, which is equal to $\E(J)^{-1} \pi(X_t)$ by Lemma~\ref{lemma:price=mass}, this proves~\eqref{eq:limit} and concludes the proof of Theorem~\ref{thm:main}.

\begin{proof} [Proof of Lemma~\ref{lemma:price=mass}]
	Thanks to~\eqref{eq:canonical-decomposition-W} we can write $\pi = c \Lcal(\pi) + \bar \pi$ and $M = \lambda \Lcal(M) + \bar M$, where $(2c)^{-1/2} \bar \pi$ and $(2\lambda)^{-1/2} \bar M$ are two standard Brownian motions, and in the rest of the proof we write in order to ease the notation $\pi$ and $M$ for $\pi \circ X$ and $M \circ X$, respectively. Moreover, $\Lcal(\pi)$ is on the one hand equal in distribution to $\Lcal((c/\lambda)^{1/2} M)$ because $\pi$ is equal in distribution to $(c/\lambda)^{1/2} M$, while on the other hand we have $\Lcal(\pi) = \Lcal(\E(J) M)$. This shows that $c = \lambda \E(J)^2$ and in particular, we have $M = (c/\E(J)) \Lcal(\pi) + \bar M$.

	Fix some $a, t\ge 0$: then we can apply the optional sampling theorem (as in, e.g.,~\cite[Problem 1.3.23a]{Karatzas91:0}) for the bounded stopping time $T_a\wedge t$ and the martingales $\bar M$ and $\bar \pi$, and derive
	\begin{equation} \label{first_moment}
		\Ebf(M_{T_a\wedge t}) = (c/\E(J)) \Ebf\big(\Lcal(\pi)_{T_a\wedge t}\big) = (1/\E(J)) \Ebf(\pi_{T_a\wedge t}).
	\end{equation}

	Since $(\bar M^2 - 2 \lambda t, t \geq 0)$ and $(\bar \pi^2 - 2 c t, t \geq 0)$ are also martingales, another application of the optional sampling theorem implies
	\begin{equation} \label{second_moment}
		\Ebf\big(M_{T_a \wedge t}^2\big) = 2\lambda \Ebf(T_a\wedge t) = \frac{2\lambda}{2c} \Ebf\big(\pi_{T_a \wedge t}^2\big) = \frac{1}{\E(J)^2} \Ebf\big(\pi_{T_a \wedge t}^2\big) \le \frac{a^2}{\E(J)^2}.
	\end{equation}

	The stopped process $(M_{T_a\wedge t}, t \geq 0)$ is therefore uniformly integrable, and letting $t \to +\infty$ in~\eqref{first_moment}, we thus obtain $\Ebf(M_{T_a}) = \Ebf(\pi_{T_a}) / \E(J) = a / \E(J)$. On the other hand, letting $t \to +\infty$ in~\eqref{second_moment} and using Fatou's lemma, we obtain $\Ebf(M_{T_a}^2) \leq (a / \E(J))^2$ which implies that $M_{T_a} = a/\E(J)$.

	A calculation similar to~\eqref{second_moment} shows that the stopped Brownian motions $(\bar M_{T_a\wedge t}, t \geq 0)$ and $(\bar \pi_{T_a\wedge t}, t \geq 0)$ are uniformly integrable. Then we can apply another version of the optional sampling theorem, such as in~\cite[Problem 1.3.19 and Theorem 1.3.22]{Karatzas91:0}, and get
	\[ \Ebf\left(M_{T_a} \mid \Fcal_{t\wedge T_a}\right) = (c/\E(J)) \Ebf\big(\Lcal(\pi)_{T_a} \mid \Fcal_{t\wedge T_a} \big) +\bar M_{T_a \wedge t}. \]

	Since we have proved that $M_{T_a} = a/\E(J)$, the last display leads to
	\[ a - c \Ebf(\Lcal(\pi)_{T_a} \mid \Fcal_{t\wedge T_a}) = \E(J) \bar M_{T_a \wedge t}. \]

	The exact same reasoning shows that the left-hand side of the above display is also equal to $\bar\pi_{T_a \wedge t}$, and so $M_{t\wedge T_a}= \pi_{t\wedge T_a} / \E(J)$. Letting $a \to +\infty$ achieves the proof.
\end{proof}

\section{Discussion} \label{sec:discussion}

The main purpose of this paper is to exploit the connection between the regenerative characterization of L\'evy trees of~\cite{Weill07:0} and the present model of the limit order book. The assumptions made on $J$ in Theorem~\ref{thm:main} correspond to the simplest interesting case where this connection can be exploited, but this result should hold under more general assumptions on $J$ and $\lambda$. For instance, our arguments should readily extend to a triangular scheme where the rates at which orders are added to and removed from the book may be different, and the model's parameters depend on $n$ in a suitable way. We believe that the results of Theorem~\ref{thm:main} would still hold, with the limiting price process being a Brownian motion \emph{with drift} reflected at $0$.

A more delicate generalization consists in relaxing the assumption that $J \in \{-j^*, \ldots, 1\}$. The proof of most results goes through in this more general case, but the main problem is that for a general random variable $J$, the successive excursions above level $a$ are not i.i.d.\ anymore, which invalidates Lemma~\ref{lemma:discrete-reg}. However, the dependency between successive excursions lies in the overshoot of the price above $a$ at the beginning of each excursion above $a$, and so upon suitable moment assumptions on $J$ this dependency should be washed out in the limit. We believe that this generalization could be obtained with a suitable coupling with the case $J \in \{-j^*, \ldots, 1\}$ studied here.
\\

Further, different boundary conditions can also be considered. In~\cite{Simatos14:1} and in the additive version of~\cite{Lakner16:0} for instance, orders can be placed in the negative half-line. In~\cite{Lakner16:0} there is the additional constraint that the number of orders cannot fall below some level, say $\varepsilon n$. This is meant to model the presence of a \emph{market maker}.

In the presence of such a market maker, Theorem~\ref{thm:main} remains valid and the proofs go through. Indeed, imagine $\varepsilon n$ orders initially sit at $0$. Since these orders can only be displaced when the price is at $0$ and, while the price is at $0$, the number of orders evolves according to a critical random walk, the price process needs to accumulate of the order of $n^2$ units of local time at $0$ in order to go through this initial stack of orders. Lemma~\ref{lemma:preliminary} shows that this takes of the order of $n^4$ units of time, and so on the time scale that we are interested in, this does not happen. Pushing this reasoning a bit further actually shows that Theorem~\ref{thm:main} should remain valid as long as the initial number of orders, say $m_n$, grows to $+\infty$. Indeed, in this case after accumulating $m_n^2$ units of local time at $0$ these orders will have only moved by a constant distance, and it would thus take $n m_n^2$ units of local time at $0$, which take about $n^2 m_n^4 \gg n^2$ units of normal time to accumulate, to have them moved by a distance of the order of $n$.

On the other hand, when orders can be placed on the negative half-line and there is no market maker, then we conjecture that the price process should converge to a Brownian motion (without reflection), say $\tilde W$, and the measure-valued process should converge to the process having constant density $1/\E(J)$ with respect to Lebesgue measure restricted to $[I_t, \tilde W_t]$ with $I_t = \inf_{[0,t]} \tilde W$. The key observation is indeed that if, in this ``free'' case, one reflects the measure-valued process by considering $I^\pi$, the past infimum of the price process, as the origin of space and collapsing all the orders below $I^\pi$ at $I^\pi$, then one precisely gets the model studied here. Thus the only thing left to prove would be that $I^\pi$ converges to the local time at $0$ of the reflected price process.
\\

Let us finally mention that we have focused here on the case $\E(J) > 0$. When $\E(J) < 0$, under minor moment assumptions on $J$ the probability $\P(\psi^*(B(\Tree_1)) > u)$ decays \emph{exponentially fast}, since for this to happen one needs the supremum of a random walk with a negative drift to be large (see for instance Theorem~$2$ in~\cite{addario-berry11:0}). This is in sharp contrast with the polynomial decay proved in Lemma~\ref{lemma:tail-psi} when $\E(J) > 0$, and it implies, when $\E(J) < 0$, that $\pi \circ X$ under $\P^n$ converges weakly to $0$ (since one would need to see an exponential number of excursions before seeing a macroscopic one). Note that the case $\E(J) < 0$ with a different boundary condition (see discussion above) has been studied in~\cite{Lakner16:0} via a completely different approach. The fact that $\pi \circ X$ under $\P^n$ converges to $0$ means, in terms of the free process studied in~\cite{Lakner16:0}, that the limiting price process is increasing (see Proposition $9.12$ there).

To conclude, we note that the case $\E(J) = 0$, which is in some sense the true critical case where both the offspring and displacement distributions of $\Tree_1$ are critical, remains open.

\appendix

\section{Results on a branching random walk with a barrier} \label{appendix}

We prove in this section the various results on $B(\Tree_1)$ that have been used in the proof of Theorem~\ref{thm:main}. These results may also be of independent interest, see for instance~\cite{Durrett91:0} and~\cite{Kesten94:0} where closely related results are proved for $\Tree_1$. Note that we consider the case of a geometric offspring distribution, but the arguments above actually work for any offspring distribution with finite exponential moments. With more care, they could probably be extended to a more general setting.

\subsection{Preliminary results}

Let in the sequel $Z_m = \sum_{v \in \Tree_1} \indicator{\lvert v \rvert = m}$ for $m \geq 1$ be the number of nodes at depth $m$ in $\Tree_1$, so that $(Z_m, m \geq 1)$ is a Galton--Watson branching process with geometric offspring distribution with parameter $1/2$, $h(\Tree_1)$ is its extinction time and $\lvert \Tree_1 \rvert$ is its total progeny. By induction one easily obtains
\begin{equation} \label{eq:variance-Z}
	\E \left[ (Z_m-1)^2 \right] = 2m, \ m \geq 1.
\end{equation}

Moreover, it is well known that there exists a finite constant $C_S > 0$ such that
\begin{equation} \label{eq:tail-behavior-size+height}
	\P \left( \lvert \Tree_1 \rvert \geq u \right) \mathop{\sim}_{u \to +\infty} C_S / u^{1/2} \ \text{ and } \ P \left( h(\Tree_1) \geq u \right) \mathop{\sim}_{u \to +\infty} 1 / u,
\end{equation}
see for instance~\cite[Theorem~$23$]{Aldous93:0} where these estimates are established for any finite variance Galton--Watson process. Most of the times upper and lower bounds will be enough, and we will for instance often write
\[ 1 / (C u^{1/2}) \leq \P \left( \lvert \Tree_1 \rvert \geq u \right) \leq C / u^{1/2} \ \text{ and } \ 1 / (Cu) \leq P \left( h(\Tree_1) \geq u \right) \leq C / u. \]

We will also need the existence of a finite constant $C > 0$ such that for every $u, m \geq 1$,
\begin{equation} \label{eq:GW}
	\E \left( Z_m \mid \lvert \Tree_1 \rvert > u \right) \leq C m \ \text{ and } \ \E \left( Z_m \mid h(\Tree_1) > u \right) \leq C m.
\end{equation}

The first bound can be found in, e.g.,~\cite[Theorem~$1.13$]{Janson06:1}, where it is proved for any finite variance Galton--Watson process. The second bound is very natural in view of the first one, since the trees conditioned on having a large size or a large height are known to have the same scaling limits, but we could not find a precise reference for it and we therefore provide a proof. The following proof is due to Igor Kortchemski, to whom we are grateful for sharing it with us.

\begin{proof}[Proof of the second bound in~\eqref{eq:GW}]
	Since $\P(h(\Tree_1) > u) \geq C/u$, we have
	\[ \E(Z_m \mid h(\Tree_1) > u) \leq Cu \E(Z_m ; h(\Tree_1) > u). \]
	
	If $u \leq 2m$, then we simply use $\E(Z_m ; h(\Tree_1) > u) \leq \E(Z_m) = 1$ to get $\E(Z_m \mid h(\Tree_1) > u) \leq Cu \leq 2 Cm$. Assume now that $u \geq 2m$. Then given $Z_m = z$, in order to have $h(\Tree_1) > u$ at least one of the $z$ subtrees rooted at depth $m$ must have height $> u-m$. Using the branching property we thus get
	\[ \E(Z_m ; h(\Tree_1) > u) = \E \left[ Z_m \left(1 - q(u-m)^{Z_m} \right) \right] \]
	with $q(x) = 1 - \P(h(\Tree_1) > x)$. Using $\E(Z_m) = 1$ we obtain
	\[ \E(Z_m ; h(\Tree_1) > u) = 1 - \E \left[ Z_m q(u-m)^{Z_m} \right] \]
	and since $q(x) \geq 1 - C/x$, this yields
	\begin{align*}
		\E(Z_m ; h(\Tree_1) > u) & \leq 1 - \E \left[ Z_m \left(1 - \frac{C}{u-m} \right)^{Z_m} \right]\\
		& \leq 1 - \E \left[ Z_m \left(1 - \frac{CZ_m}{u-m} \right) \right]\\
		& = \frac{C}{u-m} \E \left(Z^2_m\right),
	\end{align*}
	where we have used $(1-x)^z \geq 1 - zx$ for the second inequality. Since $\E(Z^2_m) \leq Cm$ by~\eqref{eq:variance-Z}, we finally get
	\[ \E(Z_m \mid h(\Tree_1) > u) \leq \frac{Cum}{u-m} \leq Cm, \]
	using for the second inequality $u/(u-m) \leq 2$ when $u \geq 2m$. The proof is complete.
\end{proof}

Let in the rest of this section $S = (S_m, m \geq 0)$ be a random walk started at $0$ and with step distribution $J$, independent from $\Tree_1$, and $\underline S_m = \min_{0 \leq k \leq m} S_k$.

\begin{lemma}\label{lemma:number-killed}
	We have $\E(\lvert \Kcal(\Tree_1) \rvert^2) < +\infty$ and
	\begin{equation} \label{eq:mean-killed}
		\E \left( \lvert \Kcal(\Tree_1) \rvert \right) = 1 - \frac{\E(J)}{\P(J=1)}.
	\end{equation}
\end{lemma}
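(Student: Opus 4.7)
The first step is to reduce the first-moment claim to a random-walk identity via the \emph{many-to-one formula}. Since $\Tree_1$ is critical with $\E(Z_m) = 1$ for every $m \geq 1$, and since a node $v$ at depth $m$ lies in $\Kcal(\Tree_1)$ precisely when the labels along its ancestor path correspond to the walk event $\{S_0, \ldots, S_{m-2} \geq 0,\ S_{m-1} \leq -1\}$ (with $\psi(v_k) = 1 + S_{k-1}$ and $S_n = J_1 + \cdots + J_n$), we obtain
\[ \E(|\Kcal(\Tree_1)|) = \sum_{m \geq 2} \P\bigl(S_0, \ldots, S_{m-2} \geq 0,\ S_{m-1} \leq -1\bigr) = \P(T < \infty), \]
where $T := \inf\{n \geq 1 : S_n < 0\}$. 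It remains to prove $\P(T = \infty) = \E(J)/\P(J = 1)$.

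Two structural features of $J$ drive the argument. First, since $J \leq 1$ is integer-valued, the strict ascending ladder epochs $\sigma_k := \inf\{n : S_n = k\}$ satisfy $S_{\sigma_k} = k$ a.s., so Wald gives $\E(\sigma_1) = 1/\E(J)$. Second, since $\E(J) > 0$ combined with $J \leq 1$ forces the walk to reach $+\infty$ by passing through every non-negative integer, the walk returns to $0$ almost surely from any negative level; conditioning on the first step then yields $\P_0(\tau_0 = \infty) = \P(J = 1) \cdot \P(T = \infty)$, where $\tau_0 := \inf\{n \geq 1 : S_n = 0\}$. Since $\nu := \#\{n \geq 0 : S_n = 0\}$ is geometric with mean $1/\P_0(\tau_0 = \infty)$, the claim reduces to showing $\E(\nu) = 1/\E(J)$.

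To establish this I would decompose $\sigma_N$ as a sum of occupation times over all levels: the strong Markov property at $\sigma_\ell$ shows that $\E(\#\{n < \sigma_N : S_n = \ell\}) = a_{N - \ell}$ for $0 \leq \ell \leq N - 1$, with $a_k := \E(\#\{n < \sigma_k : S_n = 0\})$, and an analogous decomposition handles the contribution of negative levels via $b_m := \E(\#\{n < \sigma_1 : S_n = -m\})$. A one-step argument gives $a_1 = 1/\P(J = 1)$, after which $\E(\sigma_1) = 1/\E(J)$ forces $\sum_{m \geq 1} b_m = 1/\E(J) - 1/\P(J = 1) < \infty$. Equating $\E(\sigma_N) = N/\E(J)$ with the occupation-time sum, dividing by $N$ and letting $N \to \infty$: the negative-level contribution vanishes by dominated convergence (using $\sum b_m < \infty$), while $a_k \uparrow \E(\nu)$ by monotone convergence; Ces\`aro then delivers $\E(\nu) = 1/\E(J)$, completing the first-moment identity.

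For $\E(|\Kcal(\Tree_1)|^2) < +\infty$, I would apply a many-to-two decomposition: every ordered pair of distinct killed vertices has a unique common ancestor $u$ with the two descendants lying in subtrees rooted at two distinct children of $u$. Using $\E(C(C-1)) = 2$ for the Geom$(1/2)$ offspring distribution and the conditional independence of the two subtrees given $\psi(u)$, one finds $\E(|\Kcal(\Tree_1)|^2) = \E(|\Kcal(\Tree_1)|) + 2\sum_{k \geq 1} H(k)$, with $H(k) := \E[h(1 + S_{k-1})^2 \indicator{S_0, \ldots, S_{k-1} \geq 0}]$ and $h(y) := \P(-\inf_n S_n \geq y)$. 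Using $h \leq 1$ and the Markov property at time $k - 1$ rewrites $\sum_k H(k) \leq \E(T\,;\, T < \infty)$, which is finite via an exponential tilt at the unique $\theta^* > 0$ solving $\E(e^{-\theta^* J}) = 1$ (in the nondegenerate case $\P(J < 0) > 0$; otherwise $|\Kcal(\Tree_1)| = 0$ trivially): the tilted walk has negative drift, so $T$ is integrable under the tilted measure, and the bound $\P(T = m) \leq e^{-\theta^*} \tilde \P(T = m)$ translates this back. The principal obstacle is the walk identification $\P_0(\tau_0 = \infty) = \E(J)$: the tree-to-walk reductions and the exponential-tilt estimate are standard, but this identity requires the renewal-Ces\`aro argument to be built carefully around the $J \leq 1$ structure.
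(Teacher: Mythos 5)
Your proof is correct, and structurally it follows the same route as the paper: many-to-one reduces the first moment to $\P(\underline{S}_\infty < 0)$, and a decomposition of ordered pairs of killed nodes by their most recent common ancestor reduces the second moment to a sum controlled by exponential decay of $\P(S_m < 0)$. The one genuine departure is in establishing the skip-free identity $\P(\underline{S}_\infty \geq 0) = \E(J)/\P(J=1)$: the paper simply invokes Corollary~1 of Brown, Pek\"oz and Ross, whereas you derive it from scratch via the occupation-time decomposition of $\sigma_N$ (showing $\E(\nu) = 1/\E(J)$ via Wald, Ces\`aro, and dominated convergence, then relating $\P_0(\tau_0 = \infty)$ to $\P(T = \infty)$ by a one-step argument). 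Your self-contained derivation is valid and somewhat instructive, but it is substantially longer than citing the reference; the trade-off is purely one of self-containedness. For the second moment, your reorganization of the MRCA computation around $h(y) = \P(\inf_n S_n \leq -y)$ and the ladder-time bound $\sum_k H(k) \leq \E(T; T<\infty)$ is arguably cleaner than the paper's direct handling of $g(m,s) = \P(\underline{S}_{m-1} \geq -s, S_m < -s)$, but both hinge on the same exponential tail estimate for $\P(S_m < 0)$ and the fact that $\E[C(C-1)] = 2$ for the Geom$(1/2)$ offspring law.
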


\begin{proof}
	To compute the mean number of killed nodes, we write
	\[ \lvert \Kcal(\Tree_1) \rvert = \sum_{m \geq 1} \sum_{v \in \Tree_1: \lvert v \rvert = m} f(v) \ \text{ with } \ f(v) = \indicator{\psi(v, \Tree_1) \leq 0, \psi(v_1, \Tree_1), \ldots, \psi(v_{m - 1}, \Tree_1) \geq 1}. \]
	
	Thus, taking the mean and using the fact that labels are independent from the genealogical structure, we obtain
	\[ \E \left( \lvert \Kcal(\Tree_1) \rvert \right) = \sum_{m \geq 1} \P \left( S_m < 0, \underline S_{m-1} \geq 0 \right) \E(Z_m). \]
	
	Since the genealogical structure $Z$ of $\Tree_1$ is a critical Galton--Watson process, we have $\E(Z_m) = 1$ which gives $\E(\lvert \Kcal(\Tree_1) \rvert) = \P(\underline S_\infty < 0)$. Since $J \in \left\{-j^*, -j^* + 1, \ldots, 0, 1\right\}$, $S$ is, in the terminology of~\cite{Brown10:0}, a skip-free (to the right) random walk with positive drift. In particular, Corollary~$1$ in this reference implies that $\P(\underline S_\infty \geq 0) = \E(J) / \P(J = 1)$ which gives~\eqref{eq:mean-killed}.
	
	As for the second moment, we define $v \wedge v'$ for $v, v' \in \Tree_1$ as the most recent common ancestor of $v$ and $v'$, and write $\lvert \Kcal(\Tree_1) \rvert^2 = \lvert \Kcal(\Tree_1) \rvert + \Sigma$, so that we only have to prove that $\E(\Sigma) < +\infty$, with
	\[ \Sigma = \sum_{\substack{M \geq 1 \\ m, m' \geq M }} \sum_{V: \lvert V \rvert = M} \sum_{\substack{v: \lvert v \rvert = m \\ v': \lvert v' \rvert = m'}} f(v) f(v') \indicator{v \wedge v' = V, v \not = v'}. \]
	
	Let $M \geq 1$, $m, m' \geq M$ and $V, v, v' \in \Tree_1$ with $\lvert V \rvert = M$, $\lvert v \rvert = m$, $\lvert v' \rvert = m'$, $v \wedge v' = V$ and $v \not = v'$. If $v$ is an ancestor of $v'$ (or the other way around), then $f(v) f(v') = 0$. Otherwise, $m, m' > M$ and the paths from the root to $v$ and $v'$ coincide on the first $M$ steps and are independent afterwards, on the $m-M$ and $m'-M$ remaining steps, respectively. Thus in this case, if $S'$ is an independent copy of $S$ we have
	\begin{align*}
		\E\left( f(v) f(v') \mid Z \right) & = \P \Big( \underline S_{m-1} \geq 0, S_m < 0,\\
		& \hspace{10mm} S_M +S'_1, \ldots, S_M + S'_{m'-M-1} \geq 0, S_M + S'_{m'-M} < 0 \Big)\\
		& = \E \left[ g(m-M, S_M) g(m'-M, S_M) ; \underline S_M \geq 0 \right]
	\end{align*}
	where $g(i,s) = \P \left( \underline S_{i-1} \geq -s, S_i < -s \right)$ for $i \geq 1$ and $s \in \N$. Defining $g(0,s) = 0$, we therefore get that $\E(\Sigma)$ is upper bounded by
	\[ \sum_{\substack{M \geq 1 \\ m, m' \geq M }} \E \left( g(m-M, S_M) g(m'-M, S_M) \right) \, \E \left( \sum_{V: \lvert V \rvert = M} \sum_{\substack{v: \lvert v \rvert = m\\v': \lvert v'\rvert = m'}} \indicator{v \wedge v' = V} \right). \]
	
	Since by the branching property, the subtrees rooted at nodes at depth $M$ in the tree are i.i.d., independent from the number $Z_M$ of nodes at depth $M$, and since further $\E(Z_M) = 1$, we have
	\[ \E \left( \sum_{V: \lvert V \rvert = M} \sum_{\substack{v: \lvert v \rvert = m\\v': \lvert v'\rvert = m'}} \indicator{v \wedge v' = V} \right) = \E \left( \sum_{\substack{v: \lvert v \rvert = m-M\\v': \lvert v'\rvert = m'-M}} \indicator{v \wedge v' = \emptyset} \right). \]
	
	To count the number of nodes at depths $m{-}M$ and $m'{-}M$ with most recent common ancestor the root, we can pick two distinct children of the root and then count the number of nodes at depth $m{-}M{-}1$ and $m'{-}M{-}1$ in each subtree, so that
	\[ \sum_{\substack{v: \lvert v \rvert = m-M\\v': \lvert v'\rvert = m'-M}} \indicator{v \wedge v' = \emptyset} = \sum_{u, u': \lvert u \rvert = \lvert u' \rvert = 1} Z(u, m-M-1) Z(u', m'-M-1) \]
	where $Z(w, i)$ is the number of nodes at depth $i$ in the subtree of $\Tree$ rooted at $w \in \Tree$. Thus taking the mean and noting that the number of distinct pairs of children of the root is equal in distribution to $Z_1(Z_1-1)$ which has mean $2$, we obtain
	\[ \E \left( \sum_{V: \lvert V \rvert = M} \sum_{\substack{v: \lvert v \rvert = m\\v': \lvert v'\rvert = m'}} \indicator{v \wedge v' = V, v \not = v'} \right) = 2 \E \left( Z_{m-M-1} Z_{m'-M-1} \right) = 2 \E(Z_{m^*}) \]
	with $m^* = \min(m-M,m'-M) + 1$. Using that $g(0,s) = 0$, upper bounding $m^*$ by $3((m-M) + (m'-M))$ when $m, m' > M$, changing variables in the sum and using~\eqref{eq:variance-Z}, we get
	\[ \E\left( \Sigma \right) \leq 3 \sum_{M, m, m' \geq 1} \E \left( g(m, S_M) g(m', S_M) \right) (m + m'). \]
	
	We have by definition $g(m,s) = \P(\underline S_{m-1} \geq -s, S_m < -s)$, so that for any $\kappa > 0$,
	\begin{equation} \label{eq:exp-decay}
		g(m,s) \leq \P \left( S_m < -s \right) \leq e^{-\kappa s} \left[ \E(e^{-\kappa J}) \right]^m \leq e^{-\kappa s} \left[ \E(e^{-2 \kappa J}) \right]^{m/2}.
	\end{equation}
	
	Since $\E(J) > 0$, we can choose $\kappa > 0$ such that $\beta = \E(e^{-2\kappa J}) < 1$, and so we get the bound
	\[ \E\left( \Sigma \right) \leq 3 \sum_{M, m, m' \geq 1} \E \left( e^{-2\kappa S_M} \beta^{(m+m')/2} \right) (m + m') = 3 \sum_{M \geq 1} \beta^M \sum_{m \geq 1} m^2 \beta^m. \]
	
	Since $\beta < 1$, these two sums are finite, which achieves to prove that $\lvert \Kcal(\Tree_1) \rvert$ has a finite second moment.
\end{proof}

\begin{lemma} \label{lemma:tail-behavior-h}
	As $u \to +\infty$, we have $u \P(h(B(\Tree_1)) \geq u) \to \E(J) / \P(J=1)$.
\end{lemma}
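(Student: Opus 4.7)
The plan is to express $\P(h(B(\Tree_1)) \geq u)$ in terms of the population, at depth $u-1$, of the ``non-killed'' subtree of $\Tree_1$ and then analyze this population by first- and second-moment methods. Write
\[
  \tilde{\Tree} := \bigl\{v \in \Tree_1 : \psi(v_i, \Tree_1) \geq 1 \text{ for } i = 1, \ldots, \lvert v \rvert\bigr\}, \qquad \tilde Z_m := \#\{v \in \tilde{\Tree} : \lvert v \rvert = m\},
\]
and set $q := \E(J)/\P(J=1)$, which equals $\P(\underline S_\infty \geq 0)$ by the computation in Lemma~\ref{lemma:number-killed}.

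A direct inspection of the definition of $B(\Tree_1)$ shows that $h(B(\Tree_1)) \geq u$ if and only if some node of $\tilde{\Tree}$ at depth $u-1$ has at least one child in $\Tree_1$. Since labels and the genealogical structure are independent and each node of $\Tree_1$ has an independent geometric$(1/2)$ number of children (hence $0$ children with probability $1/2$), conditioning on $\tilde Z_{u-1}$ yields the identity
\[
  \P(h(B(\Tree_1)) \geq u) \;=\; 1 - \E\!\left[ 2^{-\tilde Z_{u-1}} \right].
\]
Consequently, the lemma will follow once I establish the two claims
\[
  u\,\P(\tilde Z_{u-1} \geq 1) \longrightarrow q \qquad\text{and}\qquad \tilde Z_{u-1} \to +\infty \text{ in probability on } \{\tilde Z_{u-1} \geq 1\},
\]
since then $u\,\P(h(B(\Tree_1)) \geq u) = u\,\P(\tilde Z_{u-1} \geq 1)\bigl(1 - \E[2^{-\tilde Z_{u-1}}\mid \tilde Z_{u-1} \geq 1]\bigr) \longrightarrow q$ by dominated convergence.

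For the first moment, the many-to-one formula and $\E(Z_m) = 1$ give $\E[\tilde Z_m] = \P(\underline S_{m-1} \geq 0) \to q$. For the lower bound $\liminf_u u\,\P(\tilde Z_u \geq 1) \geq q$, I would use that, conditionally on $Z_u \geq 1$, the leftmost depth-$u$ node is well-defined and, since labels are independent of the tree structure, its root-to-leaf path carries $u-1$ i.i.d.\ $J$-distributed increments; hence
\[
  \P(\tilde Z_u \geq 1) \;\geq\; \P(Z_u \geq 1)\cdot \P(\underline S_{u-1} \geq 0) \;=\; \frac{\P(\underline S_{u-1} \geq 0)}{u+1},
\]
where I use the explicit identity $\P(Z_u \geq 1) = 1/(u+1)$ for the critical geometric$(1/2)$ Galton--Watson tree. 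For the divergence claim, I would compute $\E[\tilde Z_u^2]$ in the same spirit as the second-moment calculation in Lemma~\ref{lemma:number-killed}, by decomposing ordered pairs $(v,w)$ of depth-$u$ nodes of $\Tree_1$ according to the depth $k$ of their most recent common ancestor; using that the expected number of such ordered pairs equals $2$ (for each $k \in \{1, \ldots, u-1\}$) and that the two path segments past the split are independent, this gives
\[
  \E[\tilde Z_u^2] \;=\; \E[\tilde Z_u] + 2\sum_{k=1}^{u-1} \E\!\left[ \indicator{\underline S_{k-1} \geq 0}\, h_{u-k}(S_{k-1})^2 \right],
\]
with $h_m(s) := \P(s + S_1, \ldots, s + S_m \geq 0)$. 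Since $S_{k-1}$ drifts to $+\infty$ on $\{\underline S_{k-1} \geq 0\}$ and hence $h_{u-k}(S_{k-1}) \to 1$ on this event, each summand (for $k$ large) converges to $q$ and $\E[\tilde Z_u^2] \sim 2qu$; Paley--Zygmund then yields $\P(\tilde Z_u \geq 1) \geq q/(2u)$, and, combined with $\E[\tilde Z_u] \to q$, implies that $\tilde Z_u \to +\infty$ in probability on the survival event.

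The main obstacle is the matching upper bound $\limsup_u u\,\P(\tilde Z_u \geq 1) \leq q$, which Paley--Zygmund alone only gives up to a factor $2$. To obtain the sharp constant, my plan is to establish the Yaglom-type asymptotic $\E[\tilde Z_u \mid \tilde Z_u \geq 1] \sim u$, which together with $\E[\tilde Z_u] \to q$ yields $\P(\tilde Z_u \geq 1) \sim q/u$. This would proceed via a spine (size-biasing) decomposition: on the survival event, the root-to-leaf spine of $\tilde{\Tree}$ is distributed as a walk with step $J$ conditioned to stay non-negative (and hence drifting to $+\infty$), off which the size-biased geometric$(1/2)$ offspring attaches on average one critical Galton--Watson subtree per spine level. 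Since the spine drifts up, for typical spine levels $k$ the starting label is $\gg 0$ so that each attached subtree is essentially unbarriered and thus contributes $\sim 1$ non-killed depth-$u$ descendant in expectation, summing to $\sim u$ over the $u$ spine levels; this is the step requiring the most care.
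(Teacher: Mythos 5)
Your approach is genuinely different from the paper's, and it contains a real gap at its central step.

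\emph{What the paper does.} The paper never touches the survival probability of the killed branching random walk directly. Instead it writes $h(\Tree_1) = \max\bigl(h(B(\Tree_1)),\, \lvert v^B_k\rvert + h(\Tree^{(k)}) - 1 : k \leq \kappa\bigr)$, notes that conditionally on $B(\Tree_1)$ the subtrees $\Tree^{(k)}$ dangling from the killed nodes are i.i.d.\ copies of $\Tree_1$, and bootstraps off the \emph{known} asymptotic $\P(h(\Tree_1) \geq u) \sim 1/u$. With the uniform integrability of $uY(u)\indicator{h(B(\Tree_1)) < u}$ (proved via the second moment of $\kappa$), one reads off $u\P(h(B(\Tree_1)) \geq u) \to 1 - \E(\kappa)$, and~\eqref{eq:mean-killed} finishes. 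All the hard probability is outsourced to the classical Galton--Watson height estimate.

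\emph{What you do, and where it breaks.} Your identity $\P(h(B(\Tree_1)) \geq u) = 1 - \E\bigl[2^{-\tilde Z_{u-1}}\bigr]$ is correct and a nice observation. The first-moment computation $\E[\tilde Z_m] \to q$ and the Paley--Zygmund lower bound $\liminf u\,\P(\tilde Z_u \geq 1) \geq q/2$ are fine (modulo an off-by-one: $\P(Z_u \geq 1) = 1/u$ under the paper's convention $\lvert \emptyset\rvert = 1$). But the plan then hinges entirely on the matching upper bound $\limsup u\,\P(\tilde Z_u \geq 1) \leq q$, and the proposed route is circular: you write ``establish $\E[\tilde Z_u \mid \tilde Z_u \geq 1] \sim u$, which together with $\E[\tilde Z_u] \to q$ yields $\P(\tilde Z_u \geq 1) \sim q/u$'' --- but $\E[\tilde Z_u \mid \tilde Z_u \geq 1] = \E[\tilde Z_u]/\P(\tilde Z_u \geq 1)$, so that Yaglom-type statement is \emph{identically the same claim} as the survival asymptotic, not an independent lever. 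The spine sketch does not break the circle either: size-biasing by $\tilde Z_u$ produces a change of measure whose comparison to $\P(\,\cdot \mid \tilde Z_u \geq 1)$ again requires knowing $\P(\tilde Z_u \geq 1)$. The sketch also has an imprecision --- for geometric$(1/2)$ offspring the size-biased offspring has mean $\E[\xi^2]/\E[\xi] = 3$, so the off-spine count per level has mean $2$, not $1$ --- and the near-barrier portion of the conditioned spine walk, where the label is $O(1)$ and the attached subtrees are strongly killed, produces corrections that are not obviously lower order. Finally, the divergence claim ($\tilde Z_{u-1} \to +\infty$ in probability on survival) does not follow from your moment bounds alone: $\E[\tilde Z_u^2 \mid \tilde Z_u \geq 1] \lesssim u^2$ with conditional mean $\gtrsim$ const is perfectly consistent with $\tilde Z_u$ carrying a non-vanishing atom at a fixed finite value.

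In short, your route replaces a one-page bootstrap with the task of proving a sharp survival-probability/Yaglom asymptotic for a \emph{critical} branching random walk killed below a barrier --- a genuinely hard result in its own right, and one for which the spine argument is a heuristic rather than a proof as written. The paper's functional-equation approach is preferable precisely because it sidesteps this.
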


\begin{proof}
	Define for simplicity $\kappa = \lvert \Kcal(\Tree_1) \rvert$ and let $(v^B_k, k = 1, \ldots, \kappa)$ be the $\kappa$ killed nodes in $\Tree_1$, and $(\Tree^{(k)}, k = 1, \ldots, \kappa)$ be the subtrees attached to them. Then
	\[ h(\Tree_1) = \max \left( h(B(\Tree_1)), \lvert v^B_1 \rvert + h(\Tree^{(1)}) - 1, \ldots, \lvert v^B_\kappa \rvert + h(\Tree^{(\kappa)}) - 1 \right) \]
	so that
	\begin{multline*}
		\P \left( h(\Tree_1) \geq u \right) = \P \left( h(B(\Tree_1)) \geq u \right)\\
		+ \P \left( \exists k \in \{1, \ldots, \kappa\}: h(B(\Tree_1)) < u \text{ and } h(\Tree^{(k)}) \geq u + 1 - \lvert v^B_k \rvert\right).
	\end{multline*}

	Next, we observe that conditionally on $B(\Tree_1)$, the $(h(\Tree^{(k)}), k = 1, \ldots, \kappa)$ are i.i.d.\ with common distribution $h(\Tree_1)$. Defining $H(u) = \P(h(\Tree_1) \geq u)$, we thus obtain
	\begin{multline*}
		\P \left( h(\Tree^{(k)}) \geq u + 1 - \lvert v^B_k \rvert \text{ for some } k \in \{1, \ldots, \kappa\} \mid B(\Tree_1) \right)\\
		= 1 - \prod_{k=1}^\kappa \left( 1 - H(u+1-\lvert v^B_k\rvert) \right)
	\end{multline*}
	and consequently,
	\[ H(u) = \E \left( Y(u) ; h(B(\Tree_1)) < u \right) + \P \left( h(B(\Tree_1)) \geq u \right) \]
	with
	\[ Y(u) = 1 - \prod_{k=1}^\kappa \left( 1 - H(u+1-\lvert v^B_k\rvert) \right). \]
	
	It follows from~\eqref{eq:tail-behavior-size+height} that the random variable $u Y(u) \indicator{h(B(\Tree_1)) < u}$ converges almost surely as $u \to +\infty$ to $\kappa$. If we had uniform integrability, then we would obtain
	\[ u \P \left( h(B(\Tree_1)) \geq u \right) = u H(u) - \E \left( u Y(u) ; h(B(\Tree_1)) < u \right) \mathop{\longrightarrow}_{u \to +\infty} 1 - \E(\kappa) \]
	which would prove the result by~\eqref{eq:mean-killed}. Thus it remains to show that the family of random variables $(u Y(u) \indicator{h(B(\Tree_1)) < u}, u \geq 0)$ is uniformly integrable: it is enough to show that $\sup_{u \geq 1} \E(u^2 Y(u)^2)$ is finite. Let $V^B = \max_{k = 1, \ldots, \kappa} \lvert v^B_k \rvert$: since $Y(u) \leq 1$ and $Y$ is increasing in each $\lvert v^B_k \rvert$, we have
	\[ \E \left( Y(u)^2 \right) \leq \P \left( V^B \geq u/2 \right) + \E \left[ \left(1 - (1-H(u/2+1))^\kappa \right)^2 \right]. \]
	
	In the event $V^B \geq u/2$, we have $N \geq 1$ where $N$ is the number of nodes $v \in \Tree_1$ that satisfy $\lvert v \rvert \geq u/2$ and $\psi(v, \Tree_1) \leq 0$. Using Markov inequality, we therefore get
	\[ \P \left( V^B \geq u/2 \right) \leq \E \left( \sum_{v \in \Tree_1} \indicator{\lvert v \rvert \geq u/2, \psi(v, \Tree_1) \leq 0} \right) = \sum_{m \geq u/2} \P \left( S_m \leq 0 \right). \]
	
	Using $1 - (1-x)^y \leq xy$ for $y \geq 0$, we get on the other hand
	\[ \E \left[ \left(1 - (1-H(u/2+1))^\kappa \right)^2 \right] \leq H(u/2+1)^2 \E \left(\kappa^2 \right) \]
	so that finally,
	\[ u^2 \E \left( Y(u)^2 \right) \leq u^2 \sum_{m \geq u/2} \P \left( S_m \leq 0 \right) + \left( u \P \left( h(\Tree_1) \geq u/2+1 \right) \right)^2 \E \left(\kappa^2 \right). \]
	
	Since the probability $\P(S_m \leq 0)$ decays exponentially fast as $m \to +\infty$ by~\eqref{eq:exp-decay}, the first term of the above upper bound is bounded in $u$. The second term being also bounded in $u$ by~\eqref{eq:tail-behavior-size+height} and Lemma~\ref{lemma:number-killed}, the proof is complete.
\end{proof}

\begin{lemma} \label{lemma:tail-psi}
	As $u \to +\infty$, we have
	\[ u \P(\psi^*(B(\Tree_1)) \geq u) \to \frac{(\E(J))^2}{\P(J=1)}. \]
\end{lemma}

\begin{proof}
	Let
	\[ \overline \psi = \sup_{v \in \Tree_1} \lvert v \rvert^{-2/3} \left \lvert \psi(v, \Tree_1) - \lvert v \rvert \E(J) \right \rvert, \]
	so that for any $\varrho > 0$,
	\begin{multline} \label{eq:decomposition-psi}
		\P \left( \psi^*(B(\Tree_1)) \geq u, \overline \psi \leq u^\varrho \right) \leq \P \left( \psi^*(B(\Tree_1)) \geq u \right)\\
		\leq \P \left( \psi^*(B(\Tree_1)) \geq u, \overline \psi \leq u^\varrho \right) + u^{-12 \varrho} \E \left( \overline \psi^{12} \right).
	\end{multline}
	
	We show that $\E(\overline \psi^{12})$ is finite. By upper bounding the supremum by the sum, we get
	\begin{align*}
		\E \left( \overline \psi^{12} \right) & \leq \sum_{m \geq 1} \frac{1}{m^8} \E \left( \sum_{v \in \Tree_1, \lvert v \rvert = m} \left \lvert \psi(v, \Tree_1) - m \E(J) \right \rvert^{12} \right)\\
		& = \sum_{m \geq 1} \frac{1}{m^8} \E\left[ \left \lvert Y_1 + \cdots + Y_m \right \rvert^{12} \right]
	\end{align*}
	where $(Y_i)$ are i.i.d.\ centered random variables with distribution $J - \E(J)$ and where, in order to derive the last equality, we used the independence in $\Tree_1$ between the genealogical structure and the labels. The central limit theorem implies that $\lvert Y_1 + \cdots + Y_m \rvert / m^{1/2}$ converges weakly, and since the $Y_k$'s are bounded, all the moments of this random variable are bounded uniformly in $m$, so that by uniform integrability we can write $\E\left[\lvert Y_1 + \cdots + Y_m\rvert^{12} \right] \leq C m^6$ for all $m \geq 1$ and some finite constant $C$, independent from $m$. This gives $\E(\overline \psi^{12}) \leq C \sum_{m \geq 1} m^{-2}$ which is finite.
	\\
	
	We now derive an upper bound on the term $\P(\psi^*(B(\Tree_1)) \geq u, \overline \psi \leq u^\varrho)$ in~\eqref{eq:decomposition-psi}. In the event $\{ \psi^*(B(\Tree_1)) \geq u \}$, there exists $v^* \in B(\Tree_1)$ such that $u \leq \psi(v^*, \Tree_1) \leq \lvert v^* \rvert$. Moreover, by definition of $\overline \psi$ we have $\psi(v^*, \Tree_1) \leq \lvert v^* \rvert \E(J) + \lvert v^* \rvert^{2/3} \overline \psi$, and so when both events $\{ \psi^*(B(\Tree_1)) \geq u \}$ and $\{ \overline \psi \leq u^\varrho \}$ hold, there exists $v^* \in B(\Tree_1)$ such that
	\[ u \leq \lvert v^* \rvert \E(J) + \lvert v^* \rvert^{2/3} u^\varrho \leq \lvert v^* \rvert \E(J) + \lvert v^* \rvert^{\varrho + 2/3}, \]
	which can be rewritten as $u \leq \phi(\lvert v^* \rvert \E(J))$ where $\phi(x) = x + (x/\E(J))^{\varrho + 2/3}$ for $x \geq 0$. If $\phi^{-1}$ stands for its inverse, we therefore have
	\[ \P \left( \psi^*(B(\Tree_1)) \geq u, \overline \psi \leq u^\varrho \right) \leq \P \left( h(B(\Tree_1)) \geq \phi^{-1}(u) / \E(J) \right) \]
	so that plugging these inequalities in~\eqref{eq:decomposition-psi}, we obtain
	\[ u \P \left( \psi^*(B(\Tree_1)) \geq u \right) \leq \frac{C}{u^{12 \varrho - 1}} + u \P \left( h(B(\Tree_1)) \geq \phi^{-1}(u) / \E(J) \right). \]
	
	For $\varrho > 1/12$ the first term of the above upper bound vanishes, while for $\varrho < 1/3$ we have $\phi^{-1}(u) \sim u$ as $u \to +\infty$ and so the second one goes to $(\E(J))^2 / \P(J=1)$ by Lemma~\ref{lemma:tail-behavior-h}. Thus choosing $1/12 < \varrho < 1/3$ we obtain
	\[ \limsup_{u \to +\infty} u \P \left( \psi^*(B(\Tree_1)) \geq u \right) \leq \frac{(\E(J))^2}{\P(J=1)}. \]
	
	Starting from the lower bound in~\eqref{eq:decomposition-psi} a corresponding lower bound can be proved using the same arguments, which completes the proof.
\end{proof}

\subsection{Various results} \label{sub:various} We now provide the proof of the various results that have been used in the proof of Theorem~\ref{thm:main}.

\begin{proof}[Result needed in the proof of Lemma~\ref{lemma:control-mean-local-time}]
	To complete the proof of Lemma~\ref{lemma:control-mean-local-time}, we need to show that there exists a finite constant $C > 0$ such that for every $u > 0$,
	\begin{equation} \label{eq:estimate-ell}
		\P \left( \tau(\Tree_1) \geq u \right) \geq C u^{-1/2}.
	\end{equation}
	Indeed, we have
	\begin{align*}
		\P \left( \tau(\Tree_1) \geq u \right) & \geq \P \left( \lvert \Tree_1 \rvert \geq u, \Kcal(\Tree_1) = \emptyset \right)\\
		& = \P \left( \lvert \Tree_1 \rvert \geq u \right) - \P \left( \lvert \Tree_1 \rvert \geq u, \lvert \Kcal(\Tree_1) \rvert \geq 1 \right).
	\end{align*}
	We have $\P(\lvert \Tree_1 \rvert \geq u) \geq C u^{-1/2}$ by~\eqref{eq:tail-behavior-size+height}, while
	\[ \P \left( \lvert \Tree_1 \rvert \geq u, \lvert \Kcal(\Tree_1) \rvert \geq 1 \right) \leq \sqrt {\P \left( \lvert \Tree_1 \rvert \geq u \right) \E \left( \lvert \Kcal(\Tree_1) \rvert \right)} \leq C u^{-1/4}, \]
	where the first inequality comes from using first Cauchy-Schwarz inequality and then Markov inequality, and the second inequality comes from~\eqref{eq:tail-behavior-size+height} and the fact that $\E(\lvert \Kcal(\Tree_1) \rvert)$ is finite by Lemma~\ref{lemma:number-killed}. We thus get
	\[ \P \left( \tau(\Tree_1) \geq u \right) \geq C (u^{-1/2} - u^{-1/4}) \geq C u^{-1/2} \]
	which concludes the proof.
\end{proof}

\begin{proof}[Results needed in the proof of Lemmas~\ref{lemma:lebesgue-measure} and~\ref{lemma:asymptotic-local-time}]
	To complete the proof of Lemma~\ref{lemma:lebesgue-measure}, we need to show that there exists a finite constant $C > 0$ such that for every $p \geq 0$,
	\begin{equation} \label{eq:estimate-lebesgue-measure}
		\E \left( \sum_{v \in B(\Tree_1)} \indicator {\psi(v, \Tree_1) \leq p} \right) \leq C p,
	\end{equation}
	while in the proof of Lemma~\ref{lemma:asymptotic-local-time} we need to prove that
	\begin{equation} \label{eq:m}
		\frac{1}{y} \E \left( \sum_{v \in B(\Tree_1)} \indicator{\psi(v, \Tree_1) \leq y} \right) \mathop{\longrightarrow}_{y \to +\infty} \frac{1}{\P(J=1)}.
	\end{equation}
	
	Since~\eqref{eq:m} implies~\eqref{eq:estimate-lebesgue-measure} we prove~\eqref{eq:m}. We have
	\[ \E \left( \sum_{v \in B(\Tree_1)} \indicator{\psi(v, \Tree_1) \leq y} \right) = \sum_{m \geq 1} \P \left( S_m \leq y, \underline S_m \geq 0 \right). \]

	As $m \to +\infty$, $S_m / m$ conditionally on $\{\underline S_m \geq 0\}$ converges to $\E(J)$. One can therefore show that
	\[ \sum_{m \geq 1} \P \left( S_m \leq y, \underline S_m \geq 0 \right) = \sum_{m = 1}^{y / \E(J)} \P \left( \underline S_m \geq 0 \right) + o(y). \]

	Since $\P \left( \underline S_m \geq 0 \right) \to \P \left( \underline S_\infty \geq 0 \right) = \E(J) / \P(J=1)$, this gives
	\[ \sum_{m \geq 1} \P \left( S_m \leq y, \underline S_m \geq 0 \right) = \frac{y}{\E(J)} \P\left( \underline S_\infty \geq 0 \right) + o(y) = \frac{y}{\P(J=1)} + o(y) \]
	which proves~\eqref{eq:m}.
\end{proof}

\begin{proof}[Results needed in the proof of Lemma~\ref{lemma:D}]
	To complete the proof of Lemma~\ref{lemma:D}, we must show that there exists a finite constant $C > 0$ such that for every $p \geq 1$ and every $\kappa, u > 0$
	\begin{equation} \label{eq:estimate-D-1}
		\P \left( \sum_{v \in \Tree_1} \indicator{\psi(v, \Tree_1) \leq p} \geq \kappa \mid \tau(\Tree_1) > u \right) \leq \frac{C p^2}{\kappa}
	\end{equation}
	and
	\begin{equation} \label{eq:estimate-D-2}
		\P \left( \sum_{v \in \Tree_1} \indicator{\psi(v, \Tree_1) = p} \geq \kappa \mid \tau(\Tree_1) > u \right) \leq \frac{C p}{\kappa}.
	\end{equation}

	Note that~\eqref{eq:estimate-D-2} implies~\eqref{eq:estimate-D-1} by summation over $p$, so we only need prove~\eqref{eq:estimate-D-2}. Let $N_p = \sum_{v \in \Tree_1} \indicator{\psi(v, \Tree_1) = p}$: to control $\P(N_p \geq \kappa \mid \tau(\Tree_1) > u)$ we start by writing
	\[ \P \left( N_p \geq \kappa \mid \tau(\Tree_1) > u \right) = \frac{\P \left( N_p \geq \kappa, \tau(\Tree_1) > u \right)}{\P \left( \tau(\Tree_1) > u \right)} \leq \frac{C}{\kappa} \E \left( N_p \mid \lvert \Tree_1 \rvert > u/2 \right) \]
	using~\eqref{eq:estimate-ell}, $\tau(\Tree_1) \leq 2 \lvert \Tree_1 \rvert$ by~\eqref{eq:formula-tau}, Markov inequality and~\eqref{eq:tail-behavior-size+height} to derive the inequality. Conditioning on the genealogical structure leads as before to
	\[ \E \left( N_p \mid \lvert \Tree_1 \rvert > u/2 \right) = \sum_{m \geq 1} \E \left( Z_m \mid \lvert \Tree_1 \rvert > u/2 \right) \P \left( S_m = p \right) \]
	and combining the two previous displays with~\eqref{eq:GW}, we end up with
	\[ \P \left( N_p \geq \kappa \mid \tau(\Tree_1) > u \right) \leq \frac{C}{\kappa} \sum_{m \geq 1} m \P \left( S_m = p \right) = \frac{C}{\kappa} \E \left( \sum_{m \geq 1} m \indicator{S_m = p} \right). \]

	Since $S$ takes a geometric number of times the value $p$ at times around $p / \E(J)$, the term $\E(\sum_{m \geq 1} m \indicator{S_m = p})$ is of the order of $p$ when $p$ grows large, which concludes the proof.
\end{proof}

\begin{proof}[Results needed in the proof of Lemma~\ref{lemma:T}]
	To complete the proof of Lemma~\ref{lemma:T} we need to prove the two following results:
	\begin{equation} \label{eq:estimate-T-1}
		\limsup_{n \to +\infty} \P \left( \psi^*(B(\Tree_1)) < (\varepsilon^2 + \varepsilon) n \mid \psi^*(B(\Tree_1)) \geq \varepsilon n \right) \mathop{\longrightarrow}_{\varepsilon \to 0} 0
	\end{equation}
	and
	\begin{equation} \label{eq:estimate-T-2}
		\limsup_{n \to +\infty} \P \left( \lvert \Tree_1\rvert > \varepsilon n^2 \mid \psi^*(B(\Tree_1)) \geq \varepsilon n \right) \mathop{\longrightarrow}_{\varepsilon \to 0} 0.
	\end{equation}

	Note that~\eqref{eq:estimate-T-1} follows immediately from Lemma~\ref{lemma:tail-psi}. As for~\eqref{eq:estimate-T-2}, we have
	\begin{align*}
		\P \left( \lvert \Tree_1\rvert > \varepsilon n^2 \mid \psi^*(B(\Tree_1)) \geq \varepsilon n \right) & = \frac{\P \left( \lvert \Tree_1\rvert > \varepsilon n^2, \psi^*(B(\Tree_1)) \geq \varepsilon n \right)}{\P \left( \psi^*(B(\Tree_1)) \geq \varepsilon n \right)}\\
		& \leq C \varepsilon n \P \left( \lvert \Tree_1\rvert > \varepsilon n^2 \right)
	\end{align*}
	where the last inequality results from Lemma~\ref{lemma:tail-psi}. Invoking~\eqref{eq:tail-behavior-size+height} thus gives the desired~\eqref{eq:estimate-T-2}.
\end{proof}

\begin{proof}[Results needed in the proof of Lemma~\ref{lemma:g-d}]
	To complete the proof of Lemma~\ref{lemma:g-d}, we must show that there exists a finite constant $C > 0$ such that for every $u, \kappa > 0$ and $p \geq 1$,
	\begin{equation} \label{eq:estimate-g-d-1}
		\P \left( \sum_{v \in \Tree_1} \indicator{\psi(v, \Tree_1) \leq p} \geq \kappa \mid \psi^*(B(\Tree_1)) > u \right) \leq \frac{C p^2}{\kappa}
	\end{equation}
	and
	\begin{equation} \label{eq:estimate-g-d-2}
		\P \left( \sum_{v \in \Tree_1} \indicator{\psi(v, \Tree_1) = p} \geq \kappa \mid \psi^*(B(\Tree_1)) > u \right) \leq \frac{C p}{\kappa}.
	\end{equation}

	As for~\eqref{eq:estimate-D-1} and~\eqref{eq:estimate-D-2} we only need prove~\eqref{eq:estimate-g-d-2}: combining $\psi^*(B(\Tree_1)) \leq h(\Tree_1)$, Lemma~\ref{lemma:tail-psi} and~\eqref{eq:tail-behavior-size+height}, we obtain
	\begin{multline*}
		\P \left( \sum_{v \in \Tree_1} \indicator{\psi(v, \Tree_1) = p} \geq \kappa \mid \psi^*(B(\Tree_1)) > u \right)\\
		\leq C \P \left( \sum_{v \in \Tree_1} \indicator{\psi(v, \Tree_1) = p} \geq \kappa \mid h(\Tree_1) > u \right).
	\end{multline*}

	From there,~\eqref{eq:estimate-g-d-2} can be proved by repeating verbatim the proof of~\eqref{eq:estimate-D-2} with the following caveat: one needs to replace the conditioning on $\lvert \Tree_1 \rvert$ by a conditioning on $h(\Tree_1)$, and thus to use the second bound in~\eqref{eq:GW} instead of the first one.
\end{proof}

\begin{proof} [Result needed in the proof of Lemma~\ref{lemma:varepsilon}]
	To complete the proof of Lemma~\ref{lemma:varepsilon}, we need to prove that
	\begin{equation} \label{eq:kcal}
		\sup_{u \geq 0} \E \left( \lvert \Kcal(\Tree_1) \rvert \mid \psi^*(B(\Tree_1)) > u \right) < +\infty.
	\end{equation}
	
	The same arguments as in the previous proof apply and show that
	\[ \E \left( \lvert \Kcal(\Tree_1) \rvert \mid \psi^*(B(\Tree_1)) > u \right) \leq C \E \left( \lvert \Kcal(\Tree_1) \rvert \mid h(\Tree_1) > u \right). \]
	
	Conditioning on the genealogical structure $(Z_m, m \geq 1)$, we get
	\begin{align*}
		\E \left( \lvert \Kcal(\Tree_1) \rvert \mid h(\Tree_1) > u \right) & = \sum_{m \geq 1} \E(Z_m \mid h(\Tree_1) > u) \P \left( S_m < 0, \underline S_{m-1} \geq 0 \right)\\
		& \leq C \sum_{m \geq 1} m \P \left( S_m < 0 \right)
	\end{align*}
	using~\eqref{eq:GW} for the last inequality. Since $\P(S_m < 0)$ decays exponentially fast in $m$ by~\eqref{eq:exp-decay}, the sum $\sum_{m \geq 1} m \P(S_m < 0)$ is finite, which gives~\eqref{eq:kcal}.
\end{proof}

\begin{proof}[Result needed in the proof of Lemma~\ref{lemma:asymptotic-local-time}] \label{subsub:C^*}
	To complete the proof of Lemma~\ref{lemma:asymptotic-local-time} we need to prove that the constant $C^*$ defined there is finite. Let $N(y)$ be the number of nodes in $B(\Tree_1)$ with label $\leq y$: then going back to the definition of $C^*$, we see that we have to prove that $\sup_y (\Var(\Scal(N(y))) / y^3)$ is finite, where $\Var(Y)$ denotes the variance of a real valued random variable $Y$. Thanks to~\eqref{eq:bounds-exploration}, we only have to show that $\sup_y (\Var(N(y)) / y^3)$ is finite. Further, using the same estimates as in the proof of Lemma~\ref{lemma:tail-psi} we can show that $N(y)$ behaves like the number of nodes in $\Tree_1$ at depth $\leq y / \E(J)$, and in particular $\Var(N(y))$ is of the order of $\Var(Z_1 + \cdots + Z_{y/\E(J)})$. Thus in order to prove that $C^* < +\infty$, we only have to prove that $\Var(Z_1 + \cdots + Z_n)$ grows at most like $n^3$. Let
	\[ v_n = \E \left( (Z_{n+1} - 1) \sum_{k=1}^n (Z_k-1) \right). \]

	Then conditioning on $(Z_k, k \geq n)$, we obtain
	\[ v_n = \E \left( (Z_n - 1) \sum_{k=1}^n (Z_k-1) \right) \]
	and so~\eqref{eq:variance-Z} gives $v_n = 2n + v_{n-1}$. In particular, $v_n$ grows quadratically. On the other hand, we have
	\[ \Var(Z_1 + \cdots + Z_{n+1}) = 2(n+1) + 2 v_n + \Var(Z_1 + \cdots + Z_n), \]
	and since $v_n$ grows quadratically in $n$, this implies that $\Var(Z_1 + \cdots + Z_n)$ grows like $n^3$, which proves the result.
\end{proof}

\end{document}